  \theoremstyle{definition}
    \newtheorem{theorem}{Theorem}[section]
    \newtheorem{lemma}[theorem]{Lemma}
    \newtheorem{proposition}[theorem]{Proposition}
    \newtheorem{definition}[theorem]{Definition}
    \newtheorem{corollary}[theorem]{Corollary}
    \newtheorem{example}[theorem]{Example}
  \theoremstyle{plain}
\tikzset{modal/.style={>=stealth?,shorten >=1pt,shorten <=1pt,auto,node distance=1.5cm, semithick}, world/.style={circle,draw,minimum size=0.5cm,fill=gray!15}, point/.style={circle,draw,inner sep=0.5mm,fill=black}, reflexive above/.style={->,loop,looseness=7,in=120,out=60}, reflexive below/.style={->,loop,looseness=7,in=240,out=300}, reflexive left/.style={->,loop,looseness=7,in=150,out=210}, reflexive right/.style={->,loop,looseness=7,in=30,out=330}}
\newcommand{\AAL}{AAL}
\newcommand{\mo}[1]{\mathfrak{#1}}  
\newcommand{\lan}[1]{\mathbf{#1}}   
\renewcommand{\log}[1]{\mathsf{#1}}   
\newcommand{\alg}[1]{#1}   
\newcommand{\clalg}[1]{\mathcal{#1}}   
\newcommand{\excl}{\mathrel{%
  \hspace{.1ex}
  \begin{tikzpicture}[baseline=-.57ex, line width=.130ex]
    \draw[-] (-0.1ex,0) --(1.5ex,0);
    \draw[-, line width=.01ex, fill=black]
             (1.35ex,0) -- (1.84ex, .48ex)
                       -- (1.91ex ,.418ex)
                       -- (1.55ex,   0ex)
                       -- (1.91ex ,-.418ex)
                       -- (1.84ex,-.48ex)
                       -- (1.35ex,0ex);
  \end{tikzpicture}
\hspace{.1ex}}}
\newcommand{\wneg}[0]{{\sim}}
\newcommand{\DN}[0]{\neg\wneg}
\newcommand{\lanbi}{\lan{L}_{BI}}
\newcommand{\GHrule}[1]{(\text{#1})}
\newcommand{\wderiv}[2]{{#1}\vdash_{\log{w}}{#2}}
\newcommand{\nowderiv}[2]{{#1}\not\vdash_{\log{w}}{#2}}
\newcommand{\sderiv}[2]{{#1}\vdash_{\log{s}}{#2}}
\newcommand{\nosderiv}[2]{{#1}\not\vdash_{\log{s}}{#2}}
\newcommand{\ideriv}[2]{{#1}\vdash_{\log{i}}{#2}}
\newcommand{\noideriv}[2]{{#1}\not\vdash_{\log{i}}{#2}}
\newcommand{\wbil}[0]{\log{wBIL}}
\newcommand{\sbil}[0]{\log{sBIL}}
\newcommand{\biha}{\mathcal{BHA}}
\newcommand{\alambda}{\dot\lambda}
\renewcommand{\atop}{\dot\top}
\DeclareMathOperator{\abot}{\dot\bot}
\DeclareMathOperator{\aland}{\dot\land}
\DeclareMathOperator{\alor}{\dot\lor}
\DeclareMathOperator{\ato}{\dot\to}
\DeclareMathOperator{\aneg}{\dot\neg}
\DeclareMathOperator{\aexcl}{\dot\excl}
\DeclareMathOperator{\awneg}{\dot\wneg}
\DeclareMathOperator{\aDN}{\aneg\awneg}
\DeclareMathOperator{\aleftrightarrow}{\dot\leftrightarrow}
\newcommand{\eqprv}[1]{\llbracket #1 \rrbracket}
\newcommand{\eqcla}[2]{\eqprv{Form}^{\log{#1}}_{#2}}
\newcommand{\LT}[2]{\mathbb{LT}^{\log{#1}}_{#2}}
\newcommand{\pt}{\vdash_{pt}}
\newcommand{\pdt}{\vdash_{pdt}}
\newcommand{\form}{Form}
\newcommand{\val}{v}
\renewcommand{\int}[1]{\hat{#1}}
\newcommand{\cdash}[1][]{\mathrel{\kern.1ex\text{%
  \tikz[baseline=-.81ex, line width=.1ex, line cap=round, scale=1.1]
    {\draw (0ex,-.75ex) -- (0ex,.75ex);
     \draw (0ex,0ex) -- (1ex,0ex);
     \draw (-.4ex,0ex) arc(180:235:.91ex);
     \draw (-.4ex,0ex) arc(180:125:.91ex);}$_{#1}$}}\kern.1ex}
\newcommand{\locmodels}{\cdash[l]}
\newcommand{\globmodels}{\cdash[g]}
\newcommand{\nolocmodels}{\not\cdash[l]}
\newcommand{\noglobmodels}{\not\cdash[g]}
\providecommand*{\Dashv}{%
  \mathrel{%
    \mathpalette\@Dashv\vDash
  }%
}
\newcommand*{\@Dashv}[2]{%
  \reflectbox{$\m@th#1#2$}%
}
\newcommand{\zz}{\lessgtr}
\newcommand{\congr}{\approx}
\newcommand{\bisim}{\rightleftharpoons}
\renewcommand{\iff}{\quad\text{iff}\quad}
\renewcommand{\phi}{\varphi}
\DeclareMathOperator{\Prop}{Prop}
\newcommand{\N}{\mathbb{N}}
\DeclareMathOperator{\dto}{\mathrel{\kern1pt\dot\to\kern1pt}}
\DeclareMathOperator{\dlor}{\mathrel{\kern.5pt\dot\lor\kern.5pt}}
\DeclareMathOperator{\dland}{\mathrel{\kern.5pt\dot\land\kern.5pt}}
\DeclareMathOperator{\dleftrightarrow}{\mathrel{\kern1pt\dot\leftrightarrow\kern1pt}}
\newcommand\Item[1][]{%
  \ifx\relax#1\relax  \item \else \item[#1] \fi
  \abovedisplayskip=0pt\abovedisplayshortskip=0pt~\vspace*{-\baselineskip}}
\newcommand{\coqdoc}[1]{\href{\BaseUrl/#1}{\raisebox{-0.6mm}{\includegraphics[height=0.8em]{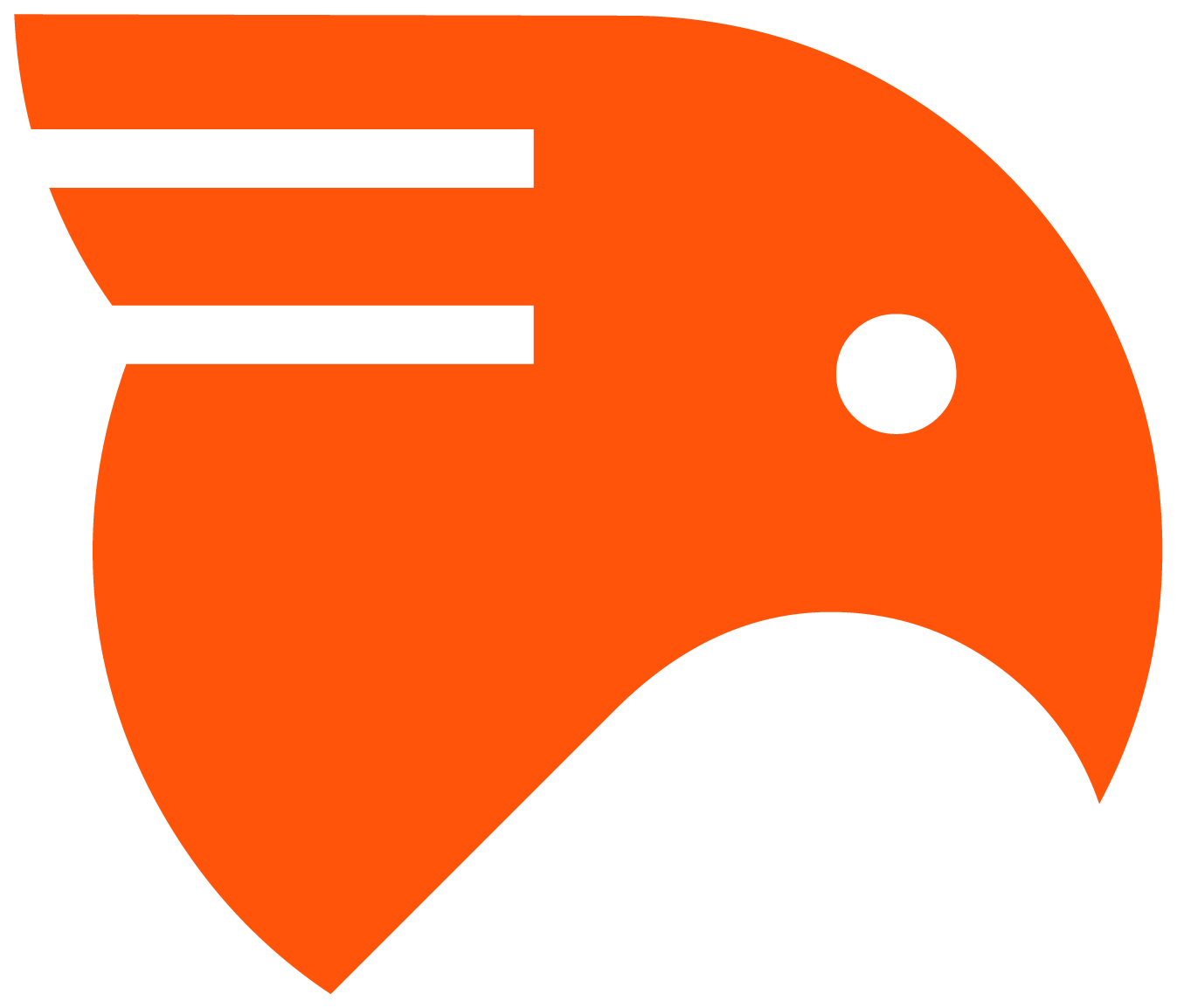}}}}
\title{Bi-intuitionistic logics through the abstract algebraic logic lens}
\author{Jonte Deakin \\
    \footnotesize{\texttt{u7480977@anu.edu.au}} \\
    \footnotesize{The Australian National University, Canberra, Ngunnawal \& Ngambri Country, Australia} \\
    \\
    Ian Shillito \\
    \footnotesize{\texttt{i.b.p.shillito@bham.ac.uk}} \\
    \footnotesize{University of Birmingham, Birmingham, United Kingdom}}
    \date{}
\begin{document}

\maketitle        

\begin{abstract}
Since the discovery of critical mistakes in Rauszer's work on bi-intuitionistic logics,
solid foundations for these have progressively been rebuilt.
However, the algebraic treatment of these logics has not yet been
tended to.
We fill this gap by algebraically analysing the bi-intuitionistic logics 
$\wbil$ and $\sbil$.
Given that these logics are only distinguished as consequence relations,
and not as sets of theorems (hence the conflation in Rauszer's work),
the algebraic tools we use are tailored to the treatment of such relations.
We mainly inspect these logics through the lens of abstract algebraic logic,
but we also provide an alternative algebraic analysis of $\wbil$ and $\sbil$ as
logic preserving degrees of truth and truth, respectively.
Our results pertaining to $\wbil$ and $\sbil$ are formalised in the interactive theorem
prover Rocq.

\vspace{0.2cm}
\noindent \textbf{Keywords:} abstract algebraic logic, bi-intuitionistic logic, interactive theorem prover Rocq, algebraizability
\end{abstract}


\section{Introduction}

Bi-intuitionistic logic extends intuitionistic logic with the \emph{exclusion}
operator $\excl$, dual to intuitionistic implication $\to$.
This extension is mathematically natural, as with $\excl$ a symmetry is regained in the language
given that each operator now has a dual: 
$\top$ has $\bot$, $\land$ has $\lor$, $\to$ has $\excl$ and
even $\neg\phi:=\phi\to\bot$ has $\wneg\phi:=\top\excl\phi$.
Despite being simple, this addition is surprisingly impactful as key constructive properties of intuitionistic
logic, like the disjunction property, are not retained in bi-intuitionistic logic.
This failure is notably due to the holding of a bi-intuitionistic version of the 
\emph{law of excluded middle}: $\phi\lor\wneg\phi$.
Still, the logic obtained \emph{conservatively} extends intuitionistic logic, which implies
that the addition of the exclusion operator does not yet cause a collapse to classical logic.

Historically, bi-intuitionistic logic was developed by Cecylia Rauszer in a series of articles
in the 1970s~\cite{Rau74alg,Rau74calc,Rau76,Rau77craig} leading to her Ph.D.~thesis~\cite{Rau80}.
While appearances of bi-intuitionistic can be detected prior to these articles,
notably in 1942 in Moisil's work~\cite{Moi42} and in 1971 in Klemke's work~\cite{Kle71}, 
the breadth and foundational nature of Rauszer's work advocate for her place
as founder of this logic.
In particular, she studied this logic under a wide variety of aspects:
algebraic semantics, axiomatic calculus, sequent calculus, and Kripke semantics. 
Unfortunately, through time a variety of mistakes have been detected in her work.
First, Crolard proved in 2001 that bi-intuitionistic logic is not complete for
the class of rooted frames~\cite[Corollary 2.18]{Cro01}.
However, the canonical model constructions of Rauszer are rooted, hence a contradiction.
Second, Pinto and Uustalu found in 2009~\cite{PinUus09} a counterexample to 
Rauszer's claim of admissibility of cut for the sequent calculus she designed~\cite[Theorem 2.4]{Rau74calc}.
Finally, Gor\'{e} and Shillito in 2020~\cite{GorShi20} detected confusions in Rauszer's work about the holding of the deduction theorem in bi-intuitionistic logic, as well as issues in her completeness proofs.

At the root of the issues in Rauszer's work is a conflation: two \emph{distinct} logics, 
understood as consequence relations, were identified in her treatment of bi-intuitionistic logic~\cite{GorShi20}.
More precisely, Gor\'{e} and Shillito exhibited a natural split between a \emph{weak} and
\emph{strong} bi-intuitionistic logic,
repesctively called $\wbil$ and $\sbil$.
They showed that 
(1) $\sbil$ is an extension of $\wbil$, 
(2) $\wbil$ satisfies the traditional deduction theorem while $\sbil$ only satisfies a modified version of it,
and that (3) $\wbil$ and $\sbil$ respectively captured the \emph{local} and \emph{global} consequence
relations of the Kripke semantics for the bi-intuitionistic language.
This situation perfectly mirrors a known splitting phenomenon in modal logic,
where the \emph{local} and the \emph{global} modal logics satisfy (1), (2) and (3)
in the same way $\wbil$ and $\sbil$ do.

Once this conflation is noted, the foundations laid by Rauszer can be disentangled and rectified. 
This is effectively what was performed by Gor\'{e} and Shillito~\cite{GorShi20} for the axiomatic calculi and Kripke semantics of $\wbil$ and $\sbil$.
This work was fully formalised in the interactive theorem prover Rocq (formerly Coq~\cite{Coq820})
in the second author's PhD thesis~\cite{Shi23}.
Similar results connecting the local \emph{first-order} bi-intuitionistic logic and a Kripke semantics
with constant domains were formalised by Kirst and Shillito~\cite{KirShi25}.
Further, a constructive reverse analysis of the results of soundness and completeness for $\wbil$
with respect to the Kripke semantics was led in Rocq by Shillito and Kirst~\cite{ShiKir24}.
Additionally, prior works on proof systems for bi-intuitionistic logic~\cite{Gor00,Cro01,Wan08,PinUus09,Tra17,KowOno17,GorPosTiu08,BuiGor07}
can be assessed through the distinction between the two logics as pertaining to the local logic $\wbil$.
In the first-order case, a recent work studied the local bi-intuitionistic logic
via polytree labelled sequent calculi~\cite{LyoShiTiu25}.
To date, a major pillar has been omitted in this revision process: the study of logic via algebras.
In this paper, we fill this gap by separating and studying
$\wbil$ and $\sbil$ through algebraic semantics.

Given that $\wbil$ and $\sbil$ can only be distinguished as consequence relations,
and not sets of theorems, we need algebraic tools tailored to such relations.
Note that traditional tools of algebraic semantics may not be helpful,
as they tend to define logics as sets of theorems and most often as the set of
formulas always interpreted as the top element of the algebras of a given class.

\emph{Abstract algebraic logic}, hereafter called \AAL, is an approach to algebraic semantics
founded on the treatment of logics as consequence relations.
This field has developed over the last half century, with early
works by Rasiowa~\cite{Ras87} and Czelakowski~\cite{Cze81}.
Arguably, the start of what we know today as \AAL~begins with a 1986 work by Blok and Pigozzi~\cite{BloPig86},
which they further developed in following years~\cite{BloPig89,BloPig92}.
From there on, this field was expanded by many and flourished into
modern abstract algebraic
logic~\cite{Mor24,CinNog13,CinNog21,FonJanPig03,LavPre21,Raf13}.%
\footnote{A place where one can find a much more detailed
history of abstract algebraic logic is the introduction to Font's book on the
topic~\cite{Fon16}.}
With the tools of abstract algebraic logic, ties between logics and algebras
can be categorised according to their strength, giving rise to the so-called \emph{Leibniz hierarchy}.
High enough in the hierarchy, some ties are so strong that they form \emph{bridges}
between a logic and a class of algebras, as they allow one to transfer properties of
the class of algebras into properties of the logic and vice-versa~\cite{BloPig89,CzePig99,KowMet19}.

In what follows we situate the logics $\wbil$ and $\sbil$ in the Leibniz hierarchy.
We show that $\sbil$ stands high in this hierarchy, as it is 
implicative and finitely algebraizable with respect to the class of bi-Heyting algebras.
The algebraizability of $\sbil$ allows us to use bridge theorems providing insights on
properties of $\sbil$ and of bi-Heyting algebras.
In comparison, $\wbil$ is located much lower in the hierarchy as it is only equivalential,
and not even finitely so.
The contrast between $\sbil$ and $\wbil$ is stark, as the former is tightly connected
to bi-Heyting algebras while we prove the latter to have no algebraic semantics (in the sense of \AAL)
over these algebras.

While striking, the distinction between the two logics is clearly visible in the algebraic setting
only thanks to the general framework of \AAL,
which crucially treats logics as consequence relations. 
Such tools, developed in the late 1980s, were not available to Rauszer at the time
of her work on bi-intuitionistic logic in the 1970s.
So, as a by-product of our results, we would like to offer the following historical analysis
of the confusions in Rauszer's works:
they are due to the absence, at the time, of a well-developed adequate framework to
study bi-intuitionistic logics as consequence relations.

To stay clear from the turmoils of the past,
we continue the line of work of the second author and formalise our results in Rocq,
using the full strength of classical logic in the metalogic.
Unfortunately, many results we rely on come from \AAL, which is
an extremely abstract and general field.
Formalising this field would take years of community effort,
a task we have not tried to achieve in this paper.
Instead, we assume general \AAL~results to hold and formalise
results which pertain in particular to the logics $\wbil$ and $\sbil$
and to the class of bi-Heyting algebras.
For clarity, each formalised results is flagged by a clickable Rocq logo~\coqdoc{}
leading to its formalisation in Rocq.
Our entire formalisation can be conveniently explored on a web browser
(\url{https://ianshil.github.io/algbiint/toc.html})
or downloaded
(\url{https://github.com/ianshil/algbiint}).

We start in Section~\ref{sec:aal} by introducing the elements of \AAL~necessary
to our work.
More precisely, we provide notions of universal algebra and discuss the Leibniz hierarchy.  
Then, we lay in Section~\ref{sec:biint} the syntactic, axiomatic and Kripke semantic
foundations of bi-intuitionistic logics.
Section~\ref{sec:leibhier} is the core of our paper and is devoted to the situation of
the logics $\sbil$ and $\wbil$, with the former treated first and the latter second.
We leverage this analysis in the Leibniz hierarchy in Section~\ref{sec:bridge}
by using bridge theorems to harvest results about $\sbil$.
Before concluding in Section~\ref{sec:concl}, we make a step aside \AAL~in
Section~\ref{sec:preserv} and
provide an alternative algebraic analysis of both logics through the notions
of logic \emph{preserving truth} and \emph{preserving degrees of truth}.


\section{Abstract algebraic logic}\label{sec:aal}

In this section we recall the basic definitions and theorems from \AAL~which we
use in the sequel.
We do not aim to provide a complete introduction or explanation of \AAL:
for this purpose, we refer to the textbook by Font~\cite{Fon16} and Moraschini's
monograph~\cite{Mor24}.

\subsection{Universal Algebra}

\AAL~uses universal algebra for the algebraic part of algebraic logic.
So, we introduce the elements of universal algebra needed for our study.%
\footnote{The textbook by Burris and Sankappanavar~\cite{BurSan81} is a 
good reference for an introduction to universal algebra.}

\begin{definition}
  \label{def:language}
  A \emph{language} is a pair $\lan{L} = (Op,ar)$ where
  $Op$ is a set of operator symbols, and
  $ar : Op \to\N$ is a function assigning an arity to each operator symbol.
\end{definition}

\begin{definition}
  \label{def:univ-algebra}
  An \emph{algebra} $\alg{A}$ of type $\lan{L}$ is a pair $(X,\{\lambda^\alg{A} \mid \lambda \in Op\})$ such that 
  $X$ is a set, and for all $\lambda$, $\lambda^\alg{A}$ is a function $\lambda^\alg{A} : \alg{A}^n \to \alg{A}$ 
  for $n = ar(\lambda)$. 
\end{definition}

We often abuse notation and write $a\in\alg{A}$, and $B\subseteq\alg{A}$ and $f:Y\to\alg{A}$ to mean
$a\in X$, and $B\subseteq X$ and $f:Y\to\alg{A}$ for an algebra $\alg{A}=(X,\{\lambda^\alg{A} \mid \lambda \in Op\})$.

Next, we define congruences, which are well behaved equivalence
relations used to define quotient algebras.

\begin{definition}
  \label{def:congruences}
  A \emph{congruence relation} on an algebra $\alg{A}$ of type $\lan{L}$
  is a relation $\congr$ satisfying the following.
  \begin{center}
  \begin{tabular}{r l}
    (Refl) & $a \congr a$ \\
    (Symm) & If $a \congr b$ then $b \congr a$ \\
    (Trans) & If $a \congr b$ and $b \congr c$ then $a \congr c$ \\
    (Cong) & If $a_1 \congr b_1$ and ... and $a_n \congr b_n$, then for any $\lambda$ with arity $n$,
      $\lambda^\alg{A} a_1 ... a_n \congr \lambda^\alg{A} b_1 ... b_n$
  \end{tabular}
  \end{center}
  The set of congruences on an algebra $\alg{A}$ will be written $Con(\alg{A})$.
\end{definition}

The following definitions pertaining to congruences will help us situate the logic $\wbil$ in the Leibniz hierarchy.

\begin{definition}
 Given a congruence relation $\congr$ and an algebra $\alg{A}$, we define the \emph{quotient algebra} $\alg{A}/\!\!\congr$,
  as having the underlying set as the quotient of the set by $\congr$ and the operators 
  $\lambda^{\alg{A}/\congr}([a_1],...,[a_n]) = [\lambda^\alg{A}
  a_1...a_n]$\footnote{This is a well defined algebra by the (Cong) property of
    the congruence relation.}

  Given a class of algebras $\clalg{K}$ and an arbitrary algebra $\alg{A}$, let 
  $Con_\clalg{K}(\alg{A}) = \{\congr \ \in Con(\alg{A}) \mid \alg{A}/\!\!\congr\,\in \clalg{K}\}$
  be the \emph{relative congruences} of $\alg{A}$ with respect to $\clalg{K}$.
\end{definition}

We now draw our attention to formulas generated from a language.

\begin{definition}
  Let $\Prop$ be a unique countably infinite set of propositional variables,
  and $\lan{L}=(Op,ar)$ a language.
  The set $\form_\lan{L}$ of formulas of $\lan{L}$ is inductively defined through the following
  clauses:
  \begin{enumerate}
    \item if $p\in\Prop$ then $p\in\form_\lan{L}$;
    \item for all $\lambda \in Op$ with $ar(\lambda) = n$,
    	if $a_i \in \form_\lan{L}$ for all $1\leq i\leq n$, then $\lambda a_1...a_n \in \form_\lan{L}$.
  \end{enumerate}
\end{definition}

  When $\lambda$ is a binary connective we usually use infix notation,
  i.e.~we write $a_1 \lambda a_2$ for $\lambda a_1a_2$.
  We omit the subscript of $\form_\lan{L}$ when $\lan{L}$ is clear from context.
  We use the letters $p,q,r$ to denote elements of $\Prop$, and $\phi,\psi,\chi,\theta,\delta,\epsilon,\gamma,\eta,\zeta$ to
  denote elements of $\form_\lan{L}$, and $\Gamma,\Delta$ for sets of formulas.
  We also write $\overline{p}$ for a sequence of proposition letters.
  Often, a formula $\phi$ is written as $\phi(\overline{p})$ to indicate that
  the proposition letters occurring in it are among those in the sequence
  $\overline{p}$. 
  We also use this convention for sets of formulas and write $\Gamma(\overline{p})$
  for $\Gamma$.
  
  From now on, we distinguish algebraic and logical operators by adding a dot 
  on top of the algebraic ones, e.g.~$\alambda$.

We now define an interpretation of formulas into algebras.%
\footnote{Instead of the definition we give below, one can instead
use homomorphisms from the formula algebra of a language $\lan{L}$
to interpret formulas in algebras of type $\lan{L}$.}

\begin{definition}
  Let $\alg{A}$ be an algebra of type $\lan{L}=(Op,ar)$.
  A \emph{valuation} on $\alg{A}$ is a function $\val:\Prop\to\alg{A}$.
  Given a valuation $\val$ over $\alg{A}$ and a formula $\phi\in\form_\lan{L}$,
  we recursively define the \emph{interpretation} $\int{\val}(\phi)$ of $\phi$ 
  in $\alg{A}$ via $\val$ as follows\footnote{As $Form_\lan{L}$ is the free algebra in
  $\lan{L}$ over $Prop$, this is equivalent to giving a homomorphism from
$Form_\lan{L}$ to $\alg{A}$.}:
  \begin{center}
  \begin{tabular}{l c l}
  $\int{\val}(p)$ & $:=$ & $\val(p)$ \\
  $\int{\val}(\lambda\phi_1\dots\phi_n)$ & $:=$ & $\alambda(\int{\val}(\phi_1))\dots(\int{\val}(\phi_n))$ \\
  \end{tabular}
  \end{center}
\end{definition}

We turn to the definition of equational consequence, which forms the semantic side of
abstract algebraic logic.

\begin{definition}\label{def:eqcsq}
  An \emph{equation} is a pair of two formulas, usually written $\phi = \psi$. 

  For a class of algebras $\clalg{K}$, we define the \emph{equational consequence} on $\clalg{K}$ 
  as follows:
  \begin{center}
  \begin{tabular}{l c l}
  $\Theta \vDash_{K} \phi = \psi$ & $if$ &
  $\forall\alg{A}\in\clalg{K}.\,\forall\val.\;\;[\forall\theta.\forall\eta.\;(\theta=\eta)\in\Theta\;\Rightarrow\;\int{\val}(\theta)=\int{\val}(\eta)]\;\Rightarrow\;\int{\val}(\phi)=\int{\val}(\psi)$ \\
  \end{tabular}
  \end{center}
  Where $\Theta$ is a set of equations.

  An algebra $\alg{A}$ \emph{validates}
  $\phi = \psi$ if $\vDash_{\{\alg{A}\}} \phi = \psi$, 
  which we also write as $\alg{A}\vDash \phi = \psi$.
  Similarly, a class of algebras $\clalg{K}$ validates $\phi = \psi$
  if $\vDash_{\clalg{K}} \phi = \psi$ which we also write $\clalg{K}\vDash \phi = \psi$.
\end{definition}

A central notion in universal algebra is that of an algebraic variety.
While this concept only plays a minor role in our work,
we still make use of it.%
\footnote{Quasi-varieties also play an important role in \AAL~and
  universal algebra, but we choose not to introduce this notion here as the only
  algebraic class we treat in detail are bi-Heyting algebras, which are a
  variety. A few of the theorems of \AAL~have been accordingly (but slightly) rewritten to
  only apply to varieties.}

\begin{definition}
  A class of algebras $\clalg{K}$ is a \emph{variety} if there is a set of equations $\Theta$ 
  such that $\clalg{K} =\{\alg{A} \mid \alg{A} \vDash \Theta\}$.
\end{definition}

As groups, rings, Heyting algebras and Boolean algebras can be defined
equationally, they are varieties.

\subsection{Logics in the Leibniz hierarchy}

Here, we define types of logics which appear in the Leibniz hierarchy.
To do so, we first need to define in the abstract what a logic is.
The notion we use originates with Tarski~\cite{Tar30},
where he considered logics as consequence relations.
His definition was in turn extended with substitution invariance
by \L{}o\'{s} and Suzko~\cite{LosSuz1958}.
This understanding of logic as consequence relation
 is a cornerstone of \AAL.

\begin{definition}
  \label{def:logic}
  A \emph{logic} $\log{L}$ of type $\lan{L}$, is a relation, usually written $\vdash_{\log{L}}$,
  from $\mathcal{P}(\form_\lan{L})$ to $\form_\lan{L}$ such that the following hold.
  \begin{enumerate}
    \item[(I)] If $\phi \in \Gamma$, then $\Gamma \vdash_{\log{L}} \phi$;
    \item[(M)] If $\Gamma \vdash_{\log{L}} \phi$ and $\Gamma \subseteq \Delta$, then $\Delta \vdash_{\log{L}} \phi$;
    \item[(C)] If $\Gamma \vdash_{\log{L}} \phi$ and $\Delta \vdash \psi$ for every $\psi \in \Gamma$, then
      $\Delta \vdash_{\log{L}} \phi$;
    \item[(S)] If $\Gamma \vdash_{\log{L}} \phi$, then $\sigma\Gamma \vdash_{\log{L}} \sigma \phi$ 
      for every substitution $\sigma$.
  \end{enumerate}
  The conditions I,M,C,S are also referred to as Identity, Monotonicity, Cut and
  Structurality, respectively. When the context is clear, we drop the subscript in $\vdash_{\log{L}}$.

  We make use of the standard shorthands, writing
  \begin{center}
  \begin{tabular}{r c l}
    $\vdash\phi$ & for & $\emptyset \vdash \phi$ \\
    $\Gamma, \psi \vdash \phi$ & for & $\Gamma \cup \{\psi\} \vdash \phi$ \\
    $\Gamma, \Delta \vdash \phi$ & for & $\Gamma \cup \Delta \vdash \phi$ \\
    $\Gamma \vdash \Delta$ & for & $\Gamma \vdash \delta \text{ for all }
    \delta \in \Delta$ \\
    $\Gamma \dashv\vdash \Delta$ & for & $\Gamma \vdash \Delta \text{ and }
    \Delta \vdash \Gamma.$
  \end{tabular}
  \end{center}

  A \emph{theory} is a set of formulas closed under consequence, 
  that is a set $\Gamma$ such that $\Gamma \vdash \phi$ implies $\phi \in \Gamma$.
  For any logic we denote its set of theories as $Th_{\vdash}$.

  A logic is \emph{finitary} if, given $\Gamma \vdash \phi$, then there exists a finite
  $\Delta\subseteq\Gamma$ such that $\Delta \vdash \phi$.
\end{definition}

Next, relying on Definition~\ref{def:eqcsq}, we define what an algebraic semantics is
from the perspective of abstract algebraic logic.

\begin{definition}\label{def:taualgsem}
  For a logic $\log{L}$, a class of algebras $\clalg{K}$ and a set of equations in one
  variable $\tau(x)$, $\log{L}$ is said to have a \emph{$\tau(x)$-algebraic
  semantics} if for all $\Gamma \cup \{\phi\} \subseteq\form$,
  $$ 
  \Gamma \vdash \phi \iff \tau(\Gamma) \vDash_\clalg{K} \tau(\phi)
  $$
  where $\tau(\phi)$ is the set of equations $\tau(x)$ where all occurrences of $x$
  are replaced by $\phi$, and $\tau(\Gamma)=\bigcup\{\tau(\gamma)\mid\gamma\in\Gamma\}$.
  We sometimes omit the mention of $x$ in $\tau(x)$ and allow ourselves to write $\tau$.
\end{definition}

The notion of $\tau(x)$-algebraic semantics is not yet strong enough to connect
algebras and logics.
Indeed, many logics have strange and non-standard algebraic semantics.
For instance, following Glivenko's theorem~\cite{Gli29}, Heyting algebras
are a $\{\neg \neg x = \top\}$-algebraic semantics for classical logic.
Therefore, we need a way of ignoring pathological examples like this.
To do so, we introduce the notion of algebraizability.
The following definition was first given by Blok and Pigozzi~\cite{BloPig89}, with a
textbook account given by Font~\cite[Definition 3.11]{Fon16}.

\begin{definition}\label{def:algebraizable}
  A logic $\log{L}$ is \emph{algebraizable with respect to a class
  of algebras $\clalg{K}$} if there exists
  a set of equations $\tau(x)$, called \emph{defining equations},
  and formulas $\Delta(x,y)$, called \emph{equivalence formulas}, such that
  for all $\Gamma \cup \{\phi\} \subseteq\form$
  and all sets of equations $\Theta \cup \{\epsilon = \delta\}$, the
  following conditions are satisfied.
  \begin{center}
  \begin{tabular}{c @{\hspace{1cm}} l}
    (ALG1) & $\Gamma \vdash \phi \iff \tau(\Gamma) \vDash_\clalg{K} \tau(\phi)$ \\
    \rule{0pt}{2.5ex} 
    (ALG2) & $\Theta \vDash_\clalg{K} \epsilon = \delta \iff \Delta(\Theta) \vdash \Delta(\epsilon,\delta)$ \\
    \rule{0pt}{2.5ex} 
    (ALG3) & $\phi \dashv\vdash \Delta(\tau(\phi))$ \\
    \rule{0pt}{2.5ex} 
    (ALG4) & $\epsilon = \delta \Dashv\vDash_\clalg{K} \tau(\Delta(\epsilon,\delta))$
  \end{tabular}
  \end{center}
  Note that we write $\Delta(\epsilon,\delta)$ to mean the set of formulas $\Delta(x,y)$
  where all occurrences of $x$ and $y$ are respectively replaced by $\epsilon$ and $\delta$,
  and $\Delta(\Theta)=\bigcup\{\Delta(\epsilon,\delta)\mid (\epsilon=\delta)\in\Theta\}$ for a set of equations $\Theta$.

  We also say that $\log{L}$ is \emph{algebraizable} if there exists a
  class of algebras $\clalg{K}$ such that $\log{L}$ is algebraizable
  with respect to that class.
  If the defining equations and the equivalence formulas are finite, we say that
  $\log{L}$ is \emph{finitely algebraizable}.
\end{definition}

A logic can be algebraizable with respect to several classes of algebras.
However, there is a largest such class. 

\begin{definition}
  For any algebraizable logic $\log{L}$, the \emph{equivalent algebraic semantics} 
  of $\log{L}$ is the largest class of algebras such that $\log{L}$ is algebraizable 
  with respect to that class. 
\end{definition}

The existence of an equivalent algebraic semantics for every algebraizable logic \cite[Corollary 3.18]{Fon16}
ensures that the above is well-defined.

\begin{example}
  Boolean algebras are the equivalent algebraic semantics of classical logic,
  Heyting algebras are the equivalent algebraic semantics of intuitionistic logic, and
  modal algebras are the equivalent algebraic semantics of the global modal logic $\log{K}$\cite[Page 120]{Fon16}.
\end{example}

Font \cite[Corollary 3.18]{Fon16} gives an easy way to check if a variety is 
the equivalent algebraic semantics of a logic.

\begin{theorem}\label{thrm:easqv}
  If $\log{L}$ is algebraizable with respect to a variety $\clalg{V}$,
  then $\clalg{V}$ is the equivalent algebraic semantics of $\log{L}$.
\end{theorem}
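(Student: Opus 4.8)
The plan is to prove the two inclusions $\clalg{V}\subseteq\clalg{K}$ and $\clalg{K}\subseteq\clalg{V}$, where $\clalg{K}$ denotes the equivalent algebraic semantics of $\log{L}$, which exists by \cite[Corollary 3.18]{Fon16}. The first inclusion is immediate: since $\log{L}$ is algebraizable with respect to $\clalg{V}$, and $\clalg{K}$ is by definition the \emph{largest} class with respect to which $\log{L}$ is algebraizable, we get $\clalg{V}\subseteq\clalg{K}$. All of the work lies in the reverse inclusion.

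For $\clalg{K}\subseteq\clalg{V}$ I would first compare the two equational consequences $\vDash_\clalg{V}$ and $\vDash_\clalg{K}$. Being algebraizable, $\log{L}$ carries equivalence formulas and defining equations witnessing (ALG1)--(ALG4) of Definition~\ref{def:algebraizable} both relative to $\clalg{V}$ and relative to $\clalg{K}$. The key \AAL~fact I would invoke is that the equivalence formulas of an algebraizable logic are unique up to interderivability, so that one and the same $\Delta(x,y)$ may be used for both classes. Feeding this common $\Delta$ into (ALG2) then yields, for every set of equations $\Theta\cup\{\epsilon=\delta\}$,
\[
  \Theta \vDash_\clalg{V} \epsilon=\delta
  \iff \Delta(\Theta)\vdash\Delta(\epsilon,\delta)
  \iff \Theta \vDash_\clalg{K} \epsilon=\delta,
\]
so that $\vDash_\clalg{V}$ and $\vDash_\clalg{K}$ coincide. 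Taking $\Theta=\emptyset$ in particular shows that $\clalg{V}$ and $\clalg{K}$ validate exactly the same equations. This is the step I expect to be the main obstacle: everything downstream is bookkeeping, but aligning the two equational consequences rests on the non-elementary uniqueness of the algebraizing transformers, which I would quote from the general theory rather than reprove.

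Finally I would exploit that $\clalg{V}$ is a \emph{variety}. Writing $\Sigma$ for the set of all equations validated by $\clalg{V}$, the fact that $\clalg{V}$ is axiomatised by \emph{some} set of equations forces $\clalg{V}=\{\alg{A}\mid\alg{A}\vDash\Sigma\}$: any axiomatising set is contained in $\Sigma$, so its model class, namely $\clalg{V}$ itself, contains the model class of $\Sigma$, while the reverse inclusion is trivial. By the previous paragraph every algebra of $\clalg{K}$ validates each equation of $\Sigma$, hence lies in $\clalg{V}$, giving $\clalg{K}\subseteq\clalg{V}$ and therefore $\clalg{V}=\clalg{K}$, as desired. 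An alternative packaging would sidestep the explicit comparison of consequences by quoting the \AAL~theorem that the equivalent algebraic semantics is the quasivariety generated by any algebraizing class, whence $\clalg{K}$ equals the quasivariety generated by $\clalg{V}$, which is just $\clalg{V}$ since a variety is closed under the quasivariety operators and so generates itself.
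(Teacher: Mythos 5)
Your proof is correct, but note that the paper does not prove this theorem at all: it is imported directly from Font [Corollary 3.18], consistent with the authors' stated policy of assuming general \AAL~results and only formalising what is specific to $\wbil$, $\sbil$ and $\biha$. Your argument is a legitimate reconstruction. The inclusion $\clalg{V}\subseteq\clalg{K}$ is indeed immediate once one grants the existence of the equivalent algebraic semantics as the \emph{largest} algebraizing class (which the paper also takes from Font). For the converse you lean on one non-trivial external fact --- the uniqueness of the algebraizing transformers up to interderivability --- and this is exactly the right lever: once a single $\Delta$ serves both classes, (ALG2) forces $\vDash_{\clalg{V}}$ and $\vDash_{\clalg{K}}$ to coincide, so every algebra of $\clalg{K}$ validates every equation valid in $\clalg{V}$, and the equational definability of the variety $\clalg{V}$ then pins $\clalg{K}$ inside it; the closing bookkeeping (any axiomatising set is contained in the full equational theory $\Sigma$ of $\clalg{V}$, whence $\clalg{V}=\{\alg{A}\mid\alg{A}\vDash\Sigma\}$) is sound. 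Your ``alternative packaging'' --- the equivalent algebraic semantics is the quasivariety generated by any algebraizing class, and a variety, being a quasivariety, generates itself --- is essentially how Font derives the corollary, so that version is closest to the uncited source proof. Either way, nothing is missing beyond the two quoted \AAL~facts, which is in keeping with how the paper itself treats results at this level of generality.
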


Let us shift our focus to syntactic considerations on classes of logics in the
\AAL~setting.
The following definition is a syntactic characterisation of what it means for a
logic to be algebraizable~\cite[Section 3.3]{Fon16}.

\begin{theorem}\label{thm:syntaxalgebraizable}
  A logic $\log{L}$ is \emph{algebraizable} if and only if there exist 
  a set of defining equations $\tau(x)$ and equivalence formulas $\Delta(x,y)$
  such that the following conditions, together with (ALG3) from Definition~\ref{def:algebraizable}, hold.
  \begin{center}
  \begin{tabular}{r @{\hspace{1cm}} l}
    (R) & $\vdash \Delta(x,x)$ \\
    \rule{0pt}{2.5ex} 
    (Sym) & $\Delta(x,y) \vdash \Delta(y,x)$ \\
    \rule{0pt}{2.5ex} 
    (Trans) & $\Delta(x,y), \Delta(y,z) \vdash \Delta(x,z)$ \\
    \rule{0pt}{2.5ex} 
    (Re) & $\Delta(x_1,y_1), ... \Delta(x_n,y_n) \vdash 
      \Delta(\lambda x_1 ... x_n, \lambda y_1 ... y_n)$ 
      for any $\lambda\in Op$ with $ar(\lambda)=n$
  \end{tabular}
  \end{center}
\end{theorem}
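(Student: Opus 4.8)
The plan is to prove the biconditional in two directions, with the forward direction routine and the converse carrying the real content. For the forward direction, suppose $\log{L}$ is algebraizable with defining equations $\tau(x)$ and equivalence formulas $\Delta(x,y)$ witnessing (ALG1)--(ALG4) over some class $\clalg{K}$. Then (ALG3) is immediate, being one of the four clauses. For the remaining conditions I would feed the abstract laws of equality through the left-to-right half of (ALG2). Since the interpreted symbol $=$ is genuine equality in every algebra, the equational consequences $\emptyset\vDash_{\clalg{K}}x=x$, $\{x=y\}\vDash_{\clalg{K}}y=x$, $\{x=y,y=z\}\vDash_{\clalg{K}}x=z$ and $\{x_1=y_1,\dots,x_n=y_n\}\vDash_{\clalg{K}}\lambda x_1\dots x_n=\lambda y_1\dots y_n$ hold for \emph{any} $\clalg{K}$, being reflexivity, symmetry, transitivity and compatibility of equality; applying (ALG2) to each of them returns precisely (R), (Sym), (Trans) and (Re).

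For the converse I assume (R), (Sym), (Trans), (Re) and (ALG3), and must manufacture a class $\clalg{K}$ satisfying (ALG1)--(ALG4). The idea is to let the syntax dictate the semantics. First I would define a candidate equational consequence $\vDash^{*}$ by setting $\Theta\vDash^{*}\epsilon=\delta$ to mean $\Delta(\Theta)\vdash\Delta(\epsilon,\delta)$. Conditions (R), (Sym), (Trans), (Re), in combination with the structurality and cut of $\vdash$, make $\vDash^{*}$ reflexive, symmetric, transitive, closed under replacement of subterms and invariant under substitution, i.e.\ a bona fide equational consequence relation. I would then take $\clalg{K}$ to be the class of $\lan{L}$-algebras on which $\vDash^{*}$ is sound and prove the completeness-type identity $\vDash_{\clalg{K}}\,=\,\vDash^{*}$. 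Granting this identity, (ALG2) holds by the very definition of $\vDash^{*}$, (ALG3) is our hypothesis, and the remaining two conditions follow formally: using (ALG2) and the interderivability $\Delta(\tau(\phi))\dashv\vdash\phi$ supplied by (ALG3), the equivalences $\tau(\Gamma)\vDash_{\clalg{K}}\tau(\phi)\Leftrightarrow\Delta(\tau(\Gamma))\vdash\Delta(\tau(\phi))\Leftrightarrow\Gamma\vdash\phi$ deliver (ALG1), and the analogous computation through $\vDash^{*}$ gives (ALG4). When $\clalg{K}$ is a variety, which is the case relevant to this paper, Theorem~\ref{thrm:easqv} further identifies it as the equivalent algebraic semantics of $\log{L}$.

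The main obstacle is the identity $\vDash_{\clalg{K}}=\vDash^{*}$, the sole genuinely semantic step. Its soundness half, that every $\vDash^{*}$-consequence holds throughout $\clalg{K}$, is built into the choice of $\clalg{K}$; the completeness half is where the work lies, as it demands, for each failure $\Theta\not\vDash^{*}\epsilon=\delta$, an algebra of $\clalg{K}$ together with a valuation validating $\Theta$ yet separating $\epsilon$ from $\delta$. I would supply these by a Lindenbaum--Tarski construction: conditions (R), (Sym), (Trans), (Re) ensure that $a\congr b\iff\Delta(\Theta)\vdash\Delta(a,b)$ is a congruence on the formula algebra $\form$, whence the quotient $\form/\!\!\congr$ is a well-defined $\lan{L}$-algebra that acts as the free refuting model. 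The delicate point, and the crux of the theorem, is verifying that this quotient actually lies in $\clalg{K}$ and refutes $\epsilon=\delta$; this is exactly where (ALG3) is indispensable, since it is what keeps the $\tau$-image of a formula and its $\Delta$-congruence class compatible, and hence what separates genuine algebraizability from the weaker phenomenon of $\log{L}$ merely possessing congruence formulas.
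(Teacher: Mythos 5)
This theorem is one of the background results the paper imports from the \AAL{} literature: it is stated with a citation to Font's textbook and is given no proof in the paper itself (nor in the formalisation, which deliberately assumes such general \AAL{} facts). So there is no in-paper argument to compare against; what you have reconstructed is, in substance, the standard Blok--Pigozzi/Font proof, and it is essentially correct. The forward direction via pushing reflexivity, symmetry, transitivity and compatibility of genuine equality through (ALG2) is exactly right. For the converse, your plan --- define $\Theta\vDash^{*}\epsilon=\delta$ as $\Delta(\Theta)\vdash\Delta(\epsilon,\delta)$, take $\clalg{K}$ to be the algebras on which $\vDash^{*}$ is sound, and establish $\vDash_{\clalg{K}}=\vDash^{*}$ by a Lindenbaum--Tarski quotient of the formula algebra modulo the congruence $a\congr b$ iff $\Delta(\Theta)\vdash\Delta(a,b)$ --- is the right decomposition, and you correctly isolate the crux. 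Two points you assert but should spell out if this were written in full: (i) membership of the quotient in $\clalg{K}$ follows by lifting any valuation into $\form/\!\!\congr$ to a substitution on $\form$ (possible since every element is a class of a formula) and then using structurality and cut of $\vdash$; (ii) the role of (ALG3) is slightly more localized than ``keeping the $\tau$-image compatible with the congruence class'' --- the quotient lies in $\clalg{K}$ and refutes $\epsilon=\delta$ already from (R), (Sym), (Trans), (Re); (ALG3) is what converts the definitional (ALG2) into (ALG1) and (ALG4), via $\Gamma\dashv\vdash\Delta(\tau(\Gamma))$ and $\Delta(\epsilon,\delta)\dashv\vdash\Delta(\tau(\Delta(\epsilon,\delta)))$, and it is also what recovers the detachment property $x,\Delta(x,y)\vdash y$ that separates algebraizable from merely equivalential logics. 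With those details filled in, the argument goes through.
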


The first three condition tell us that $\Delta(x,y)$ forms an equivalence relation,
while (Re) specifies it to a congruence.
Finally, (ALG3) ensures that the defining equations and equivalence formulas
do not lose information over the translation from logic to algebra and back to logic.

Most standard logics (intuitionistic, classical and global modal) are
algebraizable with equivalence formulas $\{x \to y, y \to x\}$ and defining
equation $x = \top$.
Rasiowa and Sikorski~\cite{RasSik63} identified this pattern before the creation of \AAL,
and abstracted the properties of implication necessary to be 
algebraizable in the following way.

\begin{definition}\label{def:implicative}
  An \emph{implicative logic} is some logic $\log{L}$ over a type $\lan{L}$
  with a binary operator $\to$ such that the following holds, for any $p,q,r \in\Prop$ and
  $\lambda\in Op$ with $ar(\lambda)=n$.
  \begin{center}
  \begin{tabular}{r @{\hspace{1cm}} l}
    (IL1) & $\vdash p \to p$ \\
    \rule{0pt}{2.5ex} 
    (IL2) & $p \to q, q \to r \vdash p \to r$ \\
    \rule{0pt}{2.5ex} 
    (IL3) & $\{p_k\to q_k\mid 1\leq k\leq n\} \cup \{q_k\to p_k\mid 1\leq k\leq n\} \vdash 
      \lambda p_1 ... p_n \to \lambda q_1 ... q_n$   \\
    \rule{0pt}{2.5ex} 
    (IL4) & $p, p \to q \vdash q$ \\
    \rule{0pt}{2.5ex} 
    (IL5) & $p \vdash q \to p$ \\
  \end{tabular}
  \end{center}
\end{definition}

Using the syntactic characterisation of algebraizability of Theorem~\ref{thm:syntaxalgebraizable},
it can easily be shown that implicative logics are algebraizable~\cite[Proposition 3.15]{Fon16}.
Therefore, implicative logics are sitting higher up in the Leibniz hierarchy.

\begin{proposition}
  \label{prop:imparealg}
  Implicative logics are finitely algebraizable with defining equation 
  $\{x = x \to x\}$ and equivalence formulas $\{x \to y, y \to x\}$.
\end{proposition}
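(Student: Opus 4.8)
The plan is to apply the syntactic characterisation of algebraizability given in Theorem~\ref{thm:syntaxalgebraizable}, instantiated with the defining equation $\tau(x)=\{x = x\to x\}$ and the equivalence formulas $\Delta(x,y)=\{x\to y,\,y\to x\}$ announced in the statement. Thus it suffices to verify the conditions (R), (Sym), (Trans), (Re) together with (ALG3), using only the implicative axioms (IL1)--(IL5) of Definition~\ref{def:implicative} and the structural properties (I), (M), (C), (S) that every logic enjoys by Definition~\ref{def:logic}. Finiteness of $\tau$ and $\Delta$ is immediate, so once algebraizability is established we obtain \emph{finite} algebraizability at no extra cost.

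The four conditions governing the congruence behaviour of $\Delta$ match the implicative axioms almost verbatim, and I would dispatch them first. Condition (R) unfolds to $\vdash x\to x$, which is exactly (IL1). Condition (Sym) is a triviality, since the sets $\Delta(x,y)=\{x\to y,\,y\to x\}$ and $\Delta(y,x)=\{y\to x,\,x\to y\}$ coincide, so the claim holds by identity (I). For (Trans), the antecedent $\Delta(x,y),\Delta(y,z)$ supplies $x\to y,\,y\to z$ and $z\to y,\,y\to x$; applying the suitable substitution instances of (IL2) twice yields $x\to z$ and $z\to x$, i.e.~$\Delta(x,z)$. Condition (Re) is precisely (IL3) used twice: once with $p_k:=x_k,\,q_k:=y_k$ to obtain $\lambda x_1\dots x_n\to\lambda y_1\dots y_n$, and once with the roles of the $x_k$ and $y_k$ interchanged to obtain the converse implication, both instances drawing their premises from $\bigcup_k\Delta(x_k,y_k)$.

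The only condition carrying genuine derivational content is (ALG3). Unfolding the chosen parameters gives $\tau(\phi)=\{\phi=\phi\to\phi\}$ and hence $\Delta(\tau(\phi))=\Delta(\phi,\phi\to\phi)=\{\phi\to(\phi\to\phi),\,(\phi\to\phi)\to\phi\}$, so the goal is $\phi\dashv\vdash\{\phi\to(\phi\to\phi),\,(\phi\to\phi)\to\phi\}$. For the left-to-right inclusion, the implication $(\phi\to\phi)\to\phi$ follows from $\phi$ by the (IL5)-instance $p\vdash q\to p$ with $p:=\phi$ and $q:=\phi\to\phi$, while $\phi\to(\phi\to\phi)$ is in fact a theorem: from the (IL1)-theorem $\vdash\phi\to\phi$ and the (IL5)-instance $\phi\to\phi\vdash\phi\to(\phi\to\phi)$ one concludes by cut (C), whence $\phi\vdash\phi\to(\phi\to\phi)$ by monotonicity (M). For the right-to-left inclusion one wants $\{\phi\to(\phi\to\phi),\,(\phi\to\phi)\to\phi\}\vdash\phi$; here the (IL4)-instance $\phi\to\phi,\,(\phi\to\phi)\to\phi\vdash\phi$ combined with the (IL1)-theorem $\vdash\phi\to\phi$ via cut gives $(\phi\to\phi)\to\phi\vdash\phi$, and monotonicity finishes.

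I expect the step requiring the most care to be (ALG3): the subtlety is that the useful instances of (IL4) and (IL5) carry the antecedent $\phi\to\phi$, which must be discharged by feeding in the theorem $\vdash\phi\to\phi$ through cut rather than being left dangling. Everything else is a direct transcription of the axioms through structurality, so no deeper obstacle is anticipated.
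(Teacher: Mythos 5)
Your proof is correct and follows exactly the route the paper indicates: it verifies (R), (Sym), (Trans), (Re) and (ALG3) for $\tau(x)=\{x=x\to x\}$ and $\Delta(x,y)=\{x\to y,\,y\to x\}$ via the syntactic characterisation of Theorem~\ref{thm:syntaxalgebraizable}, which is precisely the argument the paper delegates to Font's Proposition~3.15 without spelling it out. Your handling of (ALG3), discharging the $\phi\to\phi$ antecedents of the (IL4)/(IL5) instances by cut against the (IL1) theorem, is the right way to fill in the one nontrivial step.
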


Now, we use some of the properties of Theorem~\ref{thm:syntaxalgebraizable}
to syntactically define significant classes of logics sitting below algebraizable
ones in the hierarchy~\cite[Section 6.1]{Fon16}.

\begin{definition}\label{def:equivalential}
  A logic is called \emph{protoalgebraic} if it has a set of equivalence formulas that 
  validate (R) and (MP')%
  \footnote{Usually this condition is called (MP), but we add the prime here
  to distinguish it from the proof-theoretic rule \GHrule{MP} given in Figure~\ref{fig:Hilbert}.},
  shown below. 
  \begin{center}
  \begin{tabular}{r @{\hspace{1cm}} l}
    (MP') & $x,\Delta(x,y) \vdash y$ \\
  \end{tabular}
  \end{center}
  An \emph{equivalential logic} is a protoalgebraic whose
  equivalence formulas also validate (Re).
  For protoalgebraic and equivalential logics, the set $\Delta(x,y)$
  is also called the set of \emph{congruence formulas}.
  A finitely equivalential logic is an equivalential logic where $\Delta(x,y)$ is finite.
\end{definition}

To close this section, let us introduce the so-called ``Isomorphism theorem",
which presents an isomorphism between relative congruences
and \emph{deductive filters} for algebraizable logics.
We start by defining deductive filters.

\begin{definition}\label{def:logicfilters}
  For any logic $\log{L}$ and algebra $\alg{A}$, a \emph{$\log{L}$-filter} (or \emph{deductive filter}) is a set $F
  \subseteq \alg{A}$ such that, for all $\Gamma \cup \{\phi\} \in \form$ and valuation $\val$ over $\alg{A}$:
  $$
  \text{if } \Gamma \vdash \phi \text{ and } \int{\val}(\Gamma) \subseteq F \text{ then } \int{\val}(\phi) \in F.
  $$
  We let $Fi_\vdash(\alg{A})$ be the set of deductive filters on an algebra $\alg{A}$.
\end{definition}

\begin{example}
  Classical logic-filters on Boolean algebras are precisely the lattice filters,
  as are intuitionistic logic-filters on Heyting algebras.
  Global modal logic $\log{K}$-filters on modal algebras are precisely the open
  filters i.e.~the filters $F$ such that if $p \in F$ then $\square p \in F$\cite[Exercise 2.31]{Fon16}.
\end{example}

We now give the Isomorphism theorem, which was first proven by
Blok and Pigozzi~\cite{BloPig89}.

\begin{theorem}[The Isomorphism Theorem]
  \label{thrm:theisothrm}
  If $\log{L}$ is algebraizable with equivalent algebraic semantics $\clalg{K}$, 
  for any algebra $\alg{A}$ (not necessarily in $\clalg{K}$), then there is a lattice 
  isomorphism between $Con_\clalg{K}(\alg{A})$ and $Fi_\vdash(\alg{A})$.
\end{theorem}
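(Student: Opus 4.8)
The plan is to exhibit two explicit, mutually inverse, order-preserving maps between $Fi_\vdash(\alg{A})$ and $Con_\clalg{K}(\alg{A})$, built from the defining equations $\tau(x)$ and the equivalence formulas $\Delta(x,y)$ witnessing the algebraizability of $\log{L}$ with respect to $\clalg{K}$. Writing $\Delta^\alg{A}(a,b)$ for the set $\{\int{\val}(\psi)\mid\psi\in\Delta(x,y)\}$ under the valuation $\val$ sending $x\mapsto a$, $y\mapsto b$, and $\tau^\alg{A}(a)$ for the set of pairs $\{(\int{\val}(\epsilon),\int{\val}(\delta))\mid(\epsilon=\delta)\in\tau\}$ under $x\mapsto a$, I would define
\[
  \Omega(F)=\{(a,b)\in\alg{A}^2 \mid \Delta^\alg{A}(a,b)\subseteq F\},
  \qquad
  \Phi(\theta)=\{a\in\alg{A}\mid \tau^\alg{A}(a)\subseteq\theta\}.
\]
Here $\Omega$ is the Leibniz operator restricted to filters, and $\Phi$ is its candidate inverse. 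First I would check these maps land in the correct sets, then that they are mutually inverse, and finally that they are order isomorphisms.

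For $\Omega(F)\in Con_\clalg{K}(\alg{A})$, I would first show $\Omega(F)$ is a congruence using the syntactic characterisation of Theorem~\ref{thm:syntaxalgebraizable} together with the closure of $F$ under $\vdash$: reflexivity comes from (R), symmetry from (Sym), transitivity from (Trans), and the (Cong) clause from (Re), each time instantiating the relevant consequence and using that $F$, being a deductive filter, is closed under it. To see that moreover $\alg{A}/\Omega(F)\in\clalg{K}$, I would use that $\clalg{K}$ is a variety, say axiomatised by a set $\Sigma$ of equations: for every axiom $(\epsilon=\delta)\in\Sigma$ we have $\vDash_\clalg{K}\epsilon=\delta$, so (ALG2) from Definition~\ref{def:algebraizable} gives $\vdash\Delta(\epsilon,\delta)$; as these are theorems, their interpretations lie in $F$ under every valuation, which by definition of $\Omega(F)$ places $(\int{\val}(\epsilon),\int{\val}(\delta))$ in $\Omega(F)$, i.e.\ the quotient validates $\epsilon=\delta$. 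Dually, to show $\Phi(\theta)\in Fi_\vdash(\alg{A})$ I would translate any consequence $\Gamma\vdash\phi$ via (ALG1) into $\tau(\Gamma)\vDash_\clalg{K}\tau(\phi)$, observe that $a\in\Phi(\theta)$ says precisely that $\tau$ holds of $a$ in the quotient $\alg{A}/\theta\in\clalg{K}$, and conclude by the soundness of equational consequence in that quotient.

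The mutual inversion is where the remaining two algebraizability conditions enter. For $\Phi(\Omega(F))=F$ I would use (ALG3), namely $\phi\dashv\vdash\Delta(\tau(\phi))$, which lets me rewrite ``$\tau^\alg{A}(a)\subseteq\Omega(F)$'' as ``$\Delta^\alg{A}(\tau(a))\subseteq F$'' and hence as $a\in F$. For $\Omega(\Phi(\theta))=\theta$ I would use (ALG4), $\epsilon=\delta\Dashv\vDash_\clalg{K}\tau(\Delta(\epsilon,\delta))$, together with $\alg{A}/\theta\in\clalg{K}$, to identify $(a,b)\in\Omega(\Phi(\theta))$ with the equation $a=b$ holding in $\alg{A}/\theta$, i.e.\ with $(a,b)\in\theta$. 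Both $\Omega$ and $\Phi$ are manifestly monotone with respect to inclusion, so being mutually inverse they form an order isomorphism of posets; since an order isomorphism between lattices automatically preserves all existing meets and joins, this is the claimed lattice isomorphism.

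I expect the main obstacle to be the two range conditions of the second paragraph---that $\Omega(F)$ is genuinely a \emph{relative} congruence (its quotient lying in $\clalg{K}$) and that $\Phi(\theta)$ is genuinely a deductive filter---since these are exactly the steps that must invoke the full strength of algebraizability, (ALG1) and (ALG2), and the variety structure of $\clalg{K}$, rather than only the formal manipulation of $\Delta$ and $\tau$. By contrast, the congruence/filter axioms, the mutual inversion via (ALG3) and (ALG4), and monotonicity are essentially bookkeeping once the correspondence between membership in $F$ and validity in the quotient has been set up cleanly.
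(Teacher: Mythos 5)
The paper does not prove this theorem at all: it is imported as a black box from Blok and Pigozzi, so there is no ``paper proof'' to compare against. Your argument is the standard textbook proof of the Isomorphism Theorem (essentially the one in Blok--Pigozzi and in Font's book), and it is correct in all essentials: $\Omega$ is the Leibniz operator restricted to deductive filters, $\Phi$ is the $\tau$-induced inverse, (ALG2) yields that the quotient by $\Omega(F)$ satisfies the axioms of the variety, (ALG1) yields that $\Phi(\theta)$ is a deductive filter, and (ALG3)/(ALG4) give the two inversion identities; mutual inverse monotone maps between lattices are lattice isomorphisms. The only point worth tightening is your appeal to Theorem~\ref{thm:syntaxalgebraizable} for the congruence axioms of $\Omega(F)$: that theorem is an existential statement, so you should instead derive (R), (Sym), (Trans) and (Re) for the \emph{given} $\Delta$ directly from (ALG2), which is immediate since $\vDash_\clalg{K} x = x$, $x=y \vDash_\clalg{K} y=x$, $x=y, y=z \vDash_\clalg{K} x=z$ and $\{x_i=y_i\}_i \vDash_\clalg{K} \lambda \overline{x} = \lambda\overline{y}$ all hold trivially in any class of algebras. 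With that substitution the proof is complete and matches the literature the paper cites.
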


In our paper, we exploit this theorem to reduce a proof that a logic
is not algebraizable to a proof of the absence of such an isomorphism.


\section{Basics of bi-intuitionistic logics}\label{sec:biint}

In this section we present the basics of the two bi-intuitionistic logics $\wbil$ and $\sbil$ we are analysing
through the tools of \AAL.
More precisely, we introduce their syntax, axiomatic proof systems, Kripke semantics
and known facts of relevance.
Most of what is presented here can be found in the work of Gor\'{e} and Shillito~\cite{GorShi20}
and in Shillito's PhD thesis~\cite{Shi23}.


\subsection{Syntax}

As mentioned above, bi-intuitionistic logics are expressed in the language of intuitionistic logic extended with the exclusion operator~$\excl$. More formally: 

\begin{definition}[\coqdoc{Synt.Syntax.html\#form}]
We define the propositional language $\lanbi=(Op_{BI},ar_{BI})$ where $Op_{BI}=\{\top,\bot,\land,\lor,\to,\excl\}$
and $ar_{BI}(\top)=ar_{BI}(\bot)=0$ and $ar_{BI}(\land)=ar_{BI}(\lor)=ar_{BI}(\to)=ar_{BI}(\excl)=2$. Alternatively, the formulas of $\form_{\lanbi}$ can be seen as generated by the following grammar in Backus-Naur form:
\[
   \phi ::= p\in\Prop \mid \bot \mid \top \mid \phi\,\land\,\phi \mid \phi\,\lor\,\phi \mid \phi\,\to\,\phi \mid \phi\,\excl\,\phi
\]
We define $\phi\leftrightarrow\psi:=(\phi\to\psi)\land(\psi\to\phi)$.
We also define the abbreviations $\neg\phi:=(\phi\to\bot)$ and $\wneg\phi:=(\top\excl\phi)$, respectively called \emph{negation} and \emph{co-negation} (or also \emph{weak} negation).
\end{definition}

The added binary operator $\phi\excl\psi$ is intended to be the dual of
$\phi\to\psi$ and is usually read as ``$\phi$ excludes $\psi$''. Consequently,
$\wneg$ is also the dual of $\neg$. As we often use repetitions of the pattern $\DN$, 
we recursively define $(\DN)^n\phi$~(\coqdoc{Synt.Syntax.html\#DN_form}),
for a formula $\phi$ and a natural number $n$, to be $\phi$ if $n=0$,
and $\DN(\DN)^{n-1}\phi$ otherwise.

For a formula $\phi$, we recursively define its bi-depth $d(\phi)$~(\coqdoc{Synt.Syntax.html\#depth}) on the structure of $\phi$: $d(p)=d(\bot)=d(\bot)=1$  for $p\in Prop$, $d(\phi_1\land\phi_2)= d(\phi_1\lor\phi_2)=max(d(\phi_1),d(\phi_2))$, and $d(\phi_1\to\phi_2)= d(\phi_1\excl\phi_2)=max(d(\phi_1),d(\phi_2)) +1$.

In the following, when $\Gamma$ refers to a set of formulas, we write $\Gamma,\phi$ or $\phi,\Gamma$ to mean $\Gamma\cup\{\phi\}$.


\subsection{Axiomatic proof theory}

Following \AAL, we focus on logics as consequence relations and
axiomatically capture the two bi-intuitionistic logics using \emph{generalised Hilbert calculi},
which manipulate consecutions.

The generalised Hilbert calculi for $\wbil$ (\coqdoc{GenHil.BiInt_GHC.html\#wBIH_prv}) and $\sbil$ (\coqdoc{GenHil.BiInt_GHC.html\#sBIH_prv})~\cite{GorShi20}, extend
the one for intuitionistic logic, containing the axioms $A_1$ to $A_{10}$,
with the axioms $A_{11}$ to $A_{14}$ together with the rule \GHrule{wDN} for $\wbil$
and the rule \GHrule{sDN} for $\sbil$, all shown in Figure~\ref{fig:Hilbert}.
There, $\mathcal A$ in the rule \GHrule{Ax} refers to the set of all instances of axioms.

A derivation of $\Gamma\vdash\phi$ in $\wbil$ (resp.~$\sbil$) is a tree
of consecutions built using the rules \GHrule{MP} and \GHrule{wDN}
(resp.~\GHrule{sDN}) in Figure~\ref{fig:Hilbert} with instances of \GHrule{Ax}
and \GHrule{El} as leaves.
If there is a derivation of $\Gamma\vdash\phi$ in $\wbil$ (resp.~$\sbil$),
we write $\wderiv{\Gamma}{\phi}$ (resp.~$\sderiv{\Gamma}{\phi}$).
We write $\nowderiv{\Gamma}{\phi}$ (resp.~$\nosderiv{\Gamma}{\phi}$)
if $\wderiv{\Gamma}{\phi}$ (resp.~$\sderiv{\Gamma}{\phi}$) does not hold.
We also write $\ideriv{\Gamma}{\phi}$ to mean that \emph{both} 
$\wderiv{\Gamma}{\phi}$ and $\sderiv{\Gamma}{\phi}$,
and similarly for $\noideriv{\Gamma}{\phi}$.
We formally define the logics $\wbil$ and $\sbil$ as the sets 
$\{(\Gamma,\phi):\;\wderiv{\Gamma}{\phi}\}$ and
$\{(\Gamma,\phi):\;\sderiv{\Gamma}{\phi}\}$ respectively.

\begin{figure}[t]
\begin{center}
\small
\begin{tabular}{l@{\hspace{0.7cm}}l @{\hspace{1cm}} l@{\hspace{0.7cm}}l}
$A_{1}$ & $\phi\to(\psi\to\phi)$ & $A_{2}$ & $(\phi\to(\psi\to\chi))\to((\phi\to\psi)\to(\phi\to\chi))$ \\
$A_{3}$ & $\phi\to(\phi\lor\psi)$ & $A_{4}$ & $\psi\to(\phi\lor\psi)$ \\
$A_{5}$ & $(\phi\to\chi)\to((\psi\to\chi)\to((\phi\lor\psi)\to\chi))$ & $A_{6}$ & $(\phi\land\psi)\to\phi$ \\
$A_{7}$ & $(\phi\land\psi)\to\psi$ & $A_{8}$ & $(\chi\to\phi)\to((\chi\to\psi)\to(\chi\to(\phi\land\psi)))$ \\
$A_{9}$ & $\bot\to\phi$ & $A_{10}$ & $\phi\to\top$ \\
 & & & \\
$A_{11}$ & $\phi\to(\psi\lor(\phi\excl\psi))$ & $A_{12}$ & $(\phi\excl\psi)\to\wneg(\phi\to\psi)$ \\
$A_{13}$ & $((\phi\excl\psi)\excl\chi)\to(\phi\excl(\psi\lor\chi))$ & $A_{14}$ & $\neg(\phi\excl\psi)\to(\phi\to\psi)$ \\
\end{tabular}
\end{center}

\begin{center}
\begin{tabular}{c@{\hspace{1cm}}c@{\hspace{0.8cm}}c@{\hspace{0.8cm}}c@{\hspace{0.8cm}}c}
$
\inferLineSkip=3pt
\infer[\scriptstyle\GHrule{Ax}]{\Gamma\vdash\phi}{\phi\in\mathcal{A}}
$ & 
$
\inferLineSkip=3pt
\infer[\scriptstyle\GHrule{El}]{\Gamma\vdash\phi}{\phi\in\Gamma}
$ &
$
\inferLineSkip=3pt
\infer[\scriptstyle\GHrule{MP}]{\Gamma\vdash\psi}{
	\Gamma\vdash\phi
	&
	\Gamma\vdash\phi\to\psi}
$ &
$
\inferLineSkip=3pt
\infer[\scriptstyle \GHrule{wDN}]{\Gamma\vdash\DN\phi}{\emptyset\vdash\phi}
$ &
$
\inferLineSkip=3pt
\infer[\scriptstyle \GHrule{sDN}]{\Gamma\vdash\DN\phi}{\Gamma\vdash\phi}
$\\
\end{tabular}
\end{center}

\caption{Generalised Hilbert calculus $\wbil$ for $\wbil$}
\label{fig:Hilbert}
\end{figure}

Here, we present constructively-obtained proof-theoretic results from the work of Shillito \cite{Shi23}.
First, Shillito proves that both logics are \emph{finitary logics} (\coqdoc{GenHil.BiInt_logics.html}) (see Definition~\ref{def:logic}).
Interesting properties of these logics, sometimes exclusive to one, are shown below.

\begin{lemma}
\label{lem:derivations}
We have the following:
\begin{center}
\begin{tabular}{r @{\hspace{0.3cm}} l @{\hspace{0.5cm}} c @{\hspace{0.5cm}} l @{\hspace{0.3cm}} c}
$1.$ & $\ideriv{\Gamma}{\phi\lor\wneg\phi}$ &  &  & 
$(\coqdoc{GenHil.wBIH_meta_interactions.html\#BiLEM})$\\
$2.$ & $\ideriv{\emptyset}{(\phi\excl\psi)\to\chi}$ & $\Leftrightarrow$ & $\ideriv{\emptyset}{\phi\to(\psi\lor\chi)}$ & 
$(\coqdoc{GenHil.wBIH_meta_interactions.html\#dual_residuation})$\\
$3.$ & $\sderiv{\Gamma}{(\phi\excl\psi)\to\chi}$ & $\Leftrightarrow$ & $\sderiv{\Gamma}{\phi\to(\psi\lor\chi)}$ & 
$(\coqdoc{GenHil.sBIH_meta_interactions.html\#sdual_residuation})$\\
$4.$ & $\wderiv{\Gamma,\phi}{\psi}$ & $\Leftrightarrow$ & $\wderiv{\Gamma}{\phi\to\psi}$ & 
$(\coqdoc{GenHil.wBIH_meta_interactions.html\#wBIH_Deduction_Theorem},
\coqdoc{GenHil.wBIH_meta_interactions.html\#wBIH_Detachment_Theorem})$\\
$5.$ & $\sderiv{\Gamma,\phi}{\psi}$ & $\Leftrightarrow$ & $\exists n\in\N.\;\;\sderiv{\Gamma}{(\DN)^n\phi\to\psi}$ & 
$(\coqdoc{GenHil.sBIH_meta_interactions.html\#gen_sBIH_Deduction_Theorem},\coqdoc{GenHil.sBIH_meta_interactions.html\#gen_sBIH_Double_Negated_Detachment_Theorem})$\\
\end{tabular}
\end{center}
\end{lemma}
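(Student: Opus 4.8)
The plan is to establish all five items by direct reasoning inside the generalised Hilbert calculi of Figure~\ref{fig:Hilbert}, after first assembling a small toolkit of derived facts: the provability of $\top$ and of $\phi\to\phi$, the monotonicity of $\lor$ (from $A_3,A_4,A_5$), the distribution pattern embodied in $A_2$, and, most importantly, the two monotonicity laws for exclusion, namely that $\excl$ is isotone in its first argument and antitone in its second. Throughout I will use that $\sbil$ extends $\wbil$, so that any $\wbil$-derivation is also an $\sbil$-derivation, and that in each item the ``detachment'' or backward direction proceeding by \GHrule{MP} is the routine one.

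For item~1, instantiate $A_{11}$ at $\top$ and $\phi$ to obtain $\top\to(\phi\lor(\top\excl\phi))$; since $\top$ is derivable (apply $A_{10}$ to any axiom and use \GHrule{MP}) and $\top\excl\phi=\wneg\phi$, \GHrule{MP} and monotonicity give $\ideriv{\Gamma}{\phi\lor\wneg\phi}$. Item~2 is the co-Heyting residuation at the level of theorems. Its forward direction is short: from $A_{11}$ we have $\phi\to(\psi\lor(\phi\excl\psi))$, and combining $(\phi\excl\psi)\to\chi$ with $\lor$-monotonicity upgrades this to $\phi\to(\psi\lor\chi)$. The backward direction is the crux: from $\phi\to(\psi\lor\chi)$ one must produce $(\phi\excl\psi)\to\chi$, which I reduce, via isotonicity of $\excl$ in its first argument, to deriving the counit $(\psi\lor\chi)\excl\psi\to\chi$; this is where $A_{12}$, $A_{13}$ and $A_{14}$ do their work. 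Item~3 is the same residuation carried out under an arbitrary context $\Gamma$ in $\sbil$: the derivations of item~2 relativise to $\Gamma$, the only sensitive point being any double-negation step, which in $\sbil$ is licensed for non-empty contexts by \GHrule{sDN}.

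Items~4 and~5 are deduction theorems, proved by induction on derivations, their backward directions being immediate. For item~4, given $\wderiv{\Gamma}{\phi\to\psi}$ one gets $\wderiv{\Gamma,\phi}{\psi}$ by \GHrule{El} and \GHrule{MP}; conversely, induction on a derivation of $\wderiv{\Gamma,\phi}{\psi}$ uses $A_1$ for the \GHrule{Ax} and \GHrule{El} cases and $A_2$ for the \GHrule{MP} case. The decisive case is \GHrule{wDN}: since its premise is $\emptyset\vdash\theta$, the rule still fires under $\Gamma$ to give $\wderiv{\Gamma}{\DN\theta}$, after which $A_1$ prepends $\phi\to(-)$; this is precisely why $\wbil$ enjoys the unconditional deduction theorem. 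For item~5, the backward direction iterates \GHrule{sDN} $n$ times on $\sderiv{\Gamma,\phi}{\phi}$ to reach $(\DN)^n\phi$ and concludes by \GHrule{MP}. The forward direction again inducts on the derivation; the new case is \GHrule{sDN}, whose premise $\sderiv{\Gamma,\phi}{\theta}$ now carries a non-empty context, so the induction hypothesis gives $\sderiv{\Gamma}{(\DN)^n\phi\to\theta}$ and monotonicity of $\DN$ — that is, $\vdash(\alpha\to\beta)\to(\DN\alpha\to\DN\beta)$, itself a consequence of the two monotonicity laws for $\excl$ — promotes this to $\sderiv{\Gamma}{(\DN)^{n+1}\phi\to\DN\theta}$. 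In the \GHrule{MP} case the two induction hypotheses come with different exponents $n_1,n_2$; I align them to $\max(n_1,n_2)$ using that $\DN\beta\to\beta$ is derivable — which follows from item~1 by cases on $\beta\lor\wneg\beta$ together with $A_9$ — and then distribute the common antecedent as in $A_2$.

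The main obstacle is the backward direction of the residuation in items~2 and~3, together with the supporting monotonicity lemmas for $\excl$: once $\excl$ is known to be isotone in its first and antitone in its second argument, both the residuation counit and the monotonicity of $\DN$ required for item~5 fall out. A secondary but genuine source of care is the natural-number bookkeeping in item~5, where the \GHrule{MP} case forces one to raise exponents and hence to invoke $\DN\beta\to\beta$; this is the formal trace of the fact that $\sbil$ satisfies only the modified, parametrised deduction theorem rather than the unconditional one.
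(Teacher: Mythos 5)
First, a point of reference: the paper does not actually prove this lemma --- all five items are imported from Shillito's thesis and the accompanying Rocq development --- so your proposal is being measured against the standard derivations rather than an in-paper argument. Your overall architecture matches them: item~1 from $A_{11}$ instantiated at $\top$; items~2--3 driven by $A_{12}$--$A_{14}$ and the double-negation rule, with the context-sensitivity of that rule correctly identified as the reason item~2 is stated only for the empty context in $\wbil$ while item~3 holds under arbitrary $\Gamma$ in $\sbil$; items~4--5 by induction on derivations with the $\GHrule{wDN}$/$\GHrule{sDN}$ case carrying all the weight. Your handling of items~1 and~4, and of the exponent alignment in the $\GHrule{MP}$ case of item~5 via $\DN\beta\to\beta$, is correct.

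There are, however, two genuine problems. The decisive one is in the $\GHrule{sDN}$ case of item~5: the formula $(\alpha\to\beta)\to(\DN\alpha\to\DN\beta)$ that you invoke is \emph{not} a theorem of either logic. On the model with worlds $w\leq u\geq v$ and $I(p)=\{w,u,v\}$, $I(q)=\{w,u\}$, one has $w\Vdash p\to q$ and $w\Vdash\DN p$ but $w\not\Vdash\DN q$ (because $v\leq u$ and $v\not\Vdash q$), so the formula fails locally; since $\wbil$ and $\sbil$ agree on theorems, it is provable in neither. What does hold, and what the step actually needs, is the \emph{rule} form: from $\sderiv{\Gamma}{(\DN)^{n}\phi\to\theta}$ apply $\GHrule{sDN}$ to get $\sderiv{\Gamma}{\DN((\DN)^{n}\phi\to\theta)}$ and then use the K-style distribution theorem $\DN(\alpha\to\beta)\to(\DN\alpha\to\DN\beta)$, which is valid. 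As written, your step rests on a false lemma. The second, lesser problem is a circularity risk in item~2: the isotonicity of $\excl$ in its first argument, which you use to reduce backward residuation to the counit $((\psi\lor\chi)\excl\psi)\to\chi$, is itself standardly a \emph{consequence} of backward residuation (it is the syntactic shadow of (ResE)). To break the circle you must derive it independently, and the only available route is the very chain that proves residuation outright: from $\vdash\phi\to(\psi\lor\chi)$ apply $\GHrule{wDN}$, then $A_{12}$ at $(\phi,\psi\lor\chi)$ to obtain $\neg(\phi\excl(\psi\lor\chi))$, then $A_{13}$ and $A_{14}$ to land on $(\phi\excl\psi)\to\chi$. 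Once that chain is spelled out, the detour through the monotonicity laws and the counit buys nothing; making it explicit would close the gap in items~2 and~3 and also supply the rule-form monotonicity of $\DN$ needed to repair item~5.
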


(1) above shows that a bi-intuitionistic version of the law of excluded-middle holds in both logics.
(2) is a syntactic analogue of the algebraic dual residuation property below.
$$
\inferLineSkip=3pt
\infer={a\excl b\leq c}{a\leq b\lor c}
$$
This law holds in $\sbil$ even when the context of the consecution is not empty, as shown in (3).
While (4) is the deduction theorem for $\wbil$,
it does not hold for $\sbil$, which satisfies the modified version in (5).


\subsection{Relational semantics}

We proceed to define the traditional relational (Kripke) semantics for the bi-intuitionistic language~\cite{Rau80}.
This semantics uses (Kripke) models identical to the ones of intuitionistic logic.

\begin{definition}[\coqdoc{Krip.BiInt_Kripke_sem.html\#model}]
A model $\mo{M}$ is a tuple $(W,\leq,I)$,
where $(W,\leq)$ is a preordered set and
$I:\Prop\to\mathcal P(W)$ is a \emph{persistent} interpretation function
(where $\mathcal P(W)$ is the powerset of $W$) i.e: 
$$\forall v,w\in W.\,\forall p\in\Prop.\;(w\leq v\,\Rightarrow\, w\in I(p)\,\Rightarrow\, v\in I(p))$$
\end{definition}

This semantics extends the forcing relation of intuitionistic logic to $\excl$ in the following way. 

\begin{definition}[\coqdoc{Krip.BiInt_Kripke_sem.html\#wforces}]\label{DefForcBiInt}
Given a model $\mo{M}=(W,\leq,I)$, we define the forcing relation $\mo{M},w\Vdash \phi$
between a world $w\in W$ and a formula $\phi$ as follows, where $\mo{M},w\not\Vdash \phi$
means that $\mo{M},w\Vdash \phi$ does not hold:
\begin{center}
\begin{tabular}{l@{\hspace{0.2cm}}c@{\hspace{0.2cm}}l}
$\mo{M},w\Vdash p$ & $:=$ & $w\in I(p)$\\
$\mo{M},w\Vdash\top$ & $:=$ & always\\
$\mo{M},w\Vdash\bot$ & $:=$ & never\\
$\mo{M},w\Vdash\phi\land\psi$ & $:=$ & $\mo{M},w\Vdash\phi\text{ and }\mo{M},w\Vdash\psi$\\
$\mo{M},w\Vdash\phi\lor\psi$ & $:=$ & $\mo{M},w\Vdash\phi\text{ or }\mo{M},w\Vdash\psi$\\
$\mo{M},w\Vdash\phi\to\psi$ & $:=$ & $\forall v\geq w.\;(\mo{M},v\Vdash\phi\text{ implies }\;\mo{M},v\Vdash\psi)$\\
$\mo{M},w\Vdash\phi\excl\psi$ & $:=$ & $\exists v\leq w.\;(\mo{M},v\Vdash\phi\text{ and }\mo{M},v\not\Vdash\psi)$\\
\end{tabular}
\end{center}
\end{definition}

Crucially, while the semantic clause for $\to$ looks \emph{forward} on the
relation $\leq$, the clause for $\excl$ looks \emph{backward}.%
\footnote{Note that in our formalisation, we use the classically equivalent but intuitionistically
weaker interpretation of $\excl$ given by Shillito and Kirst~\cite{ShiKir24}:
$(\forall v\leq w.(\mo{M},v\Vdash\phi\text{ implies }\mo{M},v\Vdash\psi))$ is false.}
This causes bi-intuitionistic logic to share similarities with tense logic~\cite{Pri55,Pri67,Pri68}.

As already mentioned, the $\DN$ pattern plays a role of importance in our work, as
it is crucial in the distinction between $\wbil$ and $\sbil$ via the rules
$\GHrule{wDN}$ and $\GHrule{sDN}$.
Here, we would like to draw the attention of the reader to the semantic
meaning of this pattern.
To see it, we introduce the following notions.

\begin{definition}
  \label{def:zigzag}
  Given a Kripke model $\mo{M}$, and two elements $w,v \in W$
  we write $w \zz v$ if there exists a $u \in A$ such that $w \leq u$ and $v \leq u$.
  We call the relation $\zz$ the \emph{zig-zag} relation. 
  Additionally, we write $w \zz^n v$ if there exists
  $u_1,...,u_{n-1}$ such that $w \zz u_1 \zz u_2 \dots u_{n-1} \zz v$, 
  with special case $w \zz^0 v$ when $w = v$~(\coqdoc{Krip.sBIH_completeness.html\#n_zz}). 
  We say $w$ and $v$ are \emph{connected}, written $v \bowtie w$, 
  if $w \zz^n v$ for some $n$~(\coqdoc{Krip.sBIH_completeness.html\#zz}). 
  \end{definition}
  
  One can view $\DN$ as a $\Box$ modality over the relation $\zz$~\cite[Lemma 8.9.2]{Shi23}.
  
  \begin{lemma}[\coqdoc{Alg.wBIH_equivalential.html\#n_zz_DN_clos_equiv}]
  \label{lem:jellyfish}
  For any Kripke model $\mo{M}$ and world $w$ in $\mo{M}$, the following holds.
  \begin{center}
  \begin{tabular}{l c l}
  $\mo{M}, w \Vdash (\DN)^n\phi$ & $\Leftrightarrow$ & $\forall v.\;w\zz^n v \; \Rightarrow \; \mo{M}, v \Vdash \phi$
  \end{tabular}
  \end{center}
\end{lemma}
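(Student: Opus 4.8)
The plan is to prove the equivalence by induction on $n$, after first isolating the single-step case $n=1$, which is precisely the observation (recorded just above the statement) that $\DN$ behaves as a $\Box$-modality over the zig-zag relation $\zz$.

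First I would establish the single-step identity
$$\mo{M}, w \Vdash \DN\psi \quad\Leftrightarrow\quad \forall v.\;(w \zz v \;\Rightarrow\; \mo{M}, v \Vdash \psi)$$
by unfolding the forcing clauses. Recall that $\DN\psi = \neg\wneg\psi = (\top\excl\psi)\to\bot$. Unfolding the clause for $\to$, and using that $\bot$ is never forced, $\mo{M}, w \Vdash \DN\psi$ holds iff $\mo{M}, u \not\Vdash \wneg\psi$ for every $u \geq w$. Unfolding the clause for $\excl$ with first argument $\top$ (always forced), $\mo{M}, u \Vdash \wneg\psi$ holds iff there is some $v \leq u$ with $\mo{M}, v \not\Vdash \psi$; here I would use classical reasoning in the metalogic to pass between this existential reading and the intuitionistically weaker negated-universal form actually used in the formalisation (see the footnote following Definition~\ref{DefForcBiInt}). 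Negating, $\mo{M}, u \not\Vdash \wneg\psi$ iff $\mo{M}, v \Vdash \psi$ for all $v \leq u$. Combining the two quantifiers, $\mo{M}, w \Vdash \DN\psi$ holds iff $\mo{M}, v \Vdash \psi$ for every $v$ for which there is some $u$ with $w \leq u$ and $v \leq u$, which is exactly $\forall v.\,(w \zz v \Rightarrow \mo{M}, v \Vdash \psi)$ by Definition~\ref{def:zigzag}.

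With the single-step identity in hand, I would run the induction. For the base case $n=0$, both $(\DN)^0\phi = \phi$ and $w \zz^0 v$ (which holds iff $w=v$) make each side reduce to $\mo{M}, w \Vdash \phi$. For the inductive step I would write $(\DN)^{n+1}\phi = \DN(\DN)^n\phi$ and apply the single-step identity with $\psi = (\DN)^n\phi$, giving that $\mo{M}, w \Vdash (\DN)^{n+1}\phi$ iff $\mo{M}, u \Vdash (\DN)^n\phi$ for every $u$ with $w \zz u$. Applying the induction hypothesis to each such $u$ turns this into: for all $u$ and $v$, if $w \zz u$ and $u \zz^n v$, then $\mo{M}, v \Vdash \phi$. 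What remains is the purely relational decomposition $w \zz^{n+1} v \Leftrightarrow \exists u.\,(w \zz u \wedge u \zz^n v)$, which follows from the inductive definition of $\zz^n$ as the $(n{+}1)$-fold composition of $\zz$. This last equivalence is where I expect the only genuine bookkeeping to lie: one must align the way the chain $w \zz u_1 \zz \dots \zz v$ is split (peeling off the first step) with the syntactic recursion $(\DN)^{n+1} = \DN(\DN)^n$, and keep the quantifier alternation straight. Everything else is routine unfolding together with the classical metatheoretic reasoning already invoked for the $\excl$ clause.
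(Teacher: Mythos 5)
Your proposal is correct and follows essentially the same route as the paper's proof: first establish that $\DN$ acts as a box modality over the single-step zig-zag relation $\zz$ by unfolding the forcing clauses for $\to$, $\excl$, $\top$ and $\bot$, then lift this to $(\DN)^n$ and $\zz^n$ by induction on $n$. If anything, your write-up is more explicit than the paper's (which only spells out one direction of the single-step equivalence and compresses the induction into one sentence), particularly in stating both directions of the biconditional and in isolating the relational decomposition $w \zz^{n+1} v \Leftrightarrow \exists u.\,(w \zz u \wedge u \zz^n v)$ as the bookkeeping needed to align with $(\DN)^{n+1}\phi = \DN(\DN)^n\phi$.
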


\begin{proof}
  First note that $\DN\phi$ is just a shorthand for $(\top \excl \phi) \to \bot$. 
  So, $\mo{M}, w \Vdash (\top \excl \phi) \to \bot$ entails that for all $u \geq w$ we have 
  that $\mo{M}, u \Vdash (\top \excl \phi)$ implies $\mo{M}, u \Vdash \bot$.
  Given that $\mo{M}, u \Vdash \bot$ is a contradiction, we know that $\mo{M}, u \not \Vdash (\top \excl \phi)$
  holds for all $u \geq w$.
  This means that there cannot exist a world $v \leq u$ such that $\mo{M}, v \not \Vdash \phi$.
  Hence if $\mo{M}, v \Vdash \DN\phi$, then $\mo{M}, v \Vdash \phi$ for any world $v$ such that $v
  \zz w$.

  By our previous observation, and an induction on the natural numbers, we can conclude that 
  $\mo{M}, w \Vdash (\DN)^n \phi$ if and only if for all $v \zz^n w$ we have $\mo{M}, v \Vdash \phi$.
\end{proof}

We conclude our digression on $\DN$ by a way to visualise what $\DN\phi$ means on a model: 
it is as a sort of \emph{jellyfish}.
The forcing of this formula in a world $w$ entails the forcing of $\phi$
in the downset of all the elements in the upset of $w$.
In this light, the upset of $w$ is the body of the jellyfish with the
downset of the upsets its tentacles.

An important feature of the relational semantics for intuitionistic logic is
persistence. The second author~\cite[Lemma 8.7.1]{Shi23} proves that persistence is
preserved in the semantics extended to $\excl$.

\begin{lemma}[Persistence \coqdoc{Krip.BiInt_Kripke_sem.html\#Persistence}]\label{BILPers}
Let $\mo{M}=(W,\leq,I)$ be a model. The following holds.
$$\forall v,w\in W.\,\forall\phi.\,(w\leq v\,\Rightarrow\, (\mo{M},w\Vdash\phi\,\Rightarrow\, \mo{M},v\Vdash\phi))$$
\end{lemma}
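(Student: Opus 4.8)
The plan is to argue by a straightforward induction on the structure of $\phi$ (equivalently, on its bi-depth $d(\phi)$), fixing a pair $w\leq v$ throughout and establishing in each case that $\mo{M},w\Vdash\phi$ implies $\mo{M},v\Vdash\phi$. For the base cases: when $\phi=p$ is a propositional variable the claim is literally the persistence condition imposed on $I$ in the definition of a model; when $\phi=\top$ forcing holds everywhere; and when $\phi=\bot$ the hypothesis $\mo{M},w\Vdash\bot$ never holds, so the implication is vacuous.

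For the \emph{local} connectives $\land$ and $\lor$ — whose forcing clauses refer only to forcing at the same world — I would unfold the definition, apply the induction hypothesis to each immediate subformula along $w\leq v$, and reassemble. For instance, from $\mo{M},w\Vdash\phi_1\land\phi_2$ I get $\mo{M},w\Vdash\phi_1$ and $\mo{M},w\Vdash\phi_2$, the induction hypothesis yields $\mo{M},v\Vdash\phi_1$ and $\mo{M},v\Vdash\phi_2$, hence $\mo{M},v\Vdash\phi_1\land\phi_2$; the disjunction case is analogous, splitting on which disjunct is forced at $w$. These are the cases that carry the actual inductive content.

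The two cases worth dwelling on are the quantified connectives $\to$ and $\excl$, and the key observation is that neither needs the induction hypothesis — each follows purely from transitivity of $\leq$. For $\phi=\phi_1\to\phi_2$, assume $\mo{M},w\Vdash\phi_1\to\phi_2$ and take any $u\geq v$; since $w\leq v\leq u$, transitivity gives $w\leq u$, so the clause at $w$ applies to $u$ and delivers $\mo{M},u\Vdash\phi_1\Rightarrow\mo{M},u\Vdash\phi_2$, which is exactly what $\mo{M},v\Vdash\phi_1\to\phi_2$ requires (the worlds above $v$ form a subset of those above $w$). For $\phi=\phi_1\excl\phi_2$ the clause looks \emph{backward}: from $\mo{M},w\Vdash\phi_1\excl\phi_2$ there is some $x\leq w$ with $\mo{M},x\Vdash\phi_1$ and $\mo{M},x\not\Vdash\phi_2$, and since $x\leq w\leq v$ transitivity gives $x\leq v$, so the very same witness $x$ establishes $\mo{M},v\Vdash\phi_1\excl\phi_2$.

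The main obstacle here is conceptual rather than computational: one might worry that persistence, which propagates truth \emph{forward} along $\leq$, could conflict with the backward-looking semantics of $\excl$, the one genuinely new connective relative to intuitionistic logic. The resolution is that passing to a larger world $v\geq w$ only \emph{enlarges} the downset available to witness an exclusion, so the exclusion case requires nothing beyond transitivity. Thus the only place demanding the inductive hypothesis is the handling of $\land$ and $\lor$, and the whole argument reduces to carefully unfolding Definition~\ref{DefForcBiInt} together with the transitivity and persistence already built into the model.
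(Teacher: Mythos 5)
Your proof is correct and is exactly the standard argument (the paper itself does not spell out a proof of Lemma~\ref{BILPers}, deferring to \cite[Lemma~8.7.1]{Shi23} and the Rocq formalisation): structural induction, with the variable case discharged by the persistence of $I$, the $\land/\lor$ cases by the induction hypothesis, and the $\to$ and $\excl$ cases by transitivity of $\leq$ alone, since passing from $w$ to $v\geq w$ shrinks the relevant upset and enlarges the relevant downset. One small caveat: your parenthetical ``equivalently, on its bi-depth $d(\phi)$'' does not quite work with the paper's definition of $d$, since $d(\phi_1\land\phi_2)=\max(d(\phi_1),d(\phi_2))$ gives no strict decrease for the conjuncts — but your actual induction is structural, so nothing is affected.
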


The models in our semantics can be more or less similar, depending on the notion of ``similarity" we have in mind.
Such a notion, which we exploit in this paper, is \emph{$n$-bisimilarity}.
We take inspiration from Badia~\cite[Definition 1]{Bad16} and
Pattinson and de Groot \cite[Definition 4.1]{GroPat19} to define $n$-bisimulation;
although our definition is equivalent to neither.%
\footnote{To be precise, we take the idea to define 
$n$-bisimulations from Badia, and we take the
general form of bi-intuitionistic bisimulation from Pattinson 
and de Groot.}

\begin{definition}[$n$-bisimilarity \coqdoc{Krip.BiInt_bisimulation.html\#n_bisimulation}]\label{def:nbisim}
Let $\mo{M}=(W,\leq,I)$ and $\mo{M}'=(W',\leq',I')$ be models, $w$ be a world in
$W$ and $w'$ be a world in $W'$.
We define the property $\mo{M},w \bisim_n \mo{M}',w'$ recursively on $n\in\mathbb N$.
We have $\mo{M},w \bisim_0 \mo{M}',w'$ if the condition (B1) below is satisfied.
\begin{center}
\begin{tabular}{l l}
(B1) & $\forall p\in Prop.\,(w\in I(p) \; \Leftrightarrow \; w'\in I'(p))$ \\
\end{tabular}
\end{center}
We have $\mo{M},w \bisim_{m+1} \mo{M}',w'$ if the conditions (B1-B5), are satisfied.
\begin{center}
\begin{tabular}{l r l c l r}
(B2) & $\forall v\in W.\,($ & $v\leq w$ & $\Rightarrow$ & $\exists v'\in W'.\,(v'\leq' w'\;\text{  and  }\;\mo{M},v \bisim_m \mo{M}',v')$ & $)$ \\
(B3) & $\forall v\in W.\,($ & $w\leq v$ & $\Rightarrow$ & $\exists v'\in W'.\,(w'\leq' v'\;\text{  and  }\;\mo{M},v \bisim_m \mo{M}',v')$ & $)$ \\
(B4) & $\forall v'\in W'.\,($ & $v'\leq' w'$ & $\Rightarrow$ & $\exists v\in W.\,(v\leq w\;\text{  and  }\;\mo{M},v \bisim_m \mo{M}',v')$ & $)$ \\
(B5) & $\forall v'\in W'.\,($ & $w'\leq' v'$ & $\Rightarrow$ & $\exists v\in W.\,(w\leq v\;\text{  and  }\;\mo{M},v \bisim_m \mo{M}',v')$ & $)$ \\
\end{tabular}
\end{center}
If $\mo{M},w \bisim_n \mo{M}',w'$ holds, we say that $w$ and $w'$ are \emph{$n$-bisimilar}.
\end{definition}

The point of this definition is that
when $w$ and $w'$ are $n$-bisimilar they are indistinguishable by formulas of
bi-depth smaller or equal to $n$. We prove this as follows.

\begin{lemma}[\coqdoc{Krip.BiInt_bisimulation.html\#n_bisimulation_imp_n_depth_equiv}]\label{lem:depthnbisim}
Let $n\in\N$, and $\mo{M}=(W,\leq,I)$ and $\mo{M}'=(W',\leq',I')$ be models.
For any $w\in W$ and $w'\in W'$, such that $\mo{M},w \bisim_n \mo{M}',w'$ we have the following:
$$\forall\phi.\,(d(\phi)\leq n\; \Rightarrow\; (\mo{M},w\Vdash\phi\,\Leftrightarrow\,\mo{M}',w'\Vdash\phi)).$$
\end{lemma}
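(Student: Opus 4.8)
The plan is to prove this by induction on $n$, with an inner induction on the structure of $\phi$. The base case $n=0$ is vacuous: since $d(\phi)\geq 1$ for every formula (as $d(p)=d(\bot)=d(\top)=1$ and the remaining clauses only ever increase the value), the hypothesis $d(\phi)\leq 0$ is never satisfied, so there is nothing to prove. For the inductive step I take, as the \emph{outer induction hypothesis}, that for every pair of models, every $m$-bisimilar pair of worlds $u,u'$, and every formula with $d(\cdot)\leq m$, forcing agrees; and I establish the claim for $n=m+1$. So I fix $w,w'$ with $\mo{M},w\bisim_{m+1}\mo{M}',w'$ and argue by induction on $\phi$ (the \emph{inner induction}), restricting throughout to $d(\phi)\leq m+1$.

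For $\phi$ a propositional variable $p$, condition (B1)---which is part of $\bisim_{m+1}$---gives $w\in I(p)\Leftrightarrow w'\in I'(p)$, that is $\mo{M},w\Vdash p\Leftrightarrow\mo{M}',w'\Vdash p$. The constants $\top$ and $\bot$ are respectively always and never forced, so they are immediate. For $\phi=\psi\land\chi$ and $\phi=\psi\lor\chi$ one has $d(\psi),d(\chi)\leq d(\phi)\leq m+1$ while $w,w'$ remain $(m+1)$-bisimilar, so the inner induction hypothesis applies to $\psi$ and $\chi$ and the Boolean clauses transfer the agreement up to $\phi$.

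The two substantial cases are $\to$ and $\excl$, where the bi-depth strictly drops: if $\phi=\psi\to\chi$ or $\phi=\psi\excl\chi$ then $d(\psi),d(\chi)\leq m$. Consider $\phi=\psi\to\chi$ and suppose $\mo{M},w\Vdash\psi\to\chi$; to show $\mo{M}',w'\Vdash\psi\to\chi$ I take $v'$ with $w'\leq' v'$ and $\mo{M}',v'\Vdash\psi$. Since the clause for $\to$ looks \emph{forward}, I invoke the forward clause (B5) of $\bisim_{m+1}$ to obtain $v$ with $w\leq v$ and $\mo{M},v\bisim_m\mo{M}',v'$. As $d(\psi),d(\chi)\leq m$, the \emph{outer} hypothesis applied to the $m$-bisimilar pair $v,v'$ yields $\mo{M},v\Vdash\psi$, hence $\mo{M},v\Vdash\chi$ by the assumption on $w$, and then $\mo{M}',v'\Vdash\chi$ again by the outer hypothesis; this gives $\mo{M}',w'\Vdash\psi\to\chi$. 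The converse direction is symmetric, using (B3) in place of (B5). The case $\phi=\psi\excl\chi$ is dual: because the clause for $\excl$ looks \emph{backward}, I use the backward clauses (B2) and (B4) to descend to $m$-bisimilar witnesses \emph{below} $w$ and $w'$, and close once more with the outer hypothesis on $\psi$ and $\chi$.

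The main obstacle is purely organisational: making the two inductions interlock. The drop from $\bisim_{m+1}$ to $\bisim_m$ in the $\to$ and $\excl$ cases must be matched against the fact that $d$ increases by exactly one through these connectives, so that the subformulas land at depth $\leq m$ and the outer hypothesis at level $m$ becomes available; by contrast, $\land$ and $\lor$ leave both the depth bound and the bisimulation level unchanged and are discharged by the inner hypothesis. One must also pair each connective with the correct bisimulation clauses---forward $\to$ with (B3) and (B5), backward $\excl$ with (B2) and (B4)---and observe that the symmetric design of (B2)--(B5) supplies both directions of each biconditional.
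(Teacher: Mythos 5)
Your proposal is correct and follows essentially the same route as the paper's own proof: a primary induction on $n$ (with the $n=0$ case vacuous since every formula has bi-depth at least $1$) and a secondary induction on $\phi$, using (B1) for atoms, the inner hypothesis for $\land$ and $\lor$, and the depth drop through $\to$ and $\excl$ to invoke the outer hypothesis on the $m$-bisimilar witnesses supplied by (B3)/(B5) and (B2)/(B4) respectively. The clause pairings and the direction of each argument match the paper exactly.
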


\begin{proof}
The proof goes by primary induction on $n$ and secondary induction on the structure of $\phi$.
Note that the case where $n=0$ is trivial, as there is no formula $\phi$ with $d(\phi)\leq 0$.
So, consider the case where $n = m + 1$ for some $m\in\N$.
We now inspect the structure of $\phi$.
The base case goes through trivially thanks to conditions (B1).
We only consider the remaining cases of $\to$ and $\excl$ as the
others are straightforward. 
\begin{itemize}
\item $\phi=\psi\to\chi:$ Assume that $\mo{M},w\Vdash\psi\to\chi$. We need to show that 
	$\mo{M}',w'\Vdash\psi\to\chi$. Let $v'\in W'$ such that $w'\leq'v'$. We assume that 
	$\mo{M}',v'\Vdash\psi$ and aim to prove that $\mo{M}',v'\Vdash\psi$. Now, condition (B5) together
	with both $\mo{M},w \bisim_{m+1} \mo{M}',w'$ and $w'\leq'v'$ gives us 
	the existence of a $v\in W$ such that $w\leq v$ and $\mo{M},v \bisim_{m} \mo{M}',v'$. 
	Thus, we can use the primary induction on $\mo{M}',v'\Vdash\psi$ to get $\mo{M},v\Vdash\psi$. 
	Therefore, we get $\mo{M},v\Vdash\chi$ as $w\leq v$ and $\mo{M},w\Vdash\psi\to\chi$. 
	By primary induction again, we obtain $\mo{M}',v'\Vdash\chi$. As $v'$ is arbitrary, we get that 
	$\mo{M}',w'\Vdash\psi\to\chi$. The other direction is symmetric.
\item $\phi=\psi\excl\chi:$ Assume that $\mo{M},w\Vdash\psi\excl\chi$. By definition, there is a $v\leq w$
	such that $\mo{M},v\Vdash\psi$ and $\mo{M},v\not\Vdash\psi$. Now, condition (B2) together
	with both $\mo{M},w \bisim_{m+1} \mo{M}',w'$ and $v\leq w$ gives us 
	the existence of a $v'\in W'$ such that $v'\leq' v'$ and $\mo{M},v \bisim_{m} \mo{M}',v'$. 
	Given that $\mo{M},v\Vdash\psi$ and $\mo{M},v\not\Vdash\psi$, we obtain 
	$\mo{M}',v'\Vdash\psi$ and $\mo{M}',v†\not\Vdash\psi$ using the primary induction hypothesis. 
	This implies $\mo{M}',w'\Vdash\psi\excl\chi$, as required. Here again, the other direction is symmetric.\qedhere
\end{itemize}
\end{proof}

Finally, we define the local and global consequence relations on this relational semantics, where $\mo{M},w\Vdash\Gamma$ means $\forall\gamma\in\Gamma.(\mo{M},w\Vdash\gamma)$.
\begin{center}
\begin{tabular}{l@{\hspace{0.5cm}}c@{\hspace{0.5cm}}l}
$\Gamma\locmodels\varphi$ & if & $\forall\mo{M}.\forall w.\;(\mo{M},w\Vdash\Gamma\hspace{0.2cm}\Rightarrow\hspace{0.2cm}\mo{M},w\Vdash\varphi)$\\
$\Gamma\globmodels\varphi$ & if & $\forall\mo{M}.\;((\forall w.\,\mo{M},w\Vdash\Gamma)\hspace{0.2cm}\Rightarrow\hspace{0.2cm}(\forall w.\,\mo{M},w\Vdash\varphi))$\\
\end{tabular}
\end{center}
We write $\Gamma\nolocmodels\phi$ (resp.~$\Gamma\noglobmodels\phi$) whenever
$\Gamma\locmodels\phi$ (resp.~$\Gamma\globmodels\phi$) does not hold.


\subsection{Soundness and completeness}

Exactly as in the case of modal logic, where one distinguishes between
a local and a global modal logic~\cite{HakNeg12},
$\wbil$ and $\sbil$ correspond respectively to the
local and global consequence relations~\cite{GorShi20,Shi23}.

\begin{theorem}[cf. Theorems 8.8.1 and 8.9.1~\cite{Shi23}]\label{thm:soundcomplkripke}
The following holds.
\begin{center}
\begin{tabular}{l c l c}
$\wderiv{\Gamma}{\varphi}$ & $\Leftrightarrow$ & $\Gamma\locmodels\varphi$ & 
$(\coqdoc{Krip.wBIH_completeness.html\#wSoundCompl})$\\
$\sderiv{\Gamma}{\varphi}$ & $\Leftrightarrow$ & $\Gamma\globmodels\varphi$ & 
$(\coqdoc{Krip.sBIH_completeness.html\#sSoundCompl})$
\end{tabular}
\end{center}
\end{theorem}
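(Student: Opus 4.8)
The plan is to treat each biconditional as two implications, giving four tasks in total: soundness and completeness for $\wbil$ against $\locmodels$, and soundness and completeness for $\sbil$ against $\globmodels$. I would prove the two soundness directions by induction on the generalised Hilbert derivation. The cases for $A_1$--$A_{14}$ reduce to checking that each axiom is forced at every world of every model (a routine forcing computation leaning on persistence, Lemma~\ref{BILPers}), and the rules \GHrule{El}, \GHrule{Ax} and \GHrule{MP} preserve both $\locmodels$ and $\globmodels$, the latter two using reflexivity of $\leq$. The genuinely interesting cases are the two $\DN$-rules, and they are exactly what separates the local and global readings.

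For \GHrule{wDN}, the premise $\emptyset\vdash\phi$ forces $\phi$ at every world of every model, so by the jellyfish Lemma~\ref{lem:jellyfish} (its $n=1$ instance) $\DN\phi$ is forced at every world as well, whence $\Gamma\locmodels\DN\phi$; the empty context in the premise is essential here. By contrast \GHrule{sDN} is \emph{not} locally sound, but it \emph{is} globally sound: if $\mo{M}$ forces $\Gamma$ at every world then it forces $\phi$ at every world, and then Lemma~\ref{lem:jellyfish} again gives $\DN\phi$ at every world. This asymmetry---$\DN$ behaving as a $\Box$ over the zig-zag relation $\zz$, and such a $\Box$ preserving global but not local consequence---is the semantic heart of the $\wbil$/$\sbil$ distinction.

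For completeness of $\wbil$ I would argue contrapositively and build a \emph{canonical model} whose worlds are the prime $\wbil$-theories ordered by inclusion. The backbone is a Lindenbaum-style extension lemma (any pair $\Gamma,\Delta$ with $\nowderiv{\Gamma}{\delta}$ for every $\delta\in\Delta$ extends to a prime theory containing $\Gamma$ and still omitting some witness in $\Delta$) together with a truth lemma stating that a formula is forced at a prime theory iff it belongs to it. The truth lemma requires two existence lemmas: a \emph{forward} one for $\to$, as in the intuitionistic case, and a \emph{backward} one for $\excl$, producing a prime theory \emph{below} a given one that forces $\psi$ but not $\chi$ whenever $\psi\excl\chi$ is present. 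Given $\nowderiv{\Gamma}{\varphi}$, extending $\Gamma$ to a prime theory omitting $\varphi$ yields a canonical world forcing $\Gamma$ but not $\varphi$, so $\Gamma\nolocmodels\varphi$.

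I expect the backward existence lemma for $\excl$ to be the main obstacle: it is precisely where Rauszer's rooted-model constructions failed, since one must populate the canonical model with enough prime theories in \emph{both} directions of $\leq$ so that the forward clause for $\to$ and the backward clause for $\excl$ hold simultaneously. For the global case I would \emph{reduce to the local one}. Writing $\Gamma^{\DN}=\{(\DN)^n\gamma\mid\gamma\in\Gamma,\,n\in\N\}$, I would first establish the syntactic equivalence $\sderiv{\Gamma}{\varphi}$ iff $\wderiv{\Gamma^{\DN}}{\varphi}$ (obtained by simulating \GHrule{sDN} through \GHrule{wDN} applied to the $\DN$-closure), and the semantic equivalence $\Gamma\globmodels\varphi$ iff $\Gamma^{\DN}\locmodels\varphi$. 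The latter follows from Lemma~\ref{lem:jellyfish}: $\mo{M},w\Vdash\Gamma^{\DN}$ holds exactly when $\Gamma$ is forced at every world $v$ connected to $w$ (i.e.\ $v\bowtie w$), and passing to the connected component of $w$---a generated submodel that preserves forcing---turns local forcing of $\Gamma^{\DN}$ at a point into global forcing of $\Gamma$ on a component. Combining these two equivalences with the already-established soundness and completeness of $\wbil$ then delivers the $\sbil$ biconditional.
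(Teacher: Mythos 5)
Your proposal is correct and follows essentially the same route as the paper, which only sketches this result and defers to~\cite{ShiKir24,Shi23}: soundness by induction on derivations with the $\DN$-rules as the only non-routine cases, completeness for $\wbil$ via a canonical model of prime theories, and the global case obtained from the local one by restricting to a connected component --- your $\Gamma^{\DN}$-reduction is exactly the mechanism behind the paper's ``pruned'' canonical model.
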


Soundness is proven as usual by showing that all axioms are valid and that rules of each logic preserve the corresponding semantic consequence. 
Completeness for $\wbil$ is proven via a canonical model construction, which has prime theories as points~\cite{ShiKir24}. The proof for $\sbil$~\cite[Theorem 8.9.1]{Shi23} relies on a transformation of the canonical model for $\wbil$~(\coqdoc{Krip.sBIH_completeness.html\#pruned_M}).


\section{$\wbil$ and $\sbil$ in the Leibniz hierarchy}\label{sec:leibhier}

Relying on the semantic and proof-theoretic characterisations of $\wbil$ and $\sbil$ given in the last section, 
we can use the tools of Section~\ref{sec:aal} to situate these logics in the Leibniz hierarchy.
In this section we introduce bi-Heyting algebras, as the class $\biha$ of these algebras plays
a crucial role for both $\wbil$ and $\sbil$.
Then, we locate $\sbil$ in the hierarchy by showing that it is implicative and finitely algebraizable 
with respect to $\biha$.
Finally, we consider $\wbil$ and show that is not algebraizable but 
instead equivalential, despite not being finitely so.
The disconnection between $\wbil$ and $\biha$ in abstract algebraic logic is further 
witnessed by the absence of algebraic semantics for the former on the latter.

\subsection{Bi-Heyting algebras}

We introduce bi-Heyting algebras as extensions of Heyting algebras, which themselves extend 
bounded lattices. These notions are successively defined below.

\begin{definition}\label{def:boundlat}
A \emph{bounded lattice} is an algebraic structure $(X,\atop,\abot,\aland,\alor)$ where $X$
is a set with $\atop$ and $\abot$ as nullary operators, and $\aland$ and $\alor$ as binary
operators on $X$, satisfying the following equalities.
\begin{center}
\begin{tabular}{l c c c}
(P1) & $a \aland \atop$ & $=$ & $a$ \\
(P2) & $a \alor \abot$ & $=$ & $a$ \\
(P3) & $a \aland b$ & $=$ & $b \aland a$ \\
(P4) & $a \aland (b \aland c)$ & $=$ & $(a \aland b) \aland c$ \\
(P5) & $a \aland (a \alor b)$ & $=$ & $a$ \\
(P6) & $a \alor b$ & $=$ & $b \alor a$ \\
(P7) & $a \alor (b \alor c)$ & $=$ & $(a \alor b) \alor c$ \\
(P8) & $a \alor (a \aland b)$ & $=$ & $a$ \\
\end{tabular}
\end{center}
The binary relation $\leq$ on a bounded lattice $L$, defined as $a\leq b$ if $a = a\aland b$, 
is a partial order (reflexive \coqdoc{Alg.Bi_Heyting_Algebras.html\#aleq_refl}, transitive \coqdoc{Alg.Bi_Heyting_Algebras.html\#aleq_trans}, antisymmetric \coqdoc{Alg.Bi_Heyting_Algebras.html\#aleq_antisym}) with 
$\atop$ and $\abot$ as top (\coqdoc{Alg.Bi_Heyting_Algebras.html\#high_one}) and bottom (\coqdoc{Alg.Bi_Heyting_Algebras.html\#low_zero}) elements.
\end{definition}

While we use the order $\leq$ given above to define more naturally (bi-)Heyting algebras,
note that this is just a notation: our algebras are ultimately defined algebraically (i.e.~with
equations) and not order-theoretically (i.e.~with $\leq$ as primitive).

\begin{definition}\label{def:ha-biha}
A \emph{Heyting algebra} is a tuple $(X,\atop,\abot,\aland,\alor,\ato)$ such that 
$(X,\atop,\abot,\aland,\alor)$ is a bounded lattice and $\ato$ is a binary operator 
on $X$ satisfying the following law. 
\begin{center}
\begin{tabular}{l c c c}
$(ResI)$ & $a\aland b \leq c$ & $\Leftrightarrow$ & $a \leq b \ato c$ \\
\end{tabular}
\end{center}

A \emph{bi-Heyting algebra}~(\coqdoc{Alg.Bi_Heyting_Algebras.html\#biHalg}) is a tuple $(X,\atop,\abot,\aland,\alor,\ato,\aexcl)$ 
such that $(X,\atop,\abot,\aland,\alor,\ato)$ is a Heyting algebra and $\aexcl$ 
is a binary operator on $X$ satisfying the following law. 
\begin{center}
\begin{tabular}{l c c c}
$(ResE)$ & $a \leq b \alor c$ & $\Leftrightarrow$ & $a \aexcl b \leq c$ \\
\end{tabular}
\end{center}
\end{definition}

Bi-Heyting algebras inherit many properties of Heyting algebras, but enjoy
many more. 
Some of these additional properties which are useful to our work are 
shown below.

\begin{lemma}
  \label{lem:bi-props}
  For any bi-Heyting algebra $\alg{A}$, and $a,b \in\alg{A}$, the following hold.
  \begin{center}
  \begin{tabular}{r c @{\hspace{2.5cm}} r c @{\hspace{2.5cm}} r c}
  $1.$ & $\awneg \awneg a \leq a$ & $2.$ & $a \leq b \alor (a \aexcl b)$ & $3.$ & $a \aexcl b \leq a \aland \awneg b$
  \end{tabular}
  \end{center}
\end{lemma}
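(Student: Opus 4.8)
The plan is to establish part (2) first, since the other two parts follow from it together with basic lattice reasoning and the dual residuation law (ResE). For (2), I would instantiate (ResE), namely $a \aexcl b \leq c \Leftrightarrow a \leq b \alor c$, at $c := a \aexcl b$: the left-hand side then becomes $a \aexcl b \leq a \aexcl b$, which holds by reflexivity of $\leq$, and reading (ResE) from left to right yields $a \leq b \alor (a \aexcl b)$. A corollary I expect to reuse repeatedly is obtained by setting $a := \atop$ and invoking that $\atop$ is the top element: this gives the algebraic law of excluded middle $b \alor \awneg b = \atop$, recalling that $\awneg b = \atop \aexcl b$.

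For (1), recall that $\awneg\awneg a = \atop \aexcl (\atop \aexcl a)$. Applying (ResE) to the goal $\atop \aexcl (\atop \aexcl a) \leq a$, with $b := \atop \aexcl a$ and $c := a$, reduces it to $\atop \leq (\atop \aexcl a) \alor a$. This is immediate from the excluded-middle corollary above (instantiated at $a$) together with commutativity of $\alor$.

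For (3), I would split the goal $a \aexcl b \leq a \aland \awneg b$ into the two inequalities $a \aexcl b \leq a$ and $a \aexcl b \leq \awneg b$, which suffice because the meet is the greatest lower bound of its arguments. The first follows from (ResE), since it reduces to $a \leq b \alor a$, a basic lattice fact coming from absorption (P5) and commutativity (P6). For the second, since $\awneg b = \atop \aexcl b$, I would first record monotonicity of $\aexcl$ in its first argument, i.e.\ that $a \leq a'$ implies $a \aexcl b \leq a' \aexcl b$: by (ResE) this reduces to $a \leq b \alor (a' \aexcl b)$, which follows by chaining $a \leq a'$ with part (2) applied to $a'$ and $b$. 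The desired inequality is then just the instance $a \leq \atop$.

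All the steps are short, so I do not anticipate a genuine obstacle; the only mildly non-obvious move is recognising that part (2) is essentially a one-line consequence of (ResE) and reflexivity, after which it (and the excluded-middle corollary it yields) drives everything else. The main care needed is bookkeeping about which direction of each residuation law to apply and citing the standard lattice facts (absorption and meet-as-infimum) rather than any real difficulty.
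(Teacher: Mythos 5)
Your proof is correct. Parts (1) and (2) follow essentially the same route as the paper: (2) is exactly the paper's one-line application of (ResE) to the reflexivity instance $a \aexcl b \leq a \aexcl b$, and your proof of (1) — reducing to $\atop \leq (\atop\aexcl a)\alor a$ via (ResE) and discharging it with the excluded-middle corollary of (2) — is the paper's two-step (ResE) derivation in only slightly different packaging.

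Part (3) is where you genuinely diverge. The paper starts from $a \leq b \alor a$, meets with $\atop$, rewrites $\atop$ as $b \alor (\atop\aexcl b)$ using part (2), applies \emph{distributivity} of $\alor$ over $\aland$ to get $a \leq b \alor (a \aland \awneg b)$, and finishes with one application of (ResE). You instead split the goal via the greatest-lower-bound property of $\aland$ into $a\aexcl b \leq a$ and $a\aexcl b \leq \awneg b$, handling the first by (ResE) plus $a \leq b\alor a$, and the second by first establishing monotonicity of $\aexcl$ in its left argument (itself a short (ResE) argument chained with part (2)) and instantiating at $a \leq \atop$. Both arguments are sound, but yours has a small advantage worth noting: distributivity is not among the bounded-lattice equations (P1)--(P8) of Definition~\ref{def:boundlat}, so the paper's step tacitly relies on the standard fact that Heyting algebras are distributive (a consequence of (ResI)) without deriving it; your route needs only the order-theoretic characterisation of $\aland$ as infimum, which is immediate from the lattice axioms. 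As a by-product you also isolate left monotonicity of $\aexcl$, a fact the paper uses elsewhere (e.g.\ in the proof of Theorem~\ref{prop:strongimp}) in its proof-theoretic guise.
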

\begin{proof}
  Recall that $\awneg a$ is an abbreviation for $\atop \aexcl a$. The needed proofs
  are as follows:
  \begin{center}
  \begin{tabular}{r r c l l}
  1. & $\atop \aexcl a$ & $\leq$ & $\atop \aexcl a$ & \\
      & $\atop$ & $\leq$ & $(\atop \aexcl a) \alor a$ & (ResE) \\
      & $\atop\aexcl(\atop \aexcl a)$ & $\leq$ & $a$ & (ResE) \\
      & & & & \\
   2. & $a \aexcl b$ & $\leq$ & $a \aexcl b$ & \\
      & $a$ & $\leq$ & $b \alor (a \aexcl b)$ & (ResE) \\
      & & & & \\
  3. & $a$ & $\leq$ & $b \alor a$ & \\
      & $a$ & $\leq$ & $(b \alor a) \aland \atop$ & (boundedness of $\alg{A}$) \\
      & $a$ & $\leq$ & $(b \alor a) \aland (b \alor (\atop \aexcl b))$ & (Lemma~\ref{lem:bi-props}.2) \\
      & $a$ & $\leq$ & $b \alor (a \aland (\atop \aexcl b))$ & (distributivity of $\alor$) \\
      & $a \aexcl b$ & $\leq$ & $a \aland (\atop \aexcl b)$ & (ResE) \\
  \end{tabular}
  \end{center}
\end{proof}


\subsection{$\sbil$ is algebraizable}

We begin our study on the \AAL~perspective on bi-intuitionistic logics with a look at $\sbil$. 
In particular we seek to show that $\sbil$ is algebraizable with $\biha$ as its equivalent algebraic semantics.
First, we proceed to show that it is (finitely) algebraizable by showing that it is implicative.
Second, we refine this statement and show in a direct way that $\biha$ is 
the equivalent algebraic semantics of $\sbil$. 

Let us show that $\sbil$ is implicative in a purely proof-theoretic way.
As $\sbil$ is an extension of intuitionistic logic, the only difficulty resides
in showing (IL3) of Definition~\ref{def:implicative}. 
More precisely, when $\lambda$ is $\excl$ we cannot rely on the proof
of implicativity of intuitionistic logic~\cite{RasSik63}.

\begin{theorem}
  \label{prop:strongimp}
  The logic $\sbil$ is implicative.
\end{theorem}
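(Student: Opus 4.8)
The plan is to verify that $\sbil$ satisfies each of the five conditions (IL1)--(IL5) of Definition~\ref{def:implicative}, with $\to$ interpreted as the bi-intuitionistic implication. Since $\sbil$ extends intuitionistic logic (which is implicative by \cite{RasSik63}), the properties (IL1), (IL2), (IL4) and (IL5) are inherited immediately: these only involve the connective $\to$ and the intuitionistic axioms $A_1$--$A_{10}$ together with \GHrule{MP}, all of which are available in $\sbil$. Concretely, (IL1) $\vdash p\to p$ and (IL2) transitivity follow from the $A_1,A_2$ axioms; (IL4) $p,p\to q\vdash q$ is just \GHrule{MP} combined with \GHrule{El}; and (IL5) $p\vdash q\to p$ follows from $A_1$ and \GHrule{MP}.

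The real work is (IL3), the congruence condition
\[
\{p_k\to q_k\mid 1\leq k\leq n\}\cup\{q_k\to p_k\mid 1\leq k\leq n\}\vdash \lambda p_1\dots p_n\to\lambda q_1\dots q_n,
\]
which must be checked for every connective $\lambda\in Op_{BI}$. For the nullary connectives $\top,\bot$ the statement is trivial (the premise set is empty and $\vdash\bot\to\bot$, $\vdash\top\to\top$ hold by (IL1)), and for $\land,\lor,\to$ the intuitionistic derivations carry over unchanged. So, as the authors note, the only genuinely new case is $\lambda={\excl}$: I must show
\[
p_1\to q_1,\; q_1\to p_1,\; p_2\to q_2,\; q_2\to p_2 \;\vdash_{\log s}\; (p_1\excl p_2)\to(q_1\excl q_2).
\]

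The hard part will be establishing this exclusion-congruence derivation, and here I expect the semantic route to be cleanest. By soundness and completeness for $\sbil$ (Theorem~\ref{thm:soundcomplkripke}), it suffices to show $\Gamma\globmodels (p_1\excl p_2)\to(q_1\excl q_2)$, where $\Gamma$ is the premise set. Taking any model $\mo M=(W,\leq,I)$ in which every world forces all four premises, persistence (Lemma~\ref{BILPers}) combined with the forcing clauses shows that at every world $p_1$ and $q_1$ are interforcing, and likewise $p_2$ and $q_2$; then the backward-looking clause for $\excl$ in Definition~\ref{DefForcBiInt} transfers a witness $v\leq w$ for $p_1\excl p_2$ directly to a witness for $q_1\excl q_2$, giving the desired implication at every world. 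Alternatively, a purely proof-theoretic derivation can be assembled from axioms $A_{11}$--$A_{14}$ and the dual-residuation property of Lemma~\ref{lem:derivations}.3, which is exactly the syntactic analogue of (ResE) and is tailored to manipulating $\excl$ under $\to$; I would use Lemma~\ref{lem:derivations}.3 to reduce $(p_1\excl p_2)\to(q_1\excl q_2)$ to a statement of the form $p_1\to(p_2\lor(q_1\excl q_2))$ and discharge it using the premises and $A_{11}$. Either way, once (IL3) is secured for $\excl$, all five conditions hold and $\sbil$ is implicative.
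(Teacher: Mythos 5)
Your decomposition is exactly the paper's: (IL1), (IL2), (IL4), (IL5) and the intuitionistic cases of (IL3) are inherited from intuitionistic logic, and the only substantive work is (IL3) for $\excl$. Where you genuinely diverge is in how that case is discharged. The paper stays purely proof-theoretic: it derives the exclusion-congruence from $\Gamma$ by chaining right anti-monotonicity and left monotonicity of $\excl$ \emph{in the presence of the context} $\Gamma$ with transitivity of implication, and it is precisely these context-sensitive monotonicity rules that hold in $\sbil$ but fail in $\wbil$. Your primary route instead verifies the consecution against the global Kripke consequence $\globmodels$ and pulls it back through Theorem~\ref{thm:soundcomplkripke}. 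That argument is correct: because the premises are forced at \emph{every} world, the biconditionals between the $p_i$ and $q_i$ hold in particular at the backward witness $v\leq u$ demanded by the clause for $\excl$, which is exactly the point where a merely local assumption (hence $\wbil$) would break down. One small correction: what you need there is globality plus reflexivity of $\leq$, not persistence, which only propagates forcing upward along $\leq$ and would not reach $v$. The semantic detour buys a short, transparent argument at the price of invoking the full completeness theorem for a fact with a four-line Hilbert derivation, and it slightly obscures where the $\sbil$-specific strength enters, which the paper's syntactic proof makes explicit. Your fallback route via dual residuation (item 3 of Lemma~\ref{lem:derivations}) and $A_{11}$ is essentially the engine underneath the paper's monotonicity lemmas, so unpacking it would reproduce the paper's proof.
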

\begin{proof}
  We omit the proofs for (IL1)~(\coqdoc{Alg.sBIH_implicative.html\#sBIH_IL1}), (IL2)~(\coqdoc{Alg.sBIH_implicative.html\#sBIH_IL2}), (IL4)~(\coqdoc{Alg.sBIH_implicative.html\#sBIH_IL4}) and (IL5)~(\coqdoc{Alg.sBIH_implicative.html\#sBIH_IL5})
  as they are standard proofs only relying on intuitionistically valid steps. 
  Therefore, we focus on the property (IL3).
  
  We prove the property by cases on the identity of the operator $\lambda$. 
  Here again, if $\lambda$ is $\top$, $\bot$, $\land$,
  $\lor$ or $\to$, we resort to solely intuitionistic arguments~(\coqdoc{Alg.sBIH_implicative.html\#sBIH_IL3_Top},\coqdoc{Alg.sBIH_implicative.html\#sBIH_IL3_Bot},\coqdoc{Alg.sBIH_implicative.html\#sBIH_IL3_And},\coqdoc{Alg.sBIH_implicative.html\#sBIH_IL3_Or},\coqdoc{Alg.sBIH_implicative.html\#sBIH_IL3_Imp}).
  We are thus left with providing the details for the case where $\lambda$ is $\excl$~(\coqdoc{Alg.sBIH_implicative.html\#sBIH_IL3_Excl}),
  i.e.~we need to prove that $\sderiv{p_1\to q_1,q_1\to p_1,p_2\to q_2,q_2\to p_2}{(p_1\excl q_1)\to(p_2\excl q_2)}$.
  Consider the following, where $\Gamma:=p_1\to q_1,q_1\to p_1,p_2\to q_2,q_2\to p_2$ and
  where dashed lines are applications of admissible rules (if all their premises are provable, then so is their conclusion).
  
\begin{center}
\AxiomC{}
\RightLabel{$\GHrule{El}$}
\UnaryInfC{$\Gamma\vdash q_2\to q_1$}
\RightLabel{Right Anti-Monot.}\dashedLine
\UnaryInfC{$\Gamma\vdash(p_1\excl q_1)\to(p_1\excl q_2)$}
\AxiomC{}
\RightLabel{$\GHrule{El}$}
\UnaryInfC{$\Gamma\vdash p_1\to p_2$}
\RightLabel{Left Monot.}\dashedLine
\UnaryInfC{$\Gamma\vdash(p_1\excl q_2)\to(p_2\excl q_2)$}
\RightLabel{Imp.~Trans.}\dashedLine
\BinaryInfC{$\Gamma\vdash(p_1\excl q_1)\to(p_2\excl q_2)$}
\DisplayProof
\end{center}
As shown above, our case is straightforwardly proved via
transitivity of implication~(\coqdoc{GenHil.sBIH_meta_interactions.html\#meta_sImp_trans}), together with the fact that the exclusion
is monotone on the left~(\coqdoc{GenHil.sBIH_meta_interactions.html\#smonoR_Excl}) and anti-monotone on the right~(\coqdoc{GenHil.sBIH_meta_interactions.html\#smonoL_Excl})
\emph{in the presence of a context $\Gamma$}.
\end{proof}

As a consequence of $\sbil$ being implicative and
Proposition~\ref{prop:imparealg} we have the following.

\begin{corollary}
  $\sbil$ is finitely algebraizable with defining equation $p = \top$
  and equivalence formulas $p \leftrightarrow q$.%
  \footnote{For conciseness of notation we use the formula $p\leftrightarrow q$
  instead of $\{p \to q, q \to p\}$. This is legitimate here as in our logics we have
  $p \leftrightarrow q\dashv\vdash\{p \to q, q \to p\}$.}
\end{corollary}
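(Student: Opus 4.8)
The plan is to obtain the statement almost immediately from Theorem~\ref{prop:strongimp} together with Proposition~\ref{prop:imparealg}, and then to tidy up the witnessing data. First I would invoke Theorem~\ref{prop:strongimp} to note that $\sbil$ is implicative, so that Proposition~\ref{prop:imparealg} applies and yields that $\sbil$ is finitely algebraizable, with respect to a class $\clalg{K}$ of algebras, via the defining equation $\{p = p \to p\}$ and equivalence formulas $\{p \to q, q \to p\}$. What remains is purely a change of presentation: I must argue that these witnesses can be swapped for $p = \top$ and $p \leftrightarrow q$ without invalidating any of (ALG1)--(ALG4).

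The equivalence formulas are the easy half. Since $p \leftrightarrow q$ abbreviates $(p \to q)\land(q \to p)$ and one has the interderivability $p \leftrightarrow q \dashv\vdash \{p\to q, q\to p\}$ in $\sbil$ (as recorded in the footnote to the statement), the two families of congruence formulas are interderivable; and interderivable equivalence formulas are routinely interchangeable throughout (ALG1)--(ALG4), since $\Delta$ enters those conditions only up to provable equivalence of the formulas it produces.

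The substantive step is the change of defining equation, and this is the place I would be most careful. The key observation is that $\vdash_{\log{s}} (p \to p) \leftrightarrow \top$ holds in $\sbil$, being an intuitionistic theorem. Feeding this into (ALG2) with an empty premise set lifts it to the equational validity $\vDash_\clalg{K} (p \to p) = \top$; hence $\int{\val}(p \to p) = \int{\val}(\top)$ for every $\alg{A}\in\clalg{K}$ and every valuation $\val$. Consequently a valuation satisfies $p = p \to p$ exactly when it satisfies $p = \top$, so the two defining equations are equivalent modulo $\clalg{K}$. Because the defining equation occurs in (ALG1), (ALG3) and (ALG4) only inside assertions of the shape $\tau(\cdot)\vDash_\clalg{K}\tau(\cdot)$ or interderivabilities built from $\tau$ --- while (ALG2) does not mention it at all --- substituting $\{p=\top\}$ for $\{p = p\to p\}$ preserves every condition. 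As both replacement witnesses remain finite, this gives finite algebraizability with the stated data. I do not expect a genuine obstacle here: the only care needed is justifying that swapping equivalent witnesses is harmless, which is exactly what (ALG2) and the elementary $\sbil$-theorem $\vdash_{\log{s}}(p\to p)\leftrightarrow\top$ provide.
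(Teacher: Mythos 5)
Your proposal is correct and takes essentially the same route as the paper, which obtains the corollary directly from Theorem~\ref{prop:strongimp} (implicativity of $\sbil$) together with Proposition~\ref{prop:imparealg}. The only difference is that you explicitly justify trading the defining equation $p = p \to p$ for $p = \top$ and the pair $\{p\to q, q\to p\}$ for $p\leftrightarrow q$ --- a bookkeeping step the paper leaves implicit except for its footnote on the equivalence formulas --- and your justification of that swap is sound.
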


Next, we want to refine the statement above and exhibit a specific class of
algebras with respect to which $\sbil$ is algebraizable.
Concretely, we show that $\sbil$ is algebraizable with respect to $\biha$
with defining equation $p = \top$ and equivalence formula $p \leftrightarrow q$.
 To obtain this result, we need to show the holding of all four conditions of 
Definition~\ref{def:algebraizable}.
We first prove (ALG3) and (ALG4) in a proof-theoretic
and algebraic way, respectively.

\begin{proposition}[ALG3~\coqdoc{Alg.sBIH_algebraizable.html\#sBIH_Alg3}]\label{prop:alg3}
The following holds.
\begin{center}
\begin{tabular}{c @{\hspace{2cm}} c}
$\sderiv{\phi}{\phi\leftrightarrow\top}$ & $\sderiv{\phi\leftrightarrow\top}{\phi}$
\end{tabular}
\end{center}
\end{proposition}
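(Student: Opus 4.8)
The plan is to unfold the abbreviation $\varphi\leftrightarrow\top$ as $(\varphi\to\top)\land(\top\to\varphi)$ and to observe that both required consecutions are purely intuitionistic facts. Since $\sbil$ extends intuitionistic propositional logic---it contains all the axioms $A_1$--$A_{10}$ together with $\GHrule{MP}$ and $\GHrule{El}$---it suffices to produce Hilbert derivations using only these ingredients; the rule $\GHrule{sDN}$ plays no role here. Concretely, I would reduce each consecution to the derivability of a single theorem followed by $\GHrule{El}$ and $\GHrule{MP}$: for the left statement I aim at $\sderiv{\emptyset}{\varphi\to(\varphi\leftrightarrow\top)}$, and for the right statement at $\sderiv{\emptyset}{(\varphi\leftrightarrow\top)\to\varphi}$, then discharge $\varphi$ (resp.~$\varphi\leftrightarrow\top$) via $\GHrule{El}$ and detach.

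For $\sderiv{\varphi}{\varphi\leftrightarrow\top}$ the two conjuncts are obtained separately. The conjunct $\varphi\to\top$ is literally an instance of $A_{10}$, so $\sderiv{\varphi}{\varphi\to\top}$ holds. For the conjunct $\top\to\varphi$, the instance $\varphi\to(\top\to\varphi)$ of $A_1$, combined with the premise $\varphi$ (available by $\GHrule{El}$) and one $\GHrule{MP}$, yields $\sderiv{\varphi}{\top\to\varphi}$. I would then close with the standard conjunction-introduction rule, derivable from $A_1$, $A_8$ and $\GHrule{MP}$ in the intuitionistic fragment shared by both logics, to assemble $(\varphi\to\top)\land(\top\to\varphi)$.

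For $\sderiv{\varphi\leftrightarrow\top}{\varphi}$ I first need the theorem $\sderiv{\emptyset}{\top}$: the formula $\bot\to\bot$ is an instance of $A_9$, hence derivable, and $A_{10}$ gives $(\bot\to\bot)\to\top$, so $\GHrule{MP}$ delivers $\top$. From the premise, the instance $((\varphi\to\top)\land(\top\to\varphi))\to(\top\to\varphi)$ of $A_7$ together with $\GHrule{El}$ and $\GHrule{MP}$ gives $\top\to\varphi$, and a final $\GHrule{MP}$ against $\top$ yields $\varphi$.

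I do not anticipate a genuine obstacle, as the entire argument lives in the intuitionistic fragment and could alternatively be justified by noting that the logic $\wbil$ already derives both consecutions (using its full deduction theorem, Lemma~\ref{lem:derivations}.4) and that $\wbil\subseteq\sbil$. The only points requiring a little care are the auxiliary theorem $\sderiv{\emptyset}{\top}$ and the admissibility of conjunction introduction; both are standard, and the temptation to invoke $\GHrule{sDN}$ or the modified deduction theorem of Lemma~\ref{lem:derivations}.5 should simply be resisted.
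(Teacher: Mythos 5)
Your proposal is correct and follows essentially the same route as the paper: both unfold $\phi\leftrightarrow\top$, derive the two conjuncts via $A_1$, $\GHrule{El}$ and $\GHrule{MP}$ (plus conjunction introduction), and obtain the converse direction from $\vdash\top$ together with the projection axiom $A_7$ and two detachments. The only cosmetic difference is that you get $\phi\to\top$ directly as an instance of $A_{10}$, whereas the paper routes it through $\phi\vdash\top$ and an $A_1$ instance; both arguments live entirely in the shared intuitionistic fragment, as you observe.
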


\begin{proof}
Consider the proofs below, and note that the two topmost ones establish $\sderiv{\phi}{\phi\leftrightarrow\top}$.
\begin{center}
\begin{tabular}{c @{\hspace{1.4cm}} c @{\hspace{1.4cm}}}
\AxiomC{}
\RightLabel{Lem.~\coqdoc{GenHil.sBIH_meta_interactions.html\#sprv_Top}}\dashedLine
\UnaryInfC{$\phi\vdash\top$}
\AxiomC{}
\RightLabel{\GHrule{Ax}}
\UnaryInfC{$\phi\vdash\top\to\phi\to\top$}
\RightLabel{\GHrule{MP}}
\BinaryInfC{$\phi\vdash\phi\to\top$}
\DisplayProof
& 
\AxiomC{}
\RightLabel{\GHrule{El}}
\UnaryInfC{$\phi\vdash\phi$}
\AxiomC{}
\RightLabel{\GHrule{Ax}}
\UnaryInfC{$\phi\vdash\phi\to\top\to\phi$}
\RightLabel{\GHrule{MP}}
\BinaryInfC{$\phi\vdash\top\to\phi$}
\DisplayProof
\\
& \\
\multicolumn{2}{c}{
\AxiomC{}
\RightLabel{Lem.~\coqdoc{GenHil.sBIH_meta_interactions.html\#sprv_Top}}\dashedLine
\UnaryInfC{$\phi\leftrightarrow\top\vdash\top$}
\AxiomC{}
\RightLabel{\GHrule{El}}
\UnaryInfC{$\phi\leftrightarrow\top\vdash\phi\leftrightarrow\top$}
\AxiomC{}
\RightLabel{\GHrule{Ax}}
\UnaryInfC{$\phi\leftrightarrow\top\vdash(\phi\leftrightarrow\top)\to(\top\to\phi)$}
\RightLabel{\GHrule{MP}}
\BinaryInfC{$\phi\leftrightarrow\top\vdash\top\to\phi$}
\RightLabel{\GHrule{MP}}
\BinaryInfC{$\phi\leftrightarrow\top\vdash\phi$}
\DisplayProof}
\end{tabular}
\end{center}
\end{proof}

\begin{proposition}[ALG4~\coqdoc{Alg.sBIH_algebraizable.html\#sBIH_Alg4}]\label{prop:alg4}
The following holds.
\begin{center}
\begin{tabular}{l @{\hspace{1.4cm}} c}
$\epsilon=\delta\vDash_{\biha}(\epsilon\leftrightarrow\delta) = \top$ & $(\epsilon\leftrightarrow\delta) = \top\vDash_{\biha}\epsilon = \delta$
\end{tabular}
\end{center}
\end{proposition}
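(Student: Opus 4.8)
The plan is to unfold Definition~\ref{def:eqcsq} of equational consequence together with the abbreviation $\epsilon\leftrightarrow\delta:=(\epsilon\to\delta)\land(\delta\to\epsilon)$, thereby reducing both entailments to pointwise statements about elements of an arbitrary bi-Heyting algebra. Fix $\alg{A}\in\biha$ and a valuation $\val$, and write $a:=\int{\val}(\epsilon)$ and $b:=\int{\val}(\delta)$, so that $\int{\val}(\epsilon\leftrightarrow\delta)=(a\ato b)\aland(b\ato a)$ and $\int{\val}(\top)=\atop$. Each direction then becomes a claim relating the equation $a=b$ to the equation $(a\ato b)\aland(b\ato a)=\atop$.

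For the left-to-right entailment I would assume $a=b$; then $(a\ato b)\aland(b\ato a)=a\ato a$, and in any Heyting algebra $a\ato a=\atop$, which follows from (ResI) since $\atop\aland a=a\leq a$ yields $\atop\leq a\ato a$ and $\atop$ is the top element. As $\alg{A}$ and $\val$ were arbitrary, this gives $\epsilon=\delta\vDash_{\biha}(\epsilon\leftrightarrow\delta)=\top$. For the right-to-left entailment I would assume $(a\ato b)\aland(b\ato a)=\atop$. Since $x\aland y\leq x$ and $x\aland y\leq y$, a meet equal to the top element forces both factors to equal $\atop$; hence $a\ato b=\atop$ and $b\ato a=\atop$. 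The one lemma needed is the standard characterisation $x\ato y=\atop\Leftrightarrow x\leq y$, which I would derive from (ResI): $\atop\leq x\ato y$ iff $\atop\aland x\leq y$ iff $x\leq y$, using $\atop\aland x=x$. Applying this to both conjuncts gives $a\leq b$ and $b\leq a$, so antisymmetry of $\leq$ delivers $a=b$, i.e.~$(\epsilon\leftrightarrow\delta)=\top\vDash_{\biha}\epsilon=\delta$.

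I do not expect a genuine obstacle here: the content is purely (bi-)Heyting-algebraic, and every ingredient — that $\atop$ is the top element, that a meet equals $\atop$ only when both factors do, and the residuation characterisation of $\leq$ — is immediate from Definition~\ref{def:ha-biha}. Notably the exclusion operator $\aexcl$ plays no role, so the argument is really the familiar one for intuitionistic logic and Heyting algebras. The only point requiring a little care is the bookkeeping of Definition~\ref{def:eqcsq}, namely verifying that the quantification over all $\alg{A}\in\biha$ and all $\val$ reduces cleanly to the element-level reasoning above, and that the premise set (a single equation) is handled correctly in each direction.
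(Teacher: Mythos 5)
Your proposal is correct and follows essentially the same route as the paper: both directions are proved pointwise in an arbitrary bi-Heyting algebra, using $a\ato a=\atop$ for the first entailment and (ResI), (P1), the properties of $\aland$, and antisymmetry of $\leq$ for the second. The only cosmetic difference is that the paper splits the first entailment into the two conjuncts $(\epsilon\to\delta)=\top$ and $(\delta\to\epsilon)=\top$ rather than collapsing the meet to $a\ato a$ directly, which changes nothing of substance.
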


\begin{proof}
To prove $\epsilon=\delta\vDash_{\biha}(\epsilon\leftrightarrow\delta) = \top$ it suffices
to prove $\epsilon=\delta\vDash_{\biha}(\epsilon\to\delta) = \top$ and 
$\epsilon=\delta\vDash_{\biha}(\delta\to\epsilon) = \top$.
We provide the proof of the former but omit the one of the latter, as it is symmetric.
Let $\alg{A}\in\biha$ and $\val$ a valuation over $\alg{A}$. 
Assume that $\int{\val}(\epsilon)=\int{\val}(\delta)$. 
We need to show $\int{\val}(\epsilon)\ato\int{\val}(\delta)=\atop$. Under our assumption it
suffices to show $\int{\val}(\epsilon)\ato\int{\val}(\epsilon)=\atop$, which is straightforwardly provable.

We are left to prove $(\epsilon\leftrightarrow\delta) = \top\vDash_{\biha}\epsilon = \delta$.
Let $\alg{A}\in\biha$ and $\val$ a valuation over $\alg{A}$. 
Assume that $\int{\val}(\epsilon)\aleftrightarrow\int{\val}(\delta)=\atop$.
 By the antisymmetry of $\leq$, $\int{\val}(\epsilon)=\int{\val}(\delta)$ can be proved by
showing $\int{\val}(\epsilon)\leq\int{\val}(\delta)$ and  $\int{\val}(\epsilon)\geq\int{\val}(\delta)$.
In turn, these are easily obtained via our assumption, $(ResI)$, (P1) and the properties of $\aland$.
\end{proof}

What remains to be shown is that conditions (ALG1) and (ALG2) hold. To prove
these we proceed with a traditional algebraic soundness and completeness result
relying on a Lindenbaum-Tarski algebra.

For soundness it suffices to show that the axioms and rules of $\sbil$
all hold on bi-Heyting algebras.

\begin{proposition}[Soundness~\coqdoc{Alg.alg_soundness.html\#alg_soundness_sBIH}]\label{prop:sound}
If $\sderiv{\Gamma}{\phi}$ then $\{\gamma=\top\mid\gamma\in\Gamma\}\vDash_{\biha}\phi=\top$.
\end{proposition}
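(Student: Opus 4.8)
The plan is to prove this by induction on the derivation witnessing $\sderiv{\Gamma}{\phi}$. First I would unfold the goal through Definition~\ref{def:eqcsq}: fixing an arbitrary $\alg{A}\in\biha$ and valuation $\val$ over $\alg{A}$, the conclusion $\{\gamma=\top\mid\gamma\in\Gamma\}\vDash_{\biha}\phi=\top$ amounts to the implication that, whenever $\int{\val}(\gamma)=\atop$ for every $\gamma\in\Gamma$, we have $\int{\val}(\phi)=\atop$. So I assume $\int{\val}(\gamma)=\atop$ for all $\gamma\in\Gamma$ and aim to establish $\int{\val}(\phi)=\atop$, proceeding by cases on the last rule applied. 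Before the case analysis I would record three algebraic facts flowing from the residuation laws, used throughout: in any bi-Heyting algebra $a\ato b=\atop$ iff $a\leq b$ (by (ResI), since $\atop\aland a=a$), dually $a\aexcl b=\abot$ iff $a\leq b$ (by (ResE) and (P2)), and $\abot\ato c=\atop$ for every $c$.

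With these in hand the leaf case (El) is immediate, as $\phi\in\Gamma$ gives $\int{\val}(\phi)=\atop$ by assumption, and the two inductive cases are short. For \GHrule{MP}, the induction hypotheses yield $\int{\val}(\psi)=\atop$ and $\int{\val}(\psi\to\psi')=\int{\val}(\psi)\ato\int{\val}(\phi)=\atop$; the first fact then gives $\int{\val}(\psi)\leq\int{\val}(\phi)$, and since $\int{\val}(\psi)=\atop$ we conclude $\int{\val}(\phi)=\atop$. For \GHrule{sDN}, recalling $\DN\psi=(\top\excl\psi)\to\bot$, the hypothesis $\int{\val}(\psi)=\atop$ lets us compute $\atop\aexcl\atop=\abot$ (second fact) and then $\abot\ato\abot=\atop$ (third fact), so $\int{\val}(\DN\psi)=\atop$.

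The remaining and principal obstacle is the \GHrule{Ax} case, which requires verifying that every axiom instance $A_1$--$A_{14}$ is interpreted as $\atop$ in every bi-Heyting algebra. By the first fact, validating an axiom of shape $\alpha\to\beta$ reduces to the order inequality $\int{\val}(\alpha)\leq\int{\val}(\beta)$. For $A_1$--$A_{10}$ these are the standard Heyting-algebra validities, provable from the bounded-lattice equations and (ResI) exactly as in the intuitionistic case, so I would spell out only $A_{11}$--$A_{14}$, which involve $\aexcl$. Here $A_{11}$ is precisely Lemma~\ref{lem:bi-props}.2, namely $a\leq b\alor(a\aexcl b)$; for $A_{13}$ I would prove $(a\aexcl b)\aexcl c\leq a\aexcl(b\alor c)$ by applying (ResE) twice together with associativity of $\alor$, reducing the goal to the instance $a\leq(b\alor c)\alor\bigl(a\aexcl(b\alor c)\bigr)$ of (ResE); and $A_{12}$, $A_{14}$ (involving $\awneg$ and $\aneg$, i.e.\ $\atop\aexcl(-)$ and $(-)\ato\abot$) follow after unfolding the abbreviations by combining (ResE), (ResI), distributivity of the lattice, and the Heyting modus-ponens inequality $a\aland(a\ato b)\leq b$, with Lemma~\ref{lem:bi-props}.2 again supplying the key step for $A_{14}$. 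None of these introduce a genuinely new difficulty beyond careful bookkeeping of the exclusion axioms, so I expect the write-up to consist mainly of the \GHrule{Ax} computations once the residuation facts are isolated.
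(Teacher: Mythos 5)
Your proposal is correct and follows essentially the same route as the paper: induction on the derivation, with the only real work being the verification that $A_{11}$--$A_{14}$ are interpreted as $\atop$ in any bi-Heyting algebra via the residuation laws and Lemma~\ref{lem:bi-props}, and with the \GHrule{MP} and \GHrule{sDN} cases handled identically. Your sketched derivations of $A_{12}$ and $A_{14}$ (via distributivity and Lemma~\ref{lem:bi-props}.2 rather than the paper's use of $\awneg\awneg a\leq a$, $a\aexcl b\leq a\aland\awneg b$ and double-negation introduction) are valid minor variants of the same computation.
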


\begin{proof}
We reason by induction on the structure of the proof of $\Gamma\vdash\phi$.
The case for $\GHrule{El}$ is straightforward. 
The case for $\GHrule{Ax}$ consists in showing that in any bi-Heyting algebra all
axioms are interpreted as $\atop$~(\coqdoc{Alg.alg_soundness.html\#Axioms_one}). Using the antisymmetry of $\leq$
and the fact that $\atop$ is a top element, it is sufficient to show
all axioms are greater than $\atop$. The proof for all intuitionistic axioms are
as usual~(\coqdoc{Alg.Bi_Heyting_Algebras.html\#alg_A1},\coqdoc{Alg.Bi_Heyting_Algebras.html\#alg_A2},\coqdoc{Alg.Bi_Heyting_Algebras.html\#alg_A3},\coqdoc{Alg.Bi_Heyting_Algebras.html\#alg_A4},\coqdoc{Alg.Bi_Heyting_Algebras.html\#alg_A5},\coqdoc{Alg.Bi_Heyting_Algebras.html\#alg_A6},\coqdoc{Alg.Bi_Heyting_Algebras.html\#alg_A7},\coqdoc{Alg.Bi_Heyting_Algebras.html\#alg_A8},\coqdoc{Alg.Bi_Heyting_Algebras.html\#alg_A9},\coqdoc{Alg.Bi_Heyting_Algebras.html\#alg_A10}).
We are thus left with the bi-intuitionistic axioms $A_{11}$ to $A_{14}$ from Figure \ref{fig:Hilbert}, 
which we prove by instead proving the properties they correspond to in an algebra $\alg{A}\in\biha$.
We start with $A_{11}$~(\coqdoc{Alg.Bi_Heyting_Algebras.html\#alg_BA1}).
\begin{center}
\begin{tabular}{r c l l}
$a$ & $\leq$ & $b \alor (a\aexcl b)$ & (2 of Lem.~\ref{lem:bi-props}) \\
$a$ & $\leq$ & $(a\aexcl b) \alor b$ & (comm.~$\alor$) \\
$\atop\aland a$ & $\leq$ & $(a\aexcl b) \alor b$ & (absorp.~$\atop$) \\
$\atop$ & $\leq$ & $a\ato(a\aexcl b) \alor b$ & (ResI) \\
\end{tabular}
\end{center}
Then, we consider $A_{12}$~(\coqdoc{Alg.Bi_Heyting_Algebras.html\#alg_BA2}).
\begin{center}
\begin{tabular}{r c l l}
$\awneg\awneg(a\ato b)$ & $\leq$ & $a \ato b$ & (1 of Lem.~\ref{lem:bi-props}) \\
$a\aland\awneg\awneg(a\ato b)$ & $\leq$ & $a\aland (a \ato b)$ & (monot.~$\aland$) \\
$a\aexcl\awneg(a\ato b)$ & $\leq$ & $a\aland (a \ato b)$ & (trans.~$\leq$ and 3 of Lem.~\ref{lem:bi-props}) \\
$a\aexcl\awneg(a\ato b)$ & $\leq$ & $b$ & (trans.~$\leq$ and modus ponens~\coqdoc{Alg.Bi_Heyting_Algebras.html\#mp}) \\
$a$ & $\leq$ & $\awneg(a\ato b) \alor b$ & (ResE) \\
$a\aexcl b$ & $\leq$ & $\awneg(a\ato b)$ & (ResE) \\
$\atop\aland (a\aexcl b)$ & $\leq$ & $\awneg(a\ato b)$ & (absorp.~$\atop$) \\
$\atop$ & $\leq$ & $(a\aexcl b)\ato\awneg(a\ato b)$ & (ResI) \\
\end{tabular}
\end{center}
Next, we turn to $A_{13}$~(\coqdoc{Alg.Bi_Heyting_Algebras.html\#alg_BA3}).
\begin{center}
\begin{tabular}{r c l l}
$a$ & $\leq$ & $(b\alor c)\alor (a\aexcl(b\alor c))$ & (1 of Lem.~\ref{lem:bi-props}) \\
$(a\aexcl b)\aexcl c$ & $\leq$ & $a\aexcl(b\alor c)$ & (ResE) \\
$\atop\aland ((a\aexcl b)\aexcl c)$ & $\leq$ & $a\aexcl(b\alor c)$ & (absorp.~$\atop$) \\
$\atop$ & $\leq$ & $((a\aexcl b)\aexcl c)\ato(a\aexcl(b\alor c))$ & (ResI) \\
\end{tabular}
\end{center}
The last axiom to consider is $A_{14}$~(\coqdoc{Alg.Bi_Heyting_Algebras.html\#alg_BA4}).
\begin{center}
\begin{tabular}{r c l l}
$a\aexcl b$ & $\leq$ & $\aneg\aneg(a\aexcl b)$ & (double negation introduction~\coqdoc{Alg.Bi_Heyting_Algebras.html\#double_neg}) \\
$a$ & $\leq$ & $\aneg\aneg(a\aexcl b)\alor b$ & (ResE) \\
$a$ & $\leq$ & $\aneg(a\aexcl b)\ato b$ & (As $\aneg p \alor q \leq p \ato q$)) \\
$\aneg(a\aexcl b)\aland a$ & $\leq$ & $b$ & (ResI) \\
$(\atop\aland \aneg(a\aexcl b))\aland a$ & $\leq$ & $b$ & (absorp.~$\atop$) \\
$\atop$ & $\leq$ & $\aneg(a\aexcl b)\ato(a\ato b)$ & (ResI) \\
\end{tabular}
\end{center}

With all axioms considered, we are left with the case where the proof was
built using $\GHrule{MP}$ or $\GHrule{sDN}$. We focus on the latter as 
the former is as in the intuitionistic case.
Assume that $\int{\val}(\phi) = \atop$, then, as $\atop \aexcl \atop = \abot$, $\atop \aexcl \int{\val}(\phi) = \abot$,
hence, as $\abot \ato \abot = \atop$, $\awneg \int{\val}(\phi) \ato \abot = \atop$ and
so $\int{\val}(\neg\wneg \phi) = \atop$. 
\end{proof}

To prove completeness we use \emph{Lindenbaum-Tarski algebras}, i.e.~algebras whose
points are classes of formulas which are \emph{$\log{i}$-equivalent modulo $\Gamma$}
for a given $\Gamma$.

\begin{definition}
Let $\{\phi,\psi\}\cup\Gamma\subseteq\form$ and $i\in\{w,s\}$.
We say that $\phi$ and $\psi$ \emph{$\log{i}$-equivalent modulo $\Gamma$} if
$\ideriv{\Gamma}{\phi\leftrightarrow\psi}$.
The set of formulas $\log{i}$-equivalent modulo $\Gamma$ to $\phi$ is denoted
by $\eqprv{\phi}_{\Gamma}^{\log{i}}$~(\coqdoc{Alg.sBIH_alg_completeness.html\#eqprv}).
The class of all sets $\eqprv{\phi}_{\Gamma}^{\log{i}}$ is denoted
$\eqcla{i}{\Gamma}$.
When $\log{i}$ and $\Gamma$ are clear from context, we drop their mention and
talk about \emph{equivalent} formulas and write $\eqprv{\phi}$ and $\eqcla{}{}$.
\end{definition}

For the remaining of our discussion of the completeness proof let us 
fix a set of formulas $\Gamma$ and let us reiterate that $\sbil$ is the logic
we focus on.
Given that we want our Lindenbaum-Tarski algebra to a be bi-Heyting algebra, we need to
define an operator on equivalent classes for each and every algebraic operator
satisfying adequate properties.

Quite naturally, one would want to use logical connectives on representatives
of equivalence classes. 
For example, we can define the algebraic operator $\aland$
in our Lindenbaum-Tarski algebra as taking two equivalence classes $\eqprv{\phi}$ and
$\eqprv{\psi}$, with respective representative $\phi$ and $\psi$, and giving $\eqprv{\phi\land\psi}$.
Through this approach one needs to prove that the choice of representative does not
matter: by taking any $\phi'\in\eqprv{\phi}$ and $\psi'\in\eqprv{\psi}$, we must have
$\eqprv{\phi'\land\psi'}=\eqprv{\phi\land\psi}$.
While this result can be proved for all intuitionistic connectives in both $\wbil$ and 
$\sbil$~(\coqdoc{Alg.sBIH_alg_completeness.html\#epone},\coqdoc{Alg.sBIH_alg_completeness.html\#epzero},\coqdoc{Alg.sBIH_alg_completeness.html\#epjoin},\coqdoc{Alg.sBIH_alg_completeness.html\#epmeet},\coqdoc{Alg.sBIH_alg_completeness.html\#eprpc}), interestingly we can
only prove this result for $\excl$ in $\sbil$~(\coqdoc{Alg.sBIH_alg_completeness.html\#epsubtr}) using the left monotonicity and right anti-monotonicity of $\excl$.%
\footnote{Crucially, left monotonicity and right anti-monotonicity of $\excl$ only hold in $\wbil$
when the left-hand side of the consecution is the empty set.
We exploit this to find a counterexample in $\wbil$ by taking $\Gamma:=\{p_1,p_2,q_1,q_2\}$.
It then suffices to notice that $p_1$ and $p_2$ are $\log{w}$-equivalent modulo $\Gamma$, 
that $q_1$ and $q_2$ also are, but that $\nowderiv{\Gamma}{(p_1\excl q_1)\to(p_2\excl q_2)}$.}

With these algebraic operators defined, one can show that the resulting algebraic
structure is indeed a bi-Heyting algebra.
More precisely, this structure satisfies all properties of 
bounded lattices (Definition~\ref{def:boundlat}: \coqdoc{Alg.sBIH_alg_completeness.html\#epjcomm},\coqdoc{Alg.sBIH_alg_completeness.html\#epassoc},\coqdoc{Alg.sBIH_alg_completeness.html\#epjabsorp},\coqdoc{Alg.sBIH_alg_completeness.html\#epmcomm},\coqdoc{Alg.sBIH_alg_completeness.html\#epmassoc},\coqdoc{Alg.sBIH_alg_completeness.html\#epmabsorp}, \coqdoc{Alg.sBIH_alg_completeness.html\#eplowest},\coqdoc{Alg.sBIH_alg_completeness.html\#epjcomm})
and the residuation properties of bi-Heyting algebras (Definition~\ref{def:ha-biha}: \coqdoc{Alg.sBIH_alg_completeness.html\#epresiduation},\coqdoc{Alg.sBIH_alg_completeness.html\#epdresiduation}).%
\footnote{Note that to prove the residuation of $\excl$, we need to use the item (3) of Lemma~\ref{lem:derivations},
which does not hold for $\wbil$.}
We can therefore define our Lindenbaum-Tarski algebra as a bi-Heyting algebra.

\begin{definition}[Lindenbaum-Tarski algebra]\label{def:LTalg}
We define $\LT{s}{\Gamma}=(\eqcla{s}{\Gamma},\atop,\abot,\aland,\alor,\ato,\aexcl)$
to be the bi-Heyting algebra with the operators defined below.
\begin{center}
\begin{tabular}{c @{\hspace{1.5cm}} c @{\hspace{1.5cm}} c}
$\atop:=\eqprv{\top}_\Gamma$ &
$\eqprv{\phi}_\Gamma\aland\,\eqprv{\psi}_\Gamma:=\eqprv{\phi\land\psi}_\Gamma$ &
$\eqprv{\phi}_\Gamma\ato\,\eqprv{\psi}_\Gamma:=\eqprv{\phi\to\psi}_\Gamma$ \\
$\abot:=\eqprv{\bot}_\Gamma$ &
$\eqprv{\phi}_\Gamma\alor\,\eqprv{\psi}_\Gamma:=\eqprv{\phi\lor\psi}_\Gamma$ &
$\eqprv{\phi}_\Gamma\aexcl\,\eqprv{\psi}_\Gamma:=\eqprv{\phi\excl\psi}_\Gamma$
\end{tabular}
\end{center}
\end{definition}

A crucial property of the Lindenbaum-Tarski algebra $\LT{s}{\Gamma}$ is that
formulas are interpreted as their equivalence class under the valuation mapping
propositional variables to their equivalence class.

\begin{lemma}[\coqdoc{Alg.sBIH_alg_completeness.html\#LindAlgrepres}]\label{lem:algtrulem}
Let $\val_{\scriptscriptstyle\LT{}{}}:Prop\to\eqcla{}{}$ be such that $\val_{\scriptscriptstyle\LT{}{}}(p)=\eqprv{p}$. We have that $\int{\val}_{\scriptscriptstyle\LT{}{}}(\phi)=\eqprv{\phi}$.
\end{lemma}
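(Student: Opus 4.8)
The plan is to prove the statement by a routine structural induction on the formula $\phi$, using only the recursive definition of the interpretation $\int{\val}$ together with the definition of the operators of $\LT{s}{\Gamma}$ given in Definition~\ref{def:LTalg}. The key observation is that the algebra has been set up precisely so that each syntactic connective is mirrored by the corresponding algebraic operator acting on equivalence classes; consequently the induction should go through mechanically, with each inductive step amounting to a single appeal to the relevant clause of Definition~\ref{def:LTalg}.

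First I would dispatch the base cases. When $\phi=p$ is a propositional variable, we have $\int{\val}_{\scriptscriptstyle\LT{}{}}(p)=\val_{\scriptscriptstyle\LT{}{}}(p)=\eqprv{p}$ directly by the definition of the interpretation and of the chosen valuation $\val_{\scriptscriptstyle\LT{}{}}$. The nullary operators are handled similarly: since $\top$ and $\bot$ are interpreted as the constants of the algebra, Definition~\ref{def:LTalg} gives $\int{\val}_{\scriptscriptstyle\LT{}{}}(\top)=\atop=\eqprv{\top}$ and $\int{\val}_{\scriptscriptstyle\LT{}{}}(\bot)=\abot=\eqprv{\bot}$.

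For the inductive step, let $\lambda$ be any of the binary connectives $\land,\lor,\to,\excl$, write $\alambda$ for the corresponding operator of $\LT{s}{\Gamma}$, and suppose as induction hypothesis that $\int{\val}_{\scriptscriptstyle\LT{}{}}(\phi_i)=\eqprv{\phi_i}$ for $i=1,2$. Then
\[
  \int{\val}_{\scriptscriptstyle\LT{}{}}(\phi_1\,\lambda\,\phi_2)
  = \alambda\bigl(\int{\val}_{\scriptscriptstyle\LT{}{}}(\phi_1)\bigr)\bigl(\int{\val}_{\scriptscriptstyle\LT{}{}}(\phi_2)\bigr)
  = \eqprv{\phi_1}\,\alambda\,\eqprv{\phi_2}
  = \eqprv{\phi_1\,\lambda\,\phi_2},
\]
where the first equality is the definition of the interpretation, the second is the induction hypothesis, and the third is the defining clause for $\alambda$ in Definition~\ref{def:LTalg}. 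Since $\lambda$ ranges over all four binary connectives, this completes the induction.

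I do not expect a genuine obstacle in this argument: the lemma is essentially a corollary of the way $\LT{s}{\Gamma}$ was constructed. The only subtlety worth flagging is that every use of Definition~\ref{def:LTalg} silently relies on the well-definedness of the operators, i.e.~on representative-independence of the operations on equivalence classes. This is where the real content lies, but that work has already been discharged when establishing that $\LT{s}{\Gamma}$ is a genuine bi-Heyting algebra — and it is exactly the point (noted earlier) at which $\sbil$ behaves well for $\excl$ whereas $\wbil$ does not. Given that groundwork, the truth lemma itself is immediate.
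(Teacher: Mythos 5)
Your proof is correct and follows exactly the paper's argument: a structural induction on $\phi$ in which each case is discharged by the definitional equalities of Definition~\ref{def:LTalg}. Your closing remark about representative-independence being the real content (already discharged when showing $\LT{s}{\Gamma}$ is a bi-Heyting algebra, and the point where $\sbil$ and $\wbil$ diverge for $\excl$) matches the paper's own surrounding discussion.
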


\begin{proof}
The proof is straightforward:
it goes by induction on the structure of $\phi$, and relies on 
definitional equalities to go through, like in the following $\excl$ case:
$\int{\val}_{\scriptscriptstyle\LT{}{}}(\psi\excl\chi)=\int{\val}_{\scriptscriptstyle\LT{}{}}(\psi)\aexcl\int{\val}_{\scriptscriptstyle\LT{}{}}(\chi)=\eqprv{\psi\excl\chi}$.
\end{proof}

With this result in hand, we can easily prove completeness.

\begin{proposition}[Completeness~\coqdoc{Alg.sBIH_alg_completeness.html\#alg_completeness_sBIH}]\label{prop:compl}
If $\{\gamma=\top\mid\gamma\in\Gamma\}\vDash_{\biha}\phi=\top$ then $\sderiv{\Gamma}{\phi}$.
\end{proposition}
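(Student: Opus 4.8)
The plan is to instantiate the equational consequence at the Lindenbaum--Tarski algebra $\LT{s}{\Gamma}$ together with the canonical valuation $\val_{\scriptscriptstyle\LT{}{}}$ of Lemma~\ref{lem:algtrulem}. Since $\LT{s}{\Gamma}$ is a bi-Heyting algebra it lies in $\biha$, so the hypothesis $\{\gamma=\top\mid\gamma\in\Gamma\}\vDash_{\biha}\phi=\top$ may be applied to this particular algebra and valuation. The whole argument then reduces to two bookkeeping steps: checking that the premises of that equational consequence are satisfied here, and reading off provability from its conclusion.

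First I would verify that $\val_{\scriptscriptstyle\LT{}{}}$ satisfies every equation in $\{\gamma=\top\mid\gamma\in\Gamma\}$. Fix $\gamma\in\Gamma$. By Lemma~\ref{lem:algtrulem} we have $\int{\val}_{\scriptscriptstyle\LT{}{}}(\gamma)=\eqprv{\gamma}_\Gamma$ and $\int{\val}_{\scriptscriptstyle\LT{}{}}(\top)=\eqprv{\top}_\Gamma=\atop$. Now $\sderiv{\Gamma}{\gamma}$ holds by $\GHrule{El}$, while Proposition~\ref{prop:alg3} gives $\sderiv{\gamma}{\gamma\leftrightarrow\top}$; combining these two via cut (C) yields $\sderiv{\Gamma}{\gamma\leftrightarrow\top}$. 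Thus $\gamma$ and $\top$ are $\log{s}$-equivalent modulo $\Gamma$, so $\eqprv{\gamma}_\Gamma=\eqprv{\top}_\Gamma=\atop$, i.e.~$\int{\val}_{\scriptscriptstyle\LT{}{}}(\gamma)=\int{\val}_{\scriptscriptstyle\LT{}{}}(\top)$, as required.

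With the premises discharged, the instantiated hypothesis yields $\int{\val}_{\scriptscriptstyle\LT{}{}}(\phi)=\int{\val}_{\scriptscriptstyle\LT{}{}}(\top)$. By Lemma~\ref{lem:algtrulem} this reads $\eqprv{\phi}_\Gamma=\eqprv{\top}_\Gamma$, so $\phi$ and $\top$ are $\log{s}$-equivalent modulo $\Gamma$, i.e.~$\sderiv{\Gamma}{\phi\leftrightarrow\top}$. Finally, from the second half of Proposition~\ref{prop:alg3}, namely $\sderiv{\phi\leftrightarrow\top}{\phi}$, and $\sderiv{\Gamma}{\phi\leftrightarrow\top}$, one concludes $\sderiv{\Gamma}{\phi}$ by cut, completing the proof.

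No genuine obstacle remains at this stage: all the substantive work lies in the construction preceding the statement, namely establishing that $\LT{s}{\Gamma}$ really is a bi-Heyting algebra (so that it belongs to $\biha$) and proving the representation Lemma~\ref{lem:algtrulem}. The only point that demands a little care is the translation between the algebraic equality $\eqprv{\gamma}_\Gamma=\atop$ and the syntactic statement $\sderiv{\Gamma}{\gamma\leftrightarrow\top}$, which must be used consistently both when discharging the premises and when reading off the final conclusion.
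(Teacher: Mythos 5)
Your proof is correct and takes essentially the same route as the paper: both instantiate the equational consequence at the Lindenbaum--Tarski algebra $\LT{s}{\Gamma}$ with the canonical valuation and rely on Lemma~\ref{lem:algtrulem} to identify interpretations with equivalence classes. The only difference is presentational --- the paper argues by contraposition (assuming $\nosderiv{\Gamma}{\phi}$ and exhibiting $\LT{s}{\Gamma}$ as a countermodel to the equational consequence), whereas you argue directly and close with (ALG3) and cut; the two are interchangeable here.
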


\begin{proof}
We prove the contrapositive. So, assume $\nosderiv{\Gamma}{\phi}$.
By Lemma~\ref{lem:algtrulem} we get that $\int{\val}_{\scriptscriptstyle\LT{}{}}(\phi)=\eqprv{\phi}$ and $\int{\val}_{\scriptscriptstyle\LT{}{}}(\gamma)=\eqprv{\gamma}$
for all $\gamma\in\Gamma$.
Clearly, we have for any such $\gamma$ that $\eqprv{\gamma}=\eqprv{\top}=\atop$ as $\sderiv{\Gamma}{\gamma\leftrightarrow\top}$.
However, we also have that $\nosderiv{\Gamma}{\top\rightarrow\phi}$ because of our assumption $\nosderiv{\Gamma}{\phi}$.
Consequently, we must have that $\eqprv{\phi}\neq\eqprv{\top}$.
Therefore, we exhibited a bi-Heyting algebra and an interpretation on it witnessing
$\{\gamma=\top\mid\gamma\in\Gamma\}\not\vDash_{\biha}\phi=\top$
\end{proof}

As promised, using soundness and completeness for $\sbil$ we can straightforwardly
prove the remaining conditions of algebraizability (ALG1) and (ALG2).
In fact, (ALG1) is nothing but the combination of soundness and completeness.

\begin{proposition}[ALG1~\coqdoc{Alg.sBIH_algebraizable.html\#sBIH_Alg1}]\label{prop:alg3}
The following holds.
\begin{center}
\begin{tabular}{c @{\hspace{1.4cm}} c @{\hspace{1.4cm}} c}
$\sderiv{\Gamma}{\phi}$ & if and only if & $\{\gamma=\top\mid\gamma\in\Gamma\}\vDash_{\biha}\phi=\top$
\end{tabular}
\end{center}
\end{proposition}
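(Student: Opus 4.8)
The plan is to observe that this biconditional is nothing more than the conjunction of the soundness and completeness results already established for $\sbil$, so the proof consists entirely in stitching these two halves together. For the left-to-right direction I would invoke soundness: Proposition~\ref{prop:sound} states precisely that $\sderiv{\Gamma}{\phi}$ implies $\{\gamma=\top\mid\gamma\in\Gamma\}\vDash_{\biha}\phi=\top$. For the right-to-left direction I would invoke completeness: Proposition~\ref{prop:compl} states precisely the converse implication. Chaining the two yields the desired equivalence, with no further calculation required.

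Consequently, there is no real obstacle remaining at this point — all the difficulty has been front-loaded into the two propositions being combined. In retrospect, the two genuinely nontrivial components were, on the soundness side, the verification that the bi-intuitionistic axioms $A_{11}$ to $A_{14}$ and the rule $\GHrule{sDN}$ are validated in every bi-Heyting algebra, which leaned on the residuation law $(ResE)$ together with the auxiliary facts of Lemma~\ref{lem:bi-props}; and, on the completeness side, the construction of the Lindenbaum--Tarski algebra $\LT{s}{\Gamma}$ together with the proof that $\aexcl$ is well-defined on $\sbil$-equivalence classes and that the interpretation Lemma~\ref{lem:algtrulem} holds.

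The one point I would flag as conceptually load-bearing, even though it is invisible in the final one-line argument, is that the well-definedness of $\aexcl$ on equivalence classes relies on the left monotonicity and right anti-monotonicity of exclusion \emph{in the presence of an arbitrary context} $\Gamma$, a property available for $\sbil$ but not for $\wbil$. This is exactly the hinge on which the whole algebraizability of $\sbil$ turns, and it is why the same strategy will later be seen to break down for $\wbil$. Given all of this machinery, the present statement follows immediately by combining Propositions~\ref{prop:sound} and~\ref{prop:compl}.
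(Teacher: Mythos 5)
Your proposal is correct and matches the paper exactly: the paper itself notes that (ALG1) ``is nothing but the combination of soundness and completeness'' and derives it by chaining Propositions~\ref{prop:sound} and~\ref{prop:compl}, precisely as you do. Your additional remarks on where the real work lies (the algebraic validation of $A_{11}$--$A_{14}$ and \GHrule{sDN}, and the well-definedness of $\aexcl$ on equivalence classes in the presence of a context) are accurate but not part of the proof of this particular proposition.
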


\begin{proposition}[ALG2~\coqdoc{Alg.sBIH_algebraizable.html\#sBIH_Alg2}]\label{prop:alg3}
The following holds.
\begin{center}
\begin{tabular}{c @{\hspace{1.4cm}} c @{\hspace{1.4cm}} c}
$\Theta \vDash_{\biha} \epsilon = \delta$ & if and only if & 
$\sderiv{\{\theta_1\leftrightarrow\theta_2\mid (\theta_1=\theta_2)\in \Theta\}}{\epsilon\leftrightarrow\delta}$
\end{tabular}
\end{center}
\end{proposition}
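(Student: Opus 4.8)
The plan is to reduce (ALG2) to the already-established (ALG1) using (ALG4) to rewrite every equation---both those in the premise set $\Theta$ and the conclusion $\epsilon=\delta$---into the special shape $(\,\cdot\leftrightarrow\cdot\,)=\top$ that (ALG1) handles. Write $\Theta^{\top}:=\{(\theta_1\leftrightarrow\theta_2)=\top\mid(\theta_1=\theta_2)\in\Theta\}$ and set $\Gamma:=\{\theta_1\leftrightarrow\theta_2\mid(\theta_1=\theta_2)\in\Theta\}$ and $\phi:=\epsilon\leftrightarrow\delta$, so that $\Theta^{\top}=\{\gamma=\top\mid\gamma\in\Gamma\}$.

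The first step is to establish the semantic equivalence
\[
\Theta\vDash_{\biha}\epsilon=\delta
\quad\Longleftrightarrow\quad
\Theta^{\top}\vDash_{\biha}(\epsilon\leftrightarrow\delta)=\top .
\]
For the forward direction I would fix $\alg{A}\in\biha$ and a valuation $\val$ satisfying every equation of $\Theta^{\top}$, i.e.\ $\int{\val}(\theta_1\leftrightarrow\theta_2)=\atop$ for each $(\theta_1=\theta_2)\in\Theta$; applying the $(\epsilon\leftrightarrow\delta)=\top\vDash_{\biha}\epsilon=\delta$ half of (ALG4) to each such pair gives $\int{\val}(\theta_1)=\int{\val}(\theta_2)$, so $\val$ satisfies $\Theta$, whence $\int{\val}(\epsilon)=\int{\val}(\delta)$ by hypothesis, and the other half of (ALG4) yields $\int{\val}(\epsilon\leftrightarrow\delta)=\atop$. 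The converse direction is entirely symmetric, invoking the two halves of (ALG4) in the opposite order. This step is nothing more than a pointwise (per algebra, per valuation) application of the two inter-derivabilities packaged in (ALG4), once for each premise and once for the conclusion.

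The second step observes that the right-hand side above, $\Theta^{\top}\vDash_{\biha}(\epsilon\leftrightarrow\delta)=\top$, is literally the statement $\{\gamma=\top\mid\gamma\in\Gamma\}\vDash_{\biha}\phi=\top$ with $\Gamma$ and $\phi$ as fixed above. By (ALG1) this is equivalent to $\sderiv{\Gamma}{\phi}$, that is to $\sderiv{\{\theta_1\leftrightarrow\theta_2\mid(\theta_1=\theta_2)\in\Theta\}}{\epsilon\leftrightarrow\delta}$, which is exactly the desired right-hand side of (ALG2). Chaining the two equivalences closes the argument.

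There is no genuine obstacle here, since all the substantive work has already been front-loaded into soundness, completeness (hence (ALG1)) and (ALG4); the only point requiring care is bookkeeping in the first step---the reduction must rewrite the premise set and the conclusion simultaneously, and the two orientations of (ALG4) must be deployed in the correct place for each of the two implications. I would also flag that (ALG4) as stated concerns a single equation, so it is applied once per member of $\Theta$ together with once for $\epsilon=\delta$; because equational consequence quantifies over all algebras and valuations, these pointwise rewritings compose without friction.
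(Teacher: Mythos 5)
Your proposal is correct and follows essentially the same route as the paper: both reduce $\Theta \vDash_{\biha} \epsilon = \delta$ to the statement $\{(\theta_1\leftrightarrow\theta_2)=\top\mid(\theta_1=\theta_2)\in\Theta\}\vDash_{\biha}(\epsilon\leftrightarrow\delta)=\top$ via the pointwise equivalence of $\phi=\psi$ with $(\phi\leftrightarrow\psi)=\top$ (which the paper re-derives from antisymmetry of $\leq$ where you cite (ALG4)), and then pass to derivability via soundness and completeness (which you package as (ALG1)). The only difference is bookkeeping: the paper invokes Propositions~\ref{prop:sound} and~\ref{prop:compl} separately for the two directions rather than citing (ALG1) as a single equivalence.
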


\begin{proof}
We start by proving the left-to-right direction and assume $\Theta \vDash_{\biha} \epsilon = \delta$.
Using Proposition~\ref{prop:compl} we can reduce our goal to 
$$\{(\theta_1\leftrightarrow\theta_2)=\top\mid(\theta_1=\theta_2)\in \Theta\}\vDash_{\biha}(\epsilon\leftrightarrow\delta)=\top.$$
Through the antisymmetry of $\leq$, assuming that $\int{\val}(\theta_1\leftrightarrow\theta_2)=\atop$
is equivalent to assuming $\int{\val}(\theta_1)=\int{\val}(\theta_2)$. 
Therefore, assuming all the equations $\{(\theta_1\leftrightarrow\theta_2)=\top\mid(\theta_1=\theta_2)\in \Theta\}$
to hold amounts to assuming the set of equations $\Theta$. 
Using our initial hypothesis we get that $\int{\val}(\epsilon)=\int{\val}(\delta)$. 
It is then trivial to prove that the equation $(\epsilon\leftrightarrow\delta)=\top$ holds.

We turn to the remaining direction and assume
$\sderiv{\{\theta_1\leftrightarrow\theta_2\mid (\theta_1=\theta_2)\in \Theta\}}{\epsilon\leftrightarrow\delta}$.
By applying Proposition~\ref{prop:sound} to this assumption we obtain
$$\{(\theta_1\leftrightarrow\theta_2)=\top\mid (\theta_1=\theta_2)\in \Theta\}\vDash_{\biha}(\epsilon\leftrightarrow\delta)=\top.$$
Here again, we prove $\Theta \vDash_{\biha} \epsilon = \delta$ using the 
equivalence between the equations $\phi=\psi$ and $\phi\leftrightarrow\psi=\top$.
\end{proof}

Now that we have proved that all conditions hold for the class $\biha$ with defining equation
$p=\top$ and equivalence formula $p\leftrightarrow q$, we obtain our precise algebraizability result.

\begin{theorem}\label{thm:algbiha}
$\sbil$ is (finitely) algebraizable with respect to $\biha$ with defining equations
$\{p=\top\}$ and equivalence formulas $\{p\leftrightarrow q\}$.
\end{theorem}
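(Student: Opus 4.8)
The plan is to read off Theorem~\ref{thm:algbiha} directly from the four propositions proved above, after instantiating the abstract data of Definition~\ref{def:algebraizable} as $\tau(x)=\{x=\top\}$ (defining equations) and $\Delta(x,y)=\{x\leftrightarrow y\}$ (equivalence formulas). No new mathematics is required: each of the four clauses (ALG1)--(ALG4) will turn out, after unfolding the translations, to be literally one of the propositions already established.

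First I would compute the relevant translations under this choice. From $\tau(\phi)=\{\phi=\top\}$ we get $\tau(\Gamma)=\{\gamma=\top\mid\gamma\in\Gamma\}$, and from $\Delta(x,y)=\{x\leftrightarrow y\}$ we get $\Delta(\epsilon,\delta)=\{\epsilon\leftrightarrow\delta\}$ and $\Delta(\Theta)=\{\theta_1\leftrightarrow\theta_2\mid(\theta_1=\theta_2)\in\Theta\}$. The only slightly delicate point is that the \emph{composite} translations collapse as expected: $\Delta(\tau(\phi))=\{\phi\leftrightarrow\top\}$ and $\tau(\Delta(\epsilon,\delta))=\{(\epsilon\leftrightarrow\delta)=\top\}$. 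With these identifications in hand, (ALG1) is exactly the soundness-and-completeness biconditional $\sderiv{\Gamma}{\phi}\Leftrightarrow\{\gamma=\top\mid\gamma\in\Gamma\}\vDash_{\biha}\phi=\top$; (ALG2) is exactly the proposition relating $\vDash_{\biha}$ to $\sbil$-derivability of the corresponding biconditionals; (ALG3), namely $\phi\dashv\vdash\Delta(\tau(\phi))$, unfolds to $\phi\dashv\vdash\phi\leftrightarrow\top$, which is the pair of derivabilities in the proposition establishing (ALG3); and (ALG4), namely $\epsilon=\delta\Dashv\vDash_{\biha}\tau(\Delta(\epsilon,\delta))$, unfolds to $\epsilon=\delta\Dashv\vDash_{\biha}(\epsilon\leftrightarrow\delta)=\top$, which is the pair of entailments in Proposition~\ref{prop:alg4}. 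Hence all four conditions hold with respect to $\biha$, so $\sbil$ is algebraizable with respect to $\biha$.

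Finally, for the finiteness claim I would observe that both $\tau(x)=\{x=\top\}$ and $\Delta(x,y)=\{x\leftrightarrow y\}$ are singletons, the latter relying on the footnote identification $p\leftrightarrow q\dashv\vdash\{p\to q,q\to p\}$ to keep the equivalence formulas a single formula rather than a two-element set; thus $\sbil$ is in fact \emph{finitely} algebraizable. There is no real obstacle left at this stage, since all the mathematical weight sits in the preceding results (soundness, completeness via the Lindenbaum--Tarski algebra $\LT{s}{\Gamma}$, and the direct verifications of (ALG3) and (ALG4)). The only care needed is the bookkeeping check that the composite substitutions $\Delta(\tau(-))$ and $\tau(\Delta(-,-))$ reduce to the forms appearing in the propositions for (ALG3) and~\ref{prop:alg4}, so that the four propositions genuinely instantiate the four clauses of Definition~\ref{def:algebraizable}.
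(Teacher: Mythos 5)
Your proposal matches the paper exactly: the theorem is obtained by assembling Propositions establishing (ALG1)--(ALG4) for the instantiation $\tau(x)=\{x=\top\}$, $\Delta(x,y)=\{x\leftrightarrow q\text{-style biconditional}\}$, with finiteness immediate from both sets being singletons. Your bookkeeping of the composite translations $\Delta(\tau(\phi))$ and $\tau(\Delta(\epsilon,\delta))$ is the only step the paper leaves implicit, and you handle it correctly.
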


While our presentation of bi-Heyting algebras in 
Definition~\ref{def:ha-biha} is not equational, 
such a definition can be given~\cite[1.1]{Rau74alg},
showing that bi-Heyting algebras form a variety. 
Thus, Theorem~\ref{thrm:easqv} allows us to conclude the following.

\begin{corollary}
  \label{cor:biheytingequivalg}
  The equivalent algebraic semantics of $\sbil$ are bi-Heyting algebras.
\end{corollary}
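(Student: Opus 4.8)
The plan is to obtain this corollary as an immediate instance of Theorem~\ref{thrm:easqv}, which guarantees that whenever a logic is algebraizable with respect to a \emph{variety}, that very variety is its equivalent algebraic semantics. Theorem~\ref{thm:algbiha} has already established that $\sbil$ is algebraizable with respect to $\biha$ (with defining equation $\{p=\top\}$ and equivalence formula $\{p\leftrightarrow q\}$), so the skeleton of the argument is fixed: I would apply Theorem~\ref{thrm:easqv} with $\clalg{V}=\biha$ and $\log{L}=\sbil$. The only premise of that theorem not yet in hand is that $\biha$ is a variety, so the entire content of the proof reduces to verifying this fact; once it is in place, the conclusion follows verbatim.

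To show that $\biha$ is a variety, by Definition of variety I must exhibit a set of equations $\Theta$ with $\biha=\{\alg{A}\mid \alg{A}\vDash\Theta\}$. The bounded-lattice axioms (P1)--(P8) of Definition~\ref{def:boundlat} are already equational, so the work lies entirely in replacing the residuation conditions $(ResI)$ and $(ResE)$ of Definition~\ref{def:ha-biha}, which are stated as order-theoretic biconditionals, by genuine equations. For $(ResI)$ I would adopt the classical equational axiomatisation of relative pseudo-complementation, e.g.~$a\ato a=\atop$, $a\aland(a\ato b)=a\aland b$, $b\aland(a\ato b)=b$, and the distribution of $\ato$ over $\aland$; for $(ResE)$ I would dualise this presentation to obtain equations governing $\aexcl$, exploiting the order-reversing symmetry between $(ResI)$ and $(ResE)$. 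This is precisely the equational description of bi-Heyting algebras supplied by Rauszer~\cite[1.1]{Rau74alg}, which I may simply cite.

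The only genuine content, and hence the main obstacle, is checking that this equational presentation is \emph{equivalent} to the residuation formulation, namely that every algebra satisfying the chosen equations validates $(ResI)$ and $(ResE)$ and conversely. This is a routine but slightly tedious lattice-theoretic verification carried out using the order $\leq$ defined by $a\leq b$ iff $a=a\aland b$; it is entirely standard and, as noted, already available in the literature, so I would not reproduce the calculation. With $\biha$ thereby confirmed to be a variety, the corollary is an immediate application of Theorem~\ref{thrm:easqv}.
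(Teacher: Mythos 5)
Your proposal matches the paper's argument exactly: the paper also notes that bi-Heyting algebras admit an equational presentation (citing Rauszer~\cite[1.1]{Rau74alg}) and hence form a variety, and then applies Theorem~\ref{thrm:easqv} to the algebraizability result of Theorem~\ref{thm:algbiha}. Your additional sketch of the equational axioms for $\ato$ and $\aexcl$ is a correct elaboration of the step the paper delegates to the citation.
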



\subsection{$\wbil$ is equivalential}

Now that we have situated $\sbil$ in the Leibniz hierarchy, we investigate
$\wbil$ in the same way. 
While we find that $\wbil$ is not algebraizable, we show that it is equivalential but not finitely so.
In fact, the relationship between $\wbil$ and bi-Heyting algebras
is so weak that we find it to not even have an algebraic semantics over
$\biha$. 

We first prove that $\wbil$ is not algebraizable.
To show this, we use the Isomorphism theorem (Theorem~\ref{thrm:theisothrm}),
for which we need a characterisation of the $\wbil$-filters~(\coqdoc{Alg.wBIH_not_algebraizable.html\#wBIH_filter}) over bi-Heyting algebras:
they are lattice filters.

\begin{definition}[\coqdoc{Alg.wBIH_not_algebraizable.html\#lattice_filter}]
  For any bi-Heyting algebra $\alg{A}$, a \emph{lattice filter} is a subset $F
  \subseteq \alg{A}$ such that:
  \begin{enumerate}
    \item $\atop \in F$; 
    \item If $a \in F$ and $a \leq b$ then $b \in F$;
    \item If $a,b \in F$ then $a \aland b \in F$.
  \end{enumerate}
\end{definition}

This is to be expected as, for Heyting algebras, intuitionistic
filters are the lattice filters~\cite[Exercise 2.31]{Fon16}.

\begin{lemma}
  \label{lem:vdashfiltfilt}
  For any bi-Heyting algebra $\alg{A}$ and subset $F \subseteq\alg{A}$, 
  the following conditions are equivalent:
  \begin{enumerate}
    \item $F$ is a $\wbil$-filter;
    \item $F$ is a lattice filter.
  \end{enumerate}
\end{lemma}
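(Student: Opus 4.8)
The plan is to prove the two implications separately: for the direction from lattice filters to $\wbil$-filters I would induct on the structure of the derivation, and for the converse I would verify the three lattice-filter closure conditions directly by exhibiting suitable $\wbil$-derivations together with valuations realising arbitrary elements of $\alg{A}$. Throughout, the point is that the abstract $\wbil$-filter property is driven entirely by the leaves and rules of the generalised Hilbert calculus, so it can be matched against the concrete closure conditions of a lattice filter.

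First I would show that (2) implies (1). Assuming $F$ is a lattice filter, I suppose $\wderiv{\Gamma}{\phi}$ and $\int{\val}(\Gamma)\subseteq F$, and prove $\int{\val}(\phi)\in F$ by induction on the derivation tree. For a leaf by $\GHrule{El}$ we have $\phi\in\Gamma$, so $\int{\val}(\phi)\in\int{\val}(\Gamma)\subseteq F$. For a leaf by $\GHrule{Ax}$, the formula $\phi$ is an axiom instance and is therefore interpreted as $\atop$ in every bi-Heyting algebra (the step of Proposition~\ref{prop:sound} showing that all axioms evaluate to $\atop$), and $\atop\in F$ by the first lattice-filter condition. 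For $\GHrule{MP}$, applied to $\wderiv{\Gamma}{\psi}$ and $\wderiv{\Gamma}{\psi\to\phi}$, the induction hypothesis gives $\int{\val}(\psi),\int{\val}(\psi\to\phi)\in F$; using the Heyting inequality $\int{\val}(\psi)\aland(\int{\val}(\psi)\ato\int{\val}(\phi))\leq\int{\val}(\phi)$, meet-closure places the left-hand side in $F$ and upward-closure then yields $\int{\val}(\phi)\in F$. The only genuinely $\wbil$-specific case is $\GHrule{wDN}$, where $\phi=\DN\psi$ with premise $\wderiv{\emptyset}{\psi}$; since $\psi$ is then a theorem it is interpreted as $\atop$ (soundness with empty context, obtained from Proposition~\ref{prop:sound} because every $\wbil$-theorem is an $\sbil$-theorem), whence $\int{\val}(\DN\psi)=(\atop\aexcl\atop)\ato\abot=\abot\ato\abot=\atop\in F$, independently of $\Gamma$.

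For the converse, (1) implies (2), I assume $F$ is a $\wbil$-filter and verify the three conditions; since $\Prop$ is infinite, I may freely choose a valuation sending fixed variables to prescribed elements of $\alg{A}$. Condition~1 follows from the derivable $\wderiv{\emptyset}{\top}$: the empty premise set is trivially contained in $F$, so $\int{\val}(\top)=\atop\in F$. For Condition~2, given $a\in F$ with $a\leq b$, I pick $\val$ with $\val(p)=a$ and $\val(q)=b$; since $a\leq b$ is equivalent to $a\ato b=\atop$ in a Heyting algebra (by (ResI) and (P1)), we get $\int{\val}(p\to q)=\atop\in F$, and applying the filter property to the derivable $\wderiv{\{p,p\to q\}}{q}$ gives $\int{\val}(q)=b\in F$. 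For Condition~3, given $a,b\in F$, I choose $\val$ with $\val(p)=a$, $\val(q)=b$ and apply the filter property to the derivable conjunction introduction $\wderiv{\{p,q\}}{p\land q}$, obtaining $\int{\val}(p\land q)=a\aland b\in F$.

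The point requiring the most care, and the conceptual crux of the lemma, is the $\GHrule{wDN}$ case above: it is precisely the \emph{empty-context} premise $\wderiv{\emptyset}{\psi}$ of this rule that makes the equivalence hold, for it forces $\psi$ to be a theorem, so $\DN\psi$ always evaluates to $\atop$ and $\GHrule{wDN}$ imposes no constraint on $F$ beyond containing $\atop$. Had the premise instead been $\Gamma\vdash\psi$, as in $\GHrule{sDN}$, we would obtain a genuine closure condition forcing $\int{\val}(\DN\psi)\in F$ whenever $\int{\val}(\psi)\in F$ — an analogue of the ``open''-filter condition (closure under the $\DN$ operation, which by Lemma~\ref{lem:jellyfish} behaves as a $\Box$) — and $\wbil$-filters would no longer coincide with arbitrary lattice filters. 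By contrast, the supporting algebraic facts ($a\ato b=\atop\Leftrightarrow a\leq b$ and $a\aland(a\ato b)\leq b$) and the handful of derivations invoked are entirely routine.
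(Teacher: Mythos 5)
Your proposal is correct and follows essentially the same route as the paper's proof: the same induction on the derivation for the lattice-filter-to-$\wbil$-filter direction (with the identical treatment of $\GHrule{wDN}$ via $\atop\aexcl\atop=\abot$ and $\abot\ato\abot=\atop$), and the same three derivations $\vdash\top$, $p,p\to q\vdash q$, $p,q\vdash p\land q$ with tailored valuations for the converse. Your closing remark on the empty-context premise of $\GHrule{wDN}$ being the crux is accurate and matches the paper's intent, though the paper does not spell it out.
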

\begin{proof}
  We prove the equivalence by showing separately that each statement implies the other.

  \begin{description}
    \item[($1 \Rightarrow 2$)] Let us first show that if $F$ is a $\wbil$-filter
      it is then a lattice filter~(\coqdoc{Alg.wBIH_not_algebraizable.html\#d_to_l_filter}). 

      We start by showing that $\atop\in F$.
      As $F$ is a $\wbil$-filter and $\wderiv{}{\top}$, we get that $\int{\val}(\top)\in F$
      for any valuation $\int{\val}$ over $\alg{A}$. But $\int{\val}(\top)=\atop$ by definition,
      so $\atop\in F$.

      Next, we show that $F$ is closed under $\leq$.
      Let $a,b \in F$ such that $b \geq a$.
      Then, $a \ato b = \atop$, hence $a \ato b\in F$ given the above.
      Let $\int{\val}$ be a valuation over $\alg{A}$ such that $\int{\val}(p) = a$ and $\int{\val}(q) = b$.
      As $\wderiv{p, p \to q}{q}$ and $\int{\val}(p),\int{\val}(p)\ato\int{\val}(q) \in F$,
      then by the fact that $F$ is a $\wbil$-filter we get $\int{\val}(q) \in F$ and hence $b \in F$.

      Finally, we show that $F$ is closed under (finite) meets.
      Let $a,b \in F$, and let $\int{\val}$ be a valuation over $\alg{A}$ such that $\int{\val}(p) = a$ and $\int{\val}(q) = b$.
      Then, as $\wderiv{p,q}{p\land q}$ and $\int{\val}(p),\int{\val}(q)\in  F$, 
      we get $\int{\val}(p) \aland \int{\val}(q) = \int{\val}(p \land q) \in F$ as it is a $\wbil$-filter.
      Consequently, $a \aland b \in F$.

      We conclude that $F$ is a lattice filter.
      
    \item[($2 \Rightarrow 1$)] We now show that if $F$ be a lattice filter it is then a
      $\wbil$-filter~(\coqdoc{Alg.wBIH_not_algebraizable.html\#l_to_d_filter}).
      We need to show that $F$ is closed under consecutions: if $\wderiv{\Gamma}{\phi}$
      then for any valuation $\val$ over $\alg{A}$, if $\int{\val}(\Gamma)\subseteq F$ then
      $\int{\val}(\phi)\in F$.
      Assume that $\wderiv{\Gamma}{\phi}$. We prove our goal by induction on the derivation of $\wderiv{\Gamma}{\phi}$.
      In each case, we let $\val$ be a valuation such that $\int{\val}(\Gamma)\subseteq F$ and we show $\int{\val}(\phi)\in F$.

      The case for the rule \GHrule{El} is straightforward, and the one for \GHrule{Ax} can be treated as in
      Proposition~\ref{prop:sound}, where it is shown that $\int{\val}(\psi) = \atop$ for every
      axiom $\psi$, as $\atop\in F$ given that $F$ is a lattice filter.

      For the case of \GHrule{MP}, we get $\int{\val}(\psi) \in F$ and $\int{\val}(\psi \to \theta) \in F$
      by induction hypothesis.
      By definition, we get $\int{\val}(\psi) \ato \int{\val}(\theta) \in F$.
      So, $\int{\val}(\psi) \aland (\int{\val}(\psi) \ato h(\theta))\in F$ as $F$ is a lattice filter.
      As $\int{\val}(\psi) \aland (\int{\val}(\psi) \ato \int{\val}(\theta)) \leq \int{\val}(\theta)$,
      we get $\int{\val}(\theta) \in F$, solving the case for \GHrule{MP}.

      Let us now show the case for \GHrule{wDN}. 
      It suffices to show that $\aneg\awneg \int{\val}(\psi) = \atop$,
      as $\atop\in F$ given that $F$ is a lattice filter.
      We prove this equality via the antisymmetry of $\leq$, and as
      $\atop$ is the top element we are left to prove that
      $\atop \leq \aneg\awneg \int{\val}(\psi)$.
      Using (ResI) we reduce this to $\atop\aland\awneg \int{\val}(\psi) \leq \abot$,
      which is equivalent to $\awneg \int{\val}(\psi) \leq \abot$.
      In turn, using (ResE) we further reduce our goal to $\atop\leq \int{\val}(\psi)\alor\abot$,
      equivalent to $\atop\leq \int{\val}(\psi)$.
      This result can be obtained by the fact that $\sbil$ is an extension of $\wbil$~(\coqdoc{GenHil.BiInt_extens_interactions.html\#sBIH_extens_wBIH}),
      giving us $\sderiv{}{\psi}$ which entails $\int{\val}(\psi)=\atop$ by soundness of $\sbil$
      (Proposition~\ref{prop:sound}).
      Therefore, $F$ is a $\wbil$ filter. \qedhere
  \end{description}
\end{proof}

We are now able to prove that $\wbil$ is not algebraizable, inspiring ourselves from~\cite[Example 3.61]{Fon16}.

\begin{theorem}
  \label{thrm:weakbiintalg}
  Weak bi-intuitionistic logic is not algebraizable.%
\footnote{The proof of this theorem has not been fully formalised,
as it crucially relies on the Isomorphism theorem, which we have not formalised due to its generality.
Still, the counterexample in the proof, i.e.~the three elements chain, and its properties 
are all formalised.}
\end{theorem}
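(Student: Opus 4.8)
The plan is to assume toward a contradiction that $\wbil$ is algebraizable, with equivalent algebraic semantics $\clalg{K}$, and to contradict the Isomorphism Theorem (Theorem~\ref{thrm:theisothrm}) on a single well-chosen algebra. The witness is the three-element chain $\alg{C}$ with carrier $\{\abot, a, \atop\}$ and order $\abot < a < \atop$. This is a bi-Heyting algebra: on a finite chain both residua are definable, with $x \ato y = \atop$ when $x \leq y$ and $x \ato y = y$ otherwise, and dually $x \aexcl y = \abot$ when $x \leq y$ and $x \aexcl y = x$ otherwise. The contradiction will be a cardinality mismatch: Theorem~\ref{thrm:theisothrm} forces a lattice isomorphism between $Con_\clalg{K}(\alg{C})$ and $Fi_\vdash(\alg{C})$, yet I will show that the former has at most two elements while the latter has exactly three.

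First I would count the $\wbil$-filters of $\alg{C}$. By Lemma~\ref{lem:vdashfiltfilt} these are exactly the lattice filters, and on $\alg{C}$ there are precisely three, namely $\{\atop\}$, $\{a,\atop\}$ and $\{\abot, a, \atop\}$, forming a chain under inclusion. Hence $Fi_\vdash(\alg{C})$ is a three-element lattice.

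Next I would count the congruences of $\alg{C}$, showing that $Con(\alg{C})$ contains only the identity and the total congruence. As the carrier has three elements, the only further candidates are the three two-block partitions, and each fails clause (Cong) of Definition~\ref{def:congruences}. Merging $\{\abot, a\}$ fails because of $\ato$: from $\abot \congr a$, together with $a \ato \abot = \abot$ while $\abot \ato \abot = \atop$, (Cong) would force $\abot \congr \atop$, collapsing everything. Merging $\{\abot, \atop\}$ fails because of $\aland$: from $\abot \congr \atop$, together with $\atop \aland a = a$ while $\abot \aland a = \abot$, (Cong) would force $\abot \congr a$. Merging $\{a, \atop\}$ fails because of $\aexcl$: from $a \congr \atop$, together with $\atop \aexcl a = \atop$ while $a \aexcl a = \abot$, (Cong) would force $\abot \congr \atop$. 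Thus $Con(\alg{C})$ has exactly two elements, and since $Con_\clalg{K}(\alg{C}) \subseteq Con(\alg{C})$ by the definition of relative congruences, it has at most two elements. A lattice of size at most two cannot be isomorphic to the three-element lattice $Fi_\vdash(\alg{C})$, contradicting Theorem~\ref{thrm:theisothrm}. Therefore $\wbil$ is not algebraizable.

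I expect the congruence count to be the crux, and within it the decisive step is the failure of the merge $\{a, \atop\}$: this partition is compatible with the lattice reduct and even with the Heyting reduct, and is ruled out \emph{only} by the exclusion operator $\aexcl$. This is precisely the structural tension the argument exploits: $\wbil$-filters detect only the lattice structure (Lemma~\ref{lem:vdashfiltfilt}), whereas congruences must respect the full bi-Heyting signature, so the filter lattice of $\alg{C}$ is strictly larger than its congruence lattice and no isomorphism can exist.
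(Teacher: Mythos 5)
Your proposal is correct and follows essentially the same route as the paper: the same three-element chain with the same residua, the same appeal to the Isomorphism Theorem together with Lemma~\ref{lem:vdashfiltfilt}, and the same cardinality mismatch between the (at most two) relative congruences and the three lattice filters. Your case analysis of the two-block partitions is just a mild reorganisation of the paper's case split on which pair lies in a nontrivial congruence, and your closing observation that only $\aexcl$ rules out the merge of the top two elements is accurate.
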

\begin{proof}
  Assume for a contradiction that $\wbil$ is algebraizable.
  Then by the Isomorphism Theorem (Theorem~\ref{thrm:theisothrm}) and
  Lemma~\ref{lem:vdashfiltfilt}
  there exists an isomorphism between lattice filters of any bi-Heyting algebra $\alg{A}$ and
  relative congruences $Con_\clalg{K}(\alg{A})$. To show this is not true we give a
  counter-example to this isomorphism.

  We take $c_3 = \{0,1/2,1\}$, the three element chain, as our needed
  counter-example.

  \begin{center}
    \begin{tikzpicture}
      \node (1) at (0,0) {$0$};
      \node (2) at (0,1) {$1/2$};
      \node (3) at (0,2) {$1$};
      \draw (1) -- (2);
      \draw (2) -- (3);
    \end{tikzpicture}
  \end{center}

  We define $a \ato b = 1$ if $a \leq b$ and $a \ato b =b$ otherwise.
  Additionally, we define $a \aexcl b = 0$ if $a\leq b$ and $a \aexcl b =a$ otherwise.
  It is known that this definition of $\ato$ forms a Heyting algebra, so let us show
  that our $\aexcl$ makes it into a bi-Heyting algebra~(\coqdoc{Alg.wBIH_not_algebraizable.html\#zho_alg}). 

  Assume that $a \aexcl b \leq c$.
  If it is the case that $a \leq b$, then $a \leq b \alor c$ is easily obtained.
  Instead, if $a \not \leq b$ then we have $a = a\aexcl b \leq c \leq b\alor c$.
  In the other direction, if $a \leq b \alor c$, and $a \leq b$,
  then $a \aexcl b = 0 \leq c$.
  In the case where $a \not \leq b$, we get $a\aexcl b = a$.
  Therefore it is sufficient to show $a\leq c$.
  Given that $a \not \leq b$ and $a \leq b \alor c$, 
  we can easily obtain this result.
  Consequently, $c_3$ is a bi-Heyting algebra.

  Our next task consists in showing that $c_3$ is simple, i.e.~it has only two congruences~(\coqdoc{Alg.wBIH_not_algebraizable.html\#only_two_zho_congruences}).

  Let $C$ be any congruence on $c_3$.
  Either $C = \Delta = \{(a,a) \mid a \in c_3\}$ or $C \neq\Delta$.
  If $C \neq \Delta$, there exist $a,b \in c_3$ such that $(a,b) \in C$ and $a \neq b$.
  There are three cases for the given $a$ and $b$:
  \begin{enumerate}
    \item If $(0,1) \in C$, then $(0\aland 1/2, 1 \aland 1/2) = (0,1/2) \in C$ and 
      so $C$ is the trivial congruence.
    \item If $(0,1/2) \in C$, then $(0 \ato 0, 1/2 \ato 0) = (1,1/2)$ and so $C$
      is the trivial congruence.
    \item If $(1/2,1) \in C$, then $(1 \aexcl 1/2, 1 \aexcl 1) = (1,0) \in C$ and so
      $C$ is the trivial congruence.
  \end{enumerate}
  So, we can see that if $C \neq \Delta$, then $C$ is the trivial congruence.
  As a consequence, the cardinality of the relative congruences of $\alg{A}$ must be less
  then or equal to $2$.

  As all three distinct upsets on $c_3$ are lattice filters~(\coqdoc{Alg.wBIH_not_algebraizable.html\#first_zho_lattice_filters},\coqdoc{Alg.wBIH_not_algebraizable.html\#second_zho_lattice_filters},\coqdoc{Alg.wBIH_not_algebraizable.html\#third_zho_lattice_filters}), this means there cannot be an
  isomorphism between the relative congruences of $\alg{A}$ and the $\wbil$-filters
  of $c_3$, therefore $\wbil$ is not algebraizable.
\end{proof}

We now show that $\wbil$ is equivalential with equivalence formulas
$\Delta(x,y) = \{(\neg\wneg)^n (x \leftrightarrow y) \mid n\in \mathbb{N}\}$.

\begin{theorem}
  \label{thrm:wbilequiv}
  $\wbil$ is equivalential with equivalence formulas $\{(\neg\wneg)^n (x \leftrightarrow y) \mid n\in \mathbb{N}\}$.
\end{theorem}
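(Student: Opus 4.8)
The plan is to verify the three conditions isolated in Definition~\ref{def:equivalential}: the protoalgebraicity conditions (R) and (MP'), together with the congruence condition (Re), all relative to the candidate family $\Delta(x,y)=\{(\DN)^n(x\leftrightarrow y)\mid n\in\N\}$. The first two are light and I would discharge them purely proof-theoretically, while (Re) carries all the weight; for it I would pass to the semantic side, exploiting the soundness and completeness of $\wbil$ for the local consequence relation (Theorem~\ref{thm:soundcomplkripke}) and the ``jellyfish'' reading of the $\DN$ pattern (Lemma~\ref{lem:jellyfish}).

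For (R), note that $x\to x$ is a theorem and hence so is $x\leftrightarrow x$; applying the rule $\GHrule{wDN}$ to a theorem preserves theoremhood, so $\wderiv{}{(\DN)^n(x\leftrightarrow x)}$ follows by induction on $n$. For (MP'), only the index $n=0$ is needed: from the conjunct $x\to y$ of the hypothesis $x\leftrightarrow y$ and the hypothesis $x$, a single application of $\GHrule{MP}$ yields $y$, so $\wderiv{x,\Delta(x,y)}{y}$.

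The heart of the matter is (Re), for which I would argue semantically and then transfer through completeness, deriving each element $(\DN)^m(\lambda x_1x_2\leftrightarrow\lambda y_1y_2)$ of the (infinite) target set separately, so that finitarity is never an issue. The nullary cases $\top,\bot$ reduce to (R). For the binary connectives, fix $m\in\N$ and aim to show $\bigcup_i\Delta(x_i,y_i)\locmodels(\DN)^m(\lambda x_1x_2\leftrightarrow\lambda y_1y_2)$. By Lemma~\ref{lem:jellyfish}, forcing the whole hypothesis set at a world $w$ says exactly that each atom $x_i$ agrees with $y_i$ at every world $u$ of the connected component of $w$, i.e.\ at all $u$ with $u\bowtie w$ (Definition~\ref{def:zigzag}). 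The key observation is that $\bowtie$ is an equivalence relation and that the forward reach of $\to$ (worlds $u\geq v$) and the backward reach of $\excl$ (worlds $u\leq v$) both stay inside the component, since $u\geq v$ and $u\leq v$ each witness $u\bowtie v$. Consequently, at any $v$ with $w\zz^m v$ — which lies in the component — the evaluation of $\lambda x_1x_2$ consults the atoms $x_i$ only at component worlds, where they agree with the $y_i$; a case split on $\lambda\in\{\land,\lor,\to,\excl\}$ then gives $\mo{M},v\Vdash\lambda x_1x_2$ iff $\mo{M},v\Vdash\lambda y_1y_2$, i.e.\ $\mo{M},v\Vdash\lambda x_1x_2\leftrightarrow\lambda y_1y_2$. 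Letting $v$ range over all $\zz^m$-successors of $w$ and re-packaging via Lemma~\ref{lem:jellyfish} yields $\mo{M},w\Vdash(\DN)^m(\lambda x_1x_2\leftrightarrow\lambda y_1y_2)$; completeness then delivers derivability.

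I expect the $\excl$ clause of (Re) to be the main obstacle, together with seeing why the \emph{full} family is unavoidable: forcing $x\leftrightarrow y$ at $w$ only propagates upward by persistence (Lemma~\ref{BILPers}), whereas $\excl$ inspects strictly lower worlds, and nested $\to$ and $\excl$ can reach arbitrarily deep into the component, so agreement must be imposed at every $\zz^m$-successor at once — which is precisely what requiring all $n$ achieves (and foreshadows the failure of finite equivalentiality). The remaining work is the bookkeeping needed to license the component argument: that $\bowtie$ is reflexive (via $\zz^0$), symmetric and transitive, and that $u\geq v$ and $u\leq v$ each give $u\bowtie v$, so that the component is closed under the accessibility used by each connective. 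With those closure facts in place, the connective-by-connective verification is routine.
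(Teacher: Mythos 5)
Your proposal is correct, and (R) and (MP$'$) match the paper's proof exactly; the difference lies in how (Re) is discharged. The paper treats the intuitionistic connectives \emph{proof-theoretically}: for $\to$ it reduces the $n$-th target formula, via the deduction theorem, to $\wderiv{}{(\DN)^n(x_1\leftrightarrow x_2)\to(\DN)^n(y_1\leftrightarrow y_2)\to(\DN)^n((x_1\to y_1)\leftrightarrow(x_2\to y_2))}$ and then uses the fact that $(\DN)^n$ distributes over $\to$ like a normal box modality, so that $n$ applications of \GHrule{wDN} to an intuitionistic theorem finish the job. Only the $\excl$ case is handled semantically, and even there the paper isolates the sharp single implication $\wderiv{}{\DN(x_1\leftrightarrow x_2)\to\DN(y_1\leftrightarrow y_2)\to((x_1\excl y_1)\leftrightarrow(x_2\excl y_2))}$, so that the index-$(n+1)$ members of the premise sets alone yield the index-$n$ member of the conclusion set. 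You instead argue semantically and uniformly for \emph{all} connectives, using the full infinite premise sets: Lemma~\ref{lem:jellyfish} converts forcing of $\bigcup_i\Delta(x_i,y_i)$ at $w$ into agreement of $x_i$ and $y_i$ throughout the $\bowtie$-component of $w$, and the closure of that component under both the forward reach of $\to$ and the backward reach of $\excl$ gives agreement of the compound formulas at every $\zz^m$-successor of $w$; completeness for $\locmodels$ (which does hold for arbitrary, possibly infinite, premise sets) then transfers this to $\vdash_{\log{w}}$. Both arguments are sound. Yours is conceptually cleaner and avoids the modal-distribution lemma; the paper's is more economical in that it pinpoints exactly which premises are consumed, and that bookkeeping is what sets up the subsequent proof that $\wbil$ is \emph{not finitely} equivalential (where one shows that the index-$(n+1)$ premise genuinely cannot be dispensed with for the $\excl$ case) --- a moral your proposal already anticipates in its closing remarks.
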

\begin{proof}
  To prove $\wbil$ is equivalential we need to prove that $\Delta(x,y)$ satisfies
  the three properties (R), (MP') and (Re) from Definition~\ref{def:equivalential}.
  
  First, we prove (R)~(\coqdoc{Alg.wBIH_equivalential.html\#wBIH_R}). Note that $\wderiv{}{x \leftrightarrow y}$.
  Thus, by repeated applications of \GHrule{wDN}, we easily obtain
  $\wderiv{}{(\DN)^n (x \leftrightarrow y)}$ for all $n$, hence $\vdash_w \Delta(x,x)$.

  Second, we focus on (MP')~(\coqdoc{Alg.wBIH_equivalential.html\#wBIH_MP}).
  Note that $\wderiv{x,x \to y}{y}$, which straightforwardly gives us $\wderiv{x,x \leftrightarrow y}{y}$.
  Thus, by monotonicity we get $\wderiv{x,\Delta(x,y)}{y}$.
  
  Third and last, we prove (Re) for every operator in $\wbil$.
  For the intuitionistic part we only show the cases of $\top$ and $\to$,
  as all other cases can be treated similarly (\coqdoc{Alg.wBIH_equivalential.html\#wBIH_Re_Bot},\coqdoc{Alg.wBIH_equivalential.html\#wBIH_Re_Or},\coqdoc{Alg.wBIH_equivalential.html\#wBIH_Re_And}).
  And we also show the special case of $\excl$.
  \begin{enumerate}
    \item[($\top$)] (\coqdoc{Alg.wBIH_equivalential.html\#wBIH_Re_Top}) As we have proven $\wderiv{}{\Delta(x,x)}$ for all $x$,
      we get $\wderiv{}{\Delta(\top,\top)}$ by structurality.
      
    \item[($\to$)] (\coqdoc{Alg.wBIH_equivalential.html\#wBIH_Re_Imp}) To show $\wderiv{\Delta(x_1,x_2),\Delta(y_1,y_2)}{\Delta((x_1 \to y_1),(x_2\to y_2))}$,
    we let $n$ be a natural number and prove
    $\wderiv{\Delta(x_1,x_2),\Delta(y_1,y_2)}{(\DN)^n((x_1 \to y_1) \leftrightarrow (x_2\to y_2))}$.
    By monotonicity and the deduction theorem, it suffices to prove
    $$\wderiv{}{(\DN)^n(x_1 \leftrightarrow x_2) \to (\DN)^n(y_1\leftrightarrow y_2)\to(\DN)^n((x_1 \to y_1) \leftrightarrow (x_2\to y_2))}.$$
    One can show that $(\DN)^n$ distributes over $\to$~(\coqdoc{GenHil.sBIH_meta_interactions.html\#sDN_form_dist_imp}), like a normal modality, 
    so we can further reduce our goal to $\wderiv{}{(\DN)^n((x_1 \leftrightarrow x_2) \to (y_1\leftrightarrow y_2)\to ((x_1 \to y_1) \leftrightarrow (x_2\to y_2)))}$.
    This goal is reached via $n$ applications of \GHrule{wDN} on $\wderiv{}{(x_1 \leftrightarrow x_2) \to (y_1\leftrightarrow y_2)\to ((x_1 \to y_1) \leftrightarrow (x_2\to y_2))}$.
    As $n$ is arbitrary, we reached our goal.
      
    \item[($\excl$)] (\coqdoc{Alg.wBIH_equivalential.html\#wBIH_Re_Excl}) We proceed similarly to the above,
    but instead of reducing our goal to 
    $$\wderiv{}{(x_1 \leftrightarrow x_2) \to (y_1\leftrightarrow y_2)\to ((x_1 \excl y_1) \leftrightarrow (x_2\excl y_2))},$$
    which is provable in neither $\wbil$ nor $\sbil$, we reduce it to
    $$\wderiv{}{\DN(x_1 \leftrightarrow x_2) \to \DN(y_1\leftrightarrow y_2)\to ((x_1 \excl y_1) \leftrightarrow (x_2\excl y_2))}.$$
    This can be done by singling out $(\DN)^{n+1}(x_1\leftrightarrow x_2)$ and $(\DN)^{n+1}(y_1\leftrightarrow y_2)$ in
    $\Delta(x_1,x_2)$ and $\Delta(y_1,y_2)$ via monotonicity.
    We prove our goal using completeness with respect to the local consequence relation in the Kripke semantics (Theorem~\ref{thm:soundcomplkripke}).
    Let $\mo{M}$ be a model, and $w$ a world. 
    It suffices to show that $\mo{M},w\Vdash(x_1 \excl y_1) \leftrightarrow (x_2\excl y_2)$
    while assuming $\mo{M},w\Vdash \DN(x_1\leftrightarrow x_2)$ and
    $\mo{M},w\Vdash \DN(y_1\leftrightarrow y_2)$, as the validity of implications can be proved
    locally in the Kripke semantics.
    We only show $\mo{M},w\Vdash(x_1 \excl y_1) \to (x_2\excl y_2)$ as the other direction is symmetric.
    Let $v\in W$ such that $w\leq v$ and $\mo{M},v\Vdash x_1 \excl y_1$. 
    We show that $\mo{M},v\Vdash x_2\excl y_2$.
    We must therefore have a $u\leq v$ such that $\mo{M},u\Vdash x_1$ but $\mo{M},u\not\Vdash y_1$.
    However, as $w\leq v\geq u$ and $\mo{M},w\Vdash \DN(x_1\leftrightarrow x_2)$ and $\mo{M},w\Vdash \DN(y_1\leftrightarrow y_2)$,
    we get that $\mo{M},u\Vdash x_2$ but $\mo{M},u\not\Vdash y_2$.
    This shows that $\mo{M},v\Vdash x_2\excl y_2$, so we are done.
    \end{enumerate}

  Hence we can see $\wbil$ is an equivalential logic.
\end{proof}

The attentive reader may have noted that our set of equivalence formulas $\Delta(x,y)$ is
not finite. One could therefore wonder whether there is a finite set of equivalence formulas
with which $\wbil$ can be shown to be equivalential, thereby making it \emph{finitely}
equivalential. In the remaining of this subsection, we provide a negative answer to this question.

To prove this, we make use of the following proposition as given in 
\cite[Proposition 6.65.5 and Proposition 6.65.6]{Fon16} stating that
if a logic is finitely equivalential, then any set of equivalence formulas
for it can be finitarised.

\begin{proposition}
  \label{prop:equivmultiplecong}
  Let $\log{L}$ be some finite, finitary 
  equivalential logic with congruence formulas $\Delta(x,y)$. There exists a finite subset
  $\Delta'(x,y) \subseteq \Delta(x,y)$ such that $\Delta'(x,y) \dashv\vdash \Delta(x,y)$ and
  $\Delta'(x,y)$ is a set of congruence formulas for $\log{L}$.
\end{proposition}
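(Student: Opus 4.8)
The plan is to combine three ingredients: that $\log{L}$ is \emph{finitely} equivalential (the intended reading here, since without a finite set of congruence formulas the conclusion fails in general), so that some finite set $E(x,y)$ of congruence formulas is available; that $\log{L}$ is finitary; and the standard \AAL~fact that in an equivalential logic any two sets of congruence formulas are interderivable. Granting the last of these, fixing such a finite $E$ gives at once $E(x,y)\dashv\vdash\Delta(x,y)$, and in particular $\Delta(x,y)\vdash\eta(x,y)$ for each of the finitely many $\eta\in E$.

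Then I would invoke finitarity. For each $\eta\in E$, the consequence $\Delta(x,y)\vdash\eta$ is witnessed by a finite subset $\Delta_\eta\subseteq\Delta$ with $\Delta_\eta\vdash\eta$. Setting $\Delta'(x,y):=\bigcup_{\eta\in E}\Delta_\eta$ then yields a finite subset of $\Delta$, since $E$ is finite and each $\Delta_\eta$ is finite. It remains to verify $\Delta'\dashv\vdash\Delta$. The inclusion $\Delta'\subseteq\Delta$ gives $\Delta\vdash\Delta'$ directly by (I) and (M). For the converse, (M) gives $\Delta'\vdash\eta$ for every $\eta\in E$ (as $\Delta_\eta\subseteq\Delta'$ and $\Delta_\eta\vdash\eta$), hence $\Delta'\vdash E$; chaining this with $E\vdash\Delta$ from the previous paragraph through (C) produces $\Delta'\vdash\delta$ for every $\delta\in\Delta$, that is $\Delta'\vdash\Delta$.

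Finally I would show that $\Delta'$ is itself a set of congruence formulas, by transferring the defining properties (R), (MP') and (Re) of Definition~\ref{def:equivalential} from $\Delta$ across the interderivability $\Delta'\dashv\vdash\Delta$. For example, (MP') for $\Delta'$ follows from $x,\Delta(x,y)\vdash y$ together with $\Delta'(x,y)\vdash\Delta(x,y)$ via (C); (R) for $\Delta'$ follows from $\vdash\Delta(x,x)$ and $\Delta(x,x)\vdash\Delta'(x,x)$, the latter obtained from $\Delta(x,y)\vdash\Delta'(x,y)$ by the substitution $y\mapsto x$ using (S), again via (C); and (Re) is analogous. The conceptual crux --- and the only place where the finiteness hypotheses genuinely bite --- is the interplay in the middle paragraph: finitarity localises the derivation of each $\eta$ to a finite fragment of $\Delta$, while finiteness of $E$ keeps the union $\Delta'$ finite. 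The main external obstacle is the uniqueness-up-to-interderivability of congruence formulas, which I would cite as a standard \AAL~result rather than reprove; granting it, every remaining step is routine structural bookkeeping with the closure properties (I), (M), (C) and (S) of Definition~\ref{def:logic}.
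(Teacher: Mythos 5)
Your proof is correct. Note that the paper itself gives no proof of this proposition --- it is imported verbatim from Font's textbook (Propositions 6.65.5 and 6.65.6) --- so there is no in-paper argument to diverge from; your reconstruction is exactly the standard one. You read the garbled hypothesis ``finite, finitary equivalential logic'' correctly as ``finitely equivalential and finitary'', which matches the paper's surrounding prose. The one external ingredient you lean on, the uniqueness of congruence formulas up to interderivability in an equivalential logic, is indeed a standard \AAL{} fact (it follows from (R), (MP$'$) and the replacement property derived from (Re)), and citing it is legitimate; everything else --- localising each $\Delta\vdash\eta$ to a finite $\Delta_\eta$ by finitarity, taking the finite union over the finite $E$, and transferring (R), (MP$'$), (Re) across $\Delta'\dashv\vdash\Delta$ via Cut --- is routine and correctly executed.
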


We now prove that $\wbil$ is not finitely equivalential. Our proof
essentially show that
no finitarisation of $\Delta(x,y)$ can serve as set of equivalence
formulas for $\wbil$.

\begin{theorem}[\coqdoc{Alg.wBIH_equivalential.html\#wBIH_Re_Excl_fail}]
  $\wbil$ is not finitely equivalential.
\end{theorem}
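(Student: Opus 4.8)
The plan is to argue by contradiction, using Proposition~\ref{prop:equivmultiplecong} to collapse finite equivalentiality into one concrete underivability, which I then refute with a single Kripke counter-model. Suppose $\wbil$ were finitely equivalential. Since $\wbil$ has a finite language, is finitary, and is equivalential with congruence formulas $\Delta(x,y)=\{(\DN)^n(x\leftrightarrow y)\mid n\in\N\}$ by Theorem~\ref{thrm:wbilequiv}, Proposition~\ref{prop:equivmultiplecong} supplies a finite subset $\Delta'(x,y)\subseteq\Delta(x,y)$ with $\Delta'(x,y)\dashv\vdash\Delta(x,y)$. As $\Delta'$ is finite (and nonempty, since $\nowderiv{}{x\leftrightarrow y}$), it has a largest exponent $N$, so $\Delta'(x,y)\subseteq\{(\DN)^n(x\leftrightarrow y)\mid n\leq N\}$. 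The interderivability forces $\wderiv{\Delta'(x,y)}{(\DN)^{N+1}(x\leftrightarrow y)}$, whence by monotonicity
$$\wderiv{\{(\DN)^{n}(x\leftrightarrow y)\mid n\leq N\}}{(\DN)^{N+1}(x\leftrightarrow y)}.$$
It therefore suffices to show that this consecution fails for every $N$.

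By completeness of $\wbil$ with respect to the local Kripke consequence (Theorem~\ref{thm:soundcomplkripke}), I refute it semantically: I build a model $\mo{M}$ with a world $w$ such that $\mo{M},w\Vdash(\DN)^n(x\leftrightarrow y)$ for every $n\leq N$ while $\mo{M},w\not\Vdash(\DN)^{N+1}(x\leftrightarrow y)$. The translation tool is Lemma~\ref{lem:jellyfish}: forcing $(\DN)^n(x\leftrightarrow y)$ at $w$ is the same as $x\leftrightarrow y$ holding at every $v$ with $w\zz^n v$. Thus the premises demand that $x\leftrightarrow y$ hold throughout the zig-zag $N$-ball of $w$, and falsifying the conclusion demands a single world at zig-zag distance exactly $N+1$ where $x\leftrightarrow y$ fails.

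For the model I take a zig-zag fence with valleys $v_0,\dots,v_{N+1}$ and peaks $u_1,\dots,u_{N+1}$, where $v_{i-1}\leq u_i\geq v_i$, so that $v_0\zz v_1\zz\cdots\zz v_{N+1}$ realises $v_0\zz^{N+1}v_{N+1}$ and the zig-zag distance among the valleys is their index difference. I add one fresh maximal point $t$ lying above $v_{N+1}$ only, and fix a valuation making $x$ true exactly at $t$ and $y$ false everywhere; this is persistent precisely because $t$ is maximal. Setting $w=v_0$, the only world where the interpretations of $x$ and $y$ disagree is $t$, and $t$ lies in the up-set of no world within zig-zag distance $N$ of $v_0$. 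Hence $x\leftrightarrow y$ holds throughout the $N$-ball, so by Lemma~\ref{lem:jellyfish} all premises hold at $v_0$; while $v_0\zz^{N+1}v_{N+1}$ together with $t\geq v_{N+1}$ witnesses $\mo{M},v_{N+1}\not\Vdash x\leftrightarrow y$, so $(\DN)^{N+1}(x\leftrightarrow y)$ fails at $v_0$. This contradicts the displayed consecution and completes the argument.

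The main obstacle is the placement of the disagreement between $x$ and $y$. Because persistence propagates truth upward and $x\leftrightarrow y$ can only be falsified by an \emph{upward} witness, any disagreement introduced low in the fence leaks up through the shared peak $u_{N+1}$, which lies above the distance-$N$ valley $v_N$, and would wrongly falsify $x\leftrightarrow y$ inside the $N$-ball. Putting the disagreement instead at a fresh maximal point $t$ sitting above the distance-$(N+1)$ valley $v_{N+1}$ and above nothing closer is exactly what keeps it invisible to the $N$-ball yet visible at distance $N+1$; checking these two zig-zag-ball facts against Lemma~\ref{lem:jellyfish} is the crux, and everything else is routine.
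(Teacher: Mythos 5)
Your proof is correct, but it reaches the contradiction by a different route than the paper. Both arguments start identically: assume finite equivalentiality, invoke Proposition~\ref{prop:equivmultiplecong} to get a finite $\Delta'(x,y)\subseteq\Delta(x,y)$ with largest exponent $N$, and then refute a consecution by soundness over a zig-zag fence whose length is calibrated to $N$. The difference is \emph{which} conclusion of that proposition you contradict. The paper keeps the full package ``$\Delta'$ is a set of congruence formulas'' and shows that the condition (Re) fails for the connective $\excl$, i.e.\ $\nowderiv{\Delta'(x_1,x_2),\Delta'(y_1,y_2)}{\Delta'(x_1\excl y_1,x_2\excl y_2)}$, using four variables and placing the propositional disagreement at the far valley of the fence, where only an $\excl$ at zig-zag depth $N$ can detect it. You instead contradict the interderivability $\Delta'(x,y)\dashv\vdash\Delta(x,y)$ directly, by showing $\{(\DN)^{n}(x\leftrightarrow y)\mid n\leq N\}\nvdash_{\log{w}}(\DN)^{N+1}(x\leftrightarrow y)$: two variables, no exclusion in the target formula, and the disagreement parked at a fresh maximal point one zig-zag step beyond the $N$-ball. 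Your countermodel checks out (the $\zz^n$-ball of $v_0$ is $\{v_0,\dots,v_n,u_1,\dots,u_n\}$, which misses both $v_{N+1}$ and $t$ for $n\leq N$, while $v_{N+1}$ enters at step $N+1$ and falsifies $x\leftrightarrow y$ because $t\geq v_{N+1}$), and your discussion of why the witness must sit above $v_{N+1}$ only is exactly the right worry. What each approach buys: yours is leaner and isolates the cleaner structural fact that the $\DN$-tower is strictly increasing in deductive strength, so no finite truncation of $\Delta$ can be equivalent to it; the paper's version pinpoints the failure at the congruence condition for $\excl$, which matches the recurring theme that $\excl$ is the source of $\wbil$'s algebraic misbehaviour and matches the formalised lemma. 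One terminological nit: passing from your countermodel to underivability uses the soundness direction of Theorem~\ref{thm:soundcomplkripke}, not completeness, though the theorem as cited is a biconditional so nothing is lost.
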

\begin{proof}
  For a contradiction assume that $\wbil$ is finitely equivalential.
  By Proposition~\ref{prop:equivmultiplecong} there exists a finite subset $\Delta'(x,y)$ of $\Delta(x,y)$,
  which is a set of congruence formulas for $\wbil$.
  As $\Delta'(x,y)$ is finite, let $n$ be the largest $n$ such that $(\neg\wneg)^n(x \leftrightarrow y) \in \Delta'(x,y)$.
  Next, we show that the property (Re) fails for $\Delta'(x,y)$, therefore showing that it does not constitute
  a set of congruence formulas.
  More precisely, we show that $\nowderiv{\Delta'(x_1,x_2),\Delta'(y_1,y_2)}{\Delta'(x_1\excl y_1,x_2\excl y_2)}$.
  
  To do so, we exhibit a model and a point in it which forces $\Delta'(x_1,x_2)$ and $\Delta'(y_1,y_2)$, but not $\Delta'(x_1\excl y_1,x_2\excl y_2)$.
  The particular formula that our point fails to force is $(\neg\wneg)^n((x_1\excl y_1)\leftrightarrow (x_2\excl y_2))$.

  Consider the following Kripke model $\mo{B}$~(\coqdoc{Alg.wBIH_equivalential.html\#Xmas_lights}).
    \begin{center}
    \begin{tikzpicture}
    
    \draw (0,0) node[point](1a)[label=below:{$\scriptstyle 0$}]{};
    \draw (1.5,1) node[point](1b)[label=above:{$\scriptstyle1$}]{};
    \draw (3,0) node[point](2a)[label=below:{$\scriptstyle2$}]{};
    \draw (4.5,1) node[point](2b)[label=above:{$\scriptstyle3$}]{};
    \draw (6,0) node (empty1) {};
    \draw (7.5,0) node (empty2) {};
    \draw (6.75,0.3) node (dots1) {$\mbox{\fontsize{20}{30}\selectfont\(\cdots\)}$};
    \draw (9,1) node[point](x)[label=above:{$\scriptstyle 2n+1$}]{};
    \draw (10.5,0) node[point](y)[label=below:{$\scriptstyle 2n+2$}]{};
    \path[<-](1b)edge(1a);
    \path[<-](1b)edge(2a);
    \path[<-](2b)edge(2a);
    \path[<-](2b)edge(empty1);
    \path[<-](x)edge(empty2);
    \path[<-](x)edge(y);
    \end{tikzpicture}
  \end{center}

  Let $I$ be the interpretation of $\mo{B}$ such that 
  $I(x_1) = \{k \mid 0\leq k \leq 2n+2\}$ and 
  $I(x_2)=I(y_1)=I(y_2) = \{k \mid 0\leq k \leq 2n+1\}$.
  In essence, this interpretation makes $x_1$ true everywhere, and the other
  variables true everywhere but in $2n+2$. This has the effect that $x_1$ and $x_2$ are equivalent \emph{far enough}
  (from left to right) so that $\Delta'(x_1,x_2)$ and $\Delta'(y_1,y_2)$ hold in $0$,
  but $\Delta'(x_1\excl y_1,x_2\excl y_2)$ does not as we have
  $2n\not\Vdash(x_1\excl y_1)\leftrightarrow (x_2\excl y_2)$ and hence
  $0\not\Vdash(\DN)^n((x_1\excl y_1)\leftrightarrow (x_2\excl y_2))$.
  We can see that $2n\not\Vdash(x_1\excl y_1)\leftrightarrow (x_2\excl y_2)$ as
  we have $2n+1\Vdash(x_1\excl y_1)$ by the valuation in $2n+2$ making $x_1$ true but not $y_1$,
  while $2n+1\Vdash(x_2\excl y_2)$ again because of the valuation.
  Consequently, we have $\Delta'(x_1,x_2),\Delta'(y_1,y_2)\not\locmodels\Delta'(x_1\excl y_1,x_2\excl y_2)$
  hence $\nowderiv{\Delta'(x_1,x_2),\Delta'(y_1,y_2)}{\Delta'(x_1\excl y_1,x_2\excl y_2)}$ by soundness.

  This contradicts our initial assumption on the existence of a finite set of
  congruence formulas.
\end{proof}


\subsection{wBIL has no algebraic semantics over bi-Heyting algebras}

We have shown that $\wbil$ does not have a strong relationship to bi-Heyting
algebras in the sense of algebraizability, but there is an obvious further
question: does it have any relationship at all? The answer, from an abstract
algebraic logic perspective, is no. There is no algebraic semantics that can be
constructed on bi-Heyting algebras for $\wbil$.

Our proof of this statement makes a detour through the Kripke semantics%
\footnote{The idea on which this proof is based was discussed in private communications between
Shillito and Moraschini, which later influenced~\cite[Corollary 9.7]{Mor22}.}%
,
via the construction of a model consisting of a certain arrangement of $2n$
copies of another given model $\mo{M}$.

\begin{definition}[\coqdoc{Alg.wBIH_no_biHA_alg_sem.html\#Xmas_lightsM}]\label{def:modelconstruction}
  Let $\mo{M} = (X,\leq,I)$ be a Kripke model.
  For any $k\in\N$ we define $\mo{M}_k=(X_k,\leq_k,I_k)$ to be a copy of $\mo{M}$
  annotated by $k$, i.e.~where $X_k=\{(x,k)\mid x\in X\}$, $(x,k)\leq_k (y,k)$ if and only if $x\leq y$,
  and $I_k(p) = \{(x,k) \mid x \in I(p)\}$. 
  Now, for $n\in\N$ and $w\in X$, we define 
  $\mo{M}^n_w = (\{c\} \cup \bigcup\limits_{0 \leq k \leq 2n} X_k, \leq', I')$ such that:
  \begin{itemize}
  \item $\leq'$ is the least preorder such that $\leq' \supseteq \bigcup\limits_{0 \leq i \leq n} \leq_i$ and $(w,0) \leq' (w,1) \geq' ... \leq'
  (w,2n-1) \geq' (w,2n)\leq' c$;
  \item $I'(p) = \{c\} \cup \bigcup_{0 \leq k \leq 2n} I_k(p)$ for any $p\in\Prop$.
  \end{itemize}
\end{definition}

We can depict the
model $\mo{M}^n_w$ constructed in the definition above as follows.%
\footnote{In the formalisation we used yet another copy $\mo{M}_{n+1}$
of $\mo{M}$ instead of $c$. The only difference in this last copy is that we
modify the interpretation $I_{n+1}$ to make all propositional variables
everywhere in $X_{n+1}$.}
  \begin{center}
    \begin{tikzpicture}
    
    \draw (0,0) node[point](v)[label=below:{$\scriptstyle (w,0)$}]{};
    \draw (0,0) circle (0.85cm);
    \draw (0,1.1) node (F1) {$\mo{M}_0$};
    \draw (2,1) node[point](u)[label=above:{$\scriptstyle(w,1)$}]{};
    \draw (2,1) circle (0.85cm);
    \draw (2,-0.1) node (F2) {$\mo{M}_{1}$};
    \draw (4,0) node (empty1) {};
    \draw (6,0) node (empty2) {};
    \draw (5,0.3) node (dots1) {$\mbox{\fontsize{20}{30}\selectfont\(\cdots\)}$};
    \draw (8,1) node[point](x)[label=above:{$\scriptstyle(w,2n-1)$}]{};
    \draw (8,1) circle (0.85cm);
    \draw (8,-0.1) node (Fn) {$\mo{M}_{2n-1}$};
    \draw (10,0) node[point](y)[label=below:{$\scriptstyle(w,2n)$}]{};
    \draw (10,0) circle (0.85cm);
    \draw (10,1.1) node (Fu) {$\mo{M}_{2n}$};
    \draw (12,1) node[point](w)[label=below:{$c$}]{};
    \path[<-](w)edge(y);
    \path[<-](u)edge(v);
    \path[<-](u)edge(empty1);
    \path[<-](x)edge(empty2);
    \path[<-](x)edge(y);
    \end{tikzpicture}
  \end{center}

The model is constructed as $n$ zig-zags $\zz$ of copies of $\mo{M}$.
It is built so that $w$ in $\mo{M}$ and $(w,0)$ in $\mo{M}^n_w$
are $2n$-bisimilar (Definition~\ref{def:nbisim}) and therefore force the same formulas
of bi-depth $2n$ (Lemma~\ref{lem:depthnbisim}).
Notably, $w$ and $(w,0)$ force the same formulas of the shape $(\DN)^n\phi$ 
where $d(\phi)=0$.
However, these two points are crucially not $2n+1$ bisimilar,
as formulas of bi-depth $2n+1$ can inspect $c$ which has no
counterpart in $\mo{M}$.
We support our claim of $2n$-bisimilarity below.

\begin{lemma}[\coqdoc{Alg.wBIH_no_biHA_alg_sem.html\#w_Xmas_lightsM_nbisim_M}]\label{lem:constructionbisim}
  For any model $\mo{M}$ and world $w \in \mo{M}$, we have $\mo{M},w \bisim_{2n} \mo{M}^n_w,(w,0)$.
\end{lemma}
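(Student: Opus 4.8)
The plan is to prove the statement by building an explicit \emph{stratified} family of relations indexed by the number of remaining rounds, and then inducting on that number. For each $0\le m\le 2n$ I would set
$$Z_m = \{(x,(x,k)) \mid x\in X,\ 0\le k\le 2n-m\},$$
the idea being that $x$ and its copy $(x,k)$ are $m$-bisimilar precisely when the copy $k$ is ``far enough'' (at least $m$ crossovers away) from the anomalous world $c$. I would then show by induction on $m$ that $(x,(x,k))\in Z_m$ implies $\mo{M},x \bisim_m \mo{M}^n_w,(x,k)$; the lemma is the instance $m=2n$, since $(w,(w,0))\in Z_{2n}$ because $0\le 2n-2n$.

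The essential preliminary step is an order-theoretic analysis of $\leq'$, which is given only as the \emph{least} preorder containing the intra-copy orders $\bigcup_k\leq_k$ together with the zig-zag links $(w,0)\leq'(w,1)\geq'\cdots\geq'(w,2n)\leq' c$. I would argue that every $\leq'$-increasing chain uses \emph{at most one} crossover link (and dually for decreasing chains): the crossovers always point upward from an even-indexed valley $(w,2i)$ to an adjacent odd-indexed peak $(w,2i+1)$ (or from $(w,2n)$ up to $c$), and from a peak no further upward crossover is available, so transitivity cannot chain two crossovers in the same monotone direction. Consequently the upset of $(x,k)$ is contained in the copies $\{k-1,k,k+1\}$, meeting the neighbouring copies $k\pm1$ only through worlds $\geq w$ and only when $x\leq w$; symmetrically for downsets. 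In particular $c$ enters the upset of $(x,k)$ only when $k=2n$.

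With this containment secured the induction is routine. The base case $m=0$ is just clause (B1), which holds since $(x,k)\in I'(p)\iff x\in I(p)$ for every non-anomalous world. For the step I take $(x,(x,k))\in Z_{m+1}$, so $k\le 2n-m-1$, and verify (B1)--(B5) producing matches in $Z_m$ (copy index $\le 2n-m$). The forth clauses (B2),(B3) are immediate: a world $v\le x$ or $v\ge x$ of $\mo{M}$ is matched by $(v,k)$, which remains in copy $k\le 2n-m$, hence in $Z_m$. For the back clauses (B4),(B5) the order analysis gives that any world $\leq'(x,k)$ or $\geq'(x,k)$ has the form $(y,l)$ with $l\in\{k-1,k,k+1\}\subseteq\{0,\dots,2n-m\}$, and the matching world $y$ of $\mo{M}$ satisfies the required inequality: when a crossover is used one has $y\ge w\ge x$ (resp.\ $y\le w\le x$), so $(y,(y,l))\in Z_m$. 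Crucially, since $k\le 2n-m-1<2n$, the anomalous world $c$ never appears in the upset of $(x,k)$, so no back clause is ever forced to match $c$, whose propositional behaviour differs from every world of $\mo{M}$.

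The main obstacle is exactly this order-theoretic analysis of $\leq'$. Because $\leq'$ is specified only as a generated preorder, one must check carefully that transitive closure introduces no unexpected comparabilities, and in particular that two crossovers cannot be chained monotonically; this is what keeps $c$ at ``distance'' $2n+1$ from $(w,0)$ and confines each round's reachable worlds to an adjacent copy. Once the up/down-sets are shown to stay within copies $k$ and $k\pm1$, the verification of the bisimulation clauses and the induction on $m$ proceed mechanically.
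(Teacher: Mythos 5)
Your proposal is correct and follows essentially the same route as the paper, which only gives an intuitive sketch (``moves can be mimicked as long as one has not reached $c$, and the shortest path from $(w,0)$ to $c$ has length $2n+1$'') and defers the details to the Rocq formalisation. Your stratified relations $Z_m$ and the analysis of the generated preorder $\leq'$ (increasing chains use at most one crossover, so up/down-sets stay within adjacent copies) are precisely the rigorous content behind that sketch, so your write-up supplies the details the paper omits rather than a different argument.
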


\begin{proof}
We only provide an intuitive argument, and refer to our formalisation for a more detailed proof.
In essence, moves along $\leq'$ in $\mo{M}^n_w$ can always be mimicked 
in $\mo{M}$ \emph{as long as we have not moved to $c$}.
The reason for this is that $c$ has not corresponding world in $\mo{M}$,
because of how valuation is defined on it. 
Therefore, we get that $w$ and $(w,0)$ are $k$-bisimilar as long as
there is not path allowing us to move from $(w,0)$ to $c$ in $k$ steps.
This is why we can prove $\mo{M},w \bisim_{2n} \mo{M}^n_w,(w,0)$:
the shortest path from $(w,0)$ to $c$ is via all $(w,i)$ for $1\leq i\leq 2n$,
which is of length $2n+1$,
hence all paths of length smaller than $2n$ are safe.
\end{proof}

Our proof that $\wbil$ has no algebraic semantics over bi-Heyting algebras heavily
relies on the construction outlined above, but also on two additional lemmas.

The first lemma is of a general nature, and is given in \cite[Theorem 2.16]{BloReb03} with a more
modern treatment in \cite[Proposition 3.3]{Mor22}. We did not formalise it in our
work, but instead relied on one of its instances within our proof.

\begin{lemma}
  \label{lem:prop33}
  If a logic $\log{L}$ has a $\tau(x)$-algebraic semantics, then for
  all $\epsilon(x) = \delta(x) \in \tau(x)$ and $\phi(q,\overline{p}) \in \form$, 
  $$x,\phi(\epsilon(x),\overline{p}) \dashv\vdash \phi(\delta(x),\overline{p}), x.$$
\end{lemma}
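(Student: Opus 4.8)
The plan is to unfold the definition of $\tau(x)$-algebraic semantics (Definition~\ref{def:taualgsem}) and reduce the two claimed derivabilities to a single equational consequence over $\clalg{K}$, which then follows from the compositionality of the interpretation map. Since the statement $x,\phi(\epsilon(x),\overline{p}) \dashv\vdash \phi(\delta(x),\overline{p}), x$ is symmetric in $\epsilon$ and $\delta$, and the entailments of the premise $x$ on both sides are instances of (I), it suffices to establish the single consecution $x,\phi(\epsilon(x),\overline{p}) \vdash \phi(\delta(x),\overline{p})$; swapping $\epsilon$ and $\delta$ yields the converse.

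First I would use the hypothesis that $\log{L}$ has a $\tau(x)$-algebraic semantics with respect to some class $\clalg{K}$ to turn this consecution into the equivalent equational consequence
$$\tau(x)\cup\tau(\phi(\epsilon(x),\overline{p})) \vDash_\clalg{K} \tau(\phi(\delta(x),\overline{p})).$$
Here the crucial role of the premise $x$ is that $\tau$ applied to the formula $x$ is exactly the set $\tau(x)$ itself, which by hypothesis contains the equation $\epsilon(x)=\delta(x)$. So I would fix $\alg{A}\in\clalg{K}$ and a valuation $\val$ satisfying every equation on the left, and read off from the $\tau(x)$-part the key identity $\int{\val}(\epsilon(x))=\int{\val}(\delta(x))$.

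The heart of the argument is then compositionality: since $\int{\val}$ is induced by a homomorphism from the formula algebra, the interpretation of a formula depends on a subformula only through that subformula's interpretation. Because $\phi(\epsilon(x),\overline{p})$ and $\phi(\delta(x),\overline{p})$ arise by substituting $\epsilon(x)$ and $\delta(x)$ for the same variable $q$ in $\phi(q,\overline{p})$, and these substituends have equal interpretation, I would conclude $\int{\val}(\phi(\epsilon(x),\overline{p}))=\int{\val}(\phi(\delta(x),\overline{p}))$. Applying the same principle once more, for every equation $(\epsilon'=\delta')\in\tau$ the value $\int{\val}(\epsilon'(\psi))$ depends on $\psi$ only through $\int{\val}(\psi)$, so the equalities encoded in $\tau(\phi(\epsilon(x),\overline{p}))$ transfer verbatim to $\tau(\phi(\delta(x),\overline{p}))$, discharging the equational goal and hence the consecution.

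I expect the only real obstacle to be notational: keeping the two layers of substitution straight — the inner substitution of $\epsilon(x)$ or $\delta(x)$ for $q$ inside $\phi$, and the outer substitution of $\phi(\cdot,\overline{p})$ for $x$ inside each defining equation of $\tau$ — and invoking the substitution lemma (compositionality of $\int{\val}$) at the right granularity. No ingredient beyond Definition~\ref{def:taualgsem} and the homomorphism property of interpretations is required; in particular the premise $x$ is used solely to guarantee $\int{\val}(\epsilon(x))=\int{\val}(\delta(x))$, and without it the claim would fail.
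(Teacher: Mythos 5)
Your proof is correct and is essentially the standard argument: the paper does not prove this lemma itself but defers to Blok--Rebagliato and Moraschini, where the proof is exactly your reduction of the consecution to an equational consequence via the defining biconditional of a $\tau(x)$-algebraic semantics, followed by two applications of compositionality of $\int{\val}$ (once to pass from $\int{\val}(\epsilon(x))=\int{\val}(\delta(x))$ to $\int{\val}(\phi(\epsilon(x),\overline{p}))=\int{\val}(\phi(\delta(x),\overline{p}))$, and once to transfer the equations of $\tau$ applied to these two formulas). The only point worth tightening is the phrase ``swapping $\epsilon$ and $\delta$ yields the converse'': the set $\tau(x)$ need not contain the reversed equation $\delta(x)=\epsilon(x)$, but this is harmless because your argument only ever uses the symmetric semantic fact $\int{\val}(\epsilon(x))=\int{\val}(\delta(x))$, which the premise $x$ supplies in both directions.
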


For the second lemma, we take inspiration from Moraschini~\cite[Proposition 9.6]{Mor22}.

\begin{lemma}[\coqdoc{Alg.wBIH_no_biHA_alg_sem.html\#alg_eq_iff_wBIH_eqprv}]
  \label{lem:tommolem}
  For any $\phi,\psi \in \form_{BI}$,
  $$
  \biha \vDash \phi = \psi \iff \phi \dashv\vdash_w \psi.
  $$
\end{lemma}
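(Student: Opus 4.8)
The plan is to factor the biconditional through the \emph{theorems} shared by $\wbil$ and $\sbil$, exploiting that $\biha$ is the equivalent algebraic semantics of $\sbil$ while the unmodified deduction theorem is available only for $\wbil$. Concretely, I would establish the chain of equivalences
\[
\biha \vDash \phi = \psi
\;\iff\; \sderiv{}{\phi\leftrightarrow\psi}
\;\iff\; \wderiv{}{\phi\leftrightarrow\psi}
\;\iff\; \phi \dashv\vdash_w \psi,
\]
reading the first biconditional off the algebraizability of $\sbil$, the second from the fact that the two logics share their theorems, and the third from the deduction theorem for $\wbil$.

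For the first equivalence I would invoke condition (ALG2) of Definition~\ref{def:algebraizable}, which holds for $\sbil$ with respect to $\biha$ by Theorem~\ref{thm:algbiha}, instantiated with the empty premise set $\Theta=\emptyset$, $\epsilon=\phi$ and $\delta=\psi$. Since the equivalence formula is $p\leftrightarrow q$ and $\Delta(\emptyset)=\emptyset$, this reads exactly $\biha\vDash\phi=\psi \iff \sderiv{}{\phi\leftrightarrow\psi}$. The same equivalence can be obtained alternatively from soundness (Proposition~\ref{prop:sound}) and completeness (Proposition~\ref{prop:compl}) of $\sbil$, together with the algebraic fact already used in Proposition~\ref{prop:alg4} that $a\aleftrightarrow b=\atop$ iff $a=b$ in every bi-Heyting algebra, which turns $\biha\vDash(\phi\leftrightarrow\psi)=\top$ into $\biha\vDash\phi=\psi$.

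For the middle equivalence I would use that $\wbil$ and $\sbil$ have the same theorems. One inclusion is immediate since $\sbil$ extends $\wbil$; for the converse, note that in any derivation of $\sderiv{}{\chi}$ the context remains empty throughout (the rules $\GHrule{MP}$ and $\GHrule{sDN}$ preserve the root context, $\GHrule{El}$ is unavailable on the empty context, and $\GHrule{Ax}$ is a leaf), so every application of $\GHrule{sDN}$ has the form ``from $\sderiv{}{\theta}$ infer $\sderiv{}{\DN\theta}$'', which is precisely an instance of $\GHrule{wDN}$; hence the derivation transports verbatim into $\wbil$. The last equivalence is then the deduction theorem for $\wbil$ (Lemma~\ref{lem:derivations}.4) applied both ways, combined with the behaviour of $\leftrightarrow$ as a conjunction of implications: $\wderiv{}{\phi\leftrightarrow\psi}$ holds iff $\wderiv{}{\phi\to\psi}$ and $\wderiv{}{\psi\to\phi}$, iff $\wderiv{\phi}{\psi}$ and $\wderiv{\psi}{\phi}$, i.e.\ $\phi\dashv\vdash_w\psi$. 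The only genuinely delicate point, and the reason the lemma is stated for $\wbil$ rather than $\sbil$, lies in the interplay of these last two steps: passing to the empty context is what collapses the $\wbil$/$\sbil$ distinction, and it is essential that the \emph{unmodified} deduction theorem of $\wbil$ converts the theorem $\phi\leftrightarrow\psi$ into the two-way consequence $\phi\dashv\vdash_w\psi$. The analogous statement for $\sbil$ fails precisely here, as $\sbil$ only enjoys the double-negation-shifted deduction theorem of Lemma~\ref{lem:derivations}.5, so I would take care to route the argument through $\wbil$ at exactly this step rather than attempting to relate $\biha$-validity to $\sbil$-consequence directly.
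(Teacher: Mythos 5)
Your proposal is correct and follows essentially the same route as the paper: algebraizability of $\sbil$ over $\biha$, the agreement of $\wbil$ and $\sbil$ on theorems, and the unmodified deduction theorem for $\wbil$. The only (immaterial) difference is that you invoke (ALG2) with $\Theta=\emptyset$ to pass through $\phi\leftrightarrow\psi$ in one step, whereas the paper applies its algebraizability result to the two implications $\phi\to\psi$ and $\psi\to\phi$ separately and then concludes $\int{\val}(\phi)=\int{\val}(\psi)$ by antisymmetry of $\leq$.
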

\begin{proof}
  First, assume that $\phi \dashv\vdash_w \psi$. 
  By the deduction theorem we get $\wderiv{}{\phi \to \psi}$ and $\wderiv{}{\psi \to \phi}$,
  which imply $\sderiv{}{\phi \to \psi}$ and $\sderiv{}{\psi \to \phi}$ as $\wbil$ and $\sbil$
  agree on theorems~(\coqdoc{GenHil.BiInt_extens_interactions.html\#sBIH_wBIH_same_thms}).
  By Theorem~\ref{thm:algbiha}, for every bi-heyting
  algebra $\alg{A}$ and valuation $\val$ over $\alg{A}$, 
  $\int{\val}(\phi \to \psi) = \atop$ and $\int{\val}(\psi \to \phi) = \atop$,
  hence $\int{\val}(\phi) \leq \int{\val}(\psi)$ and $\int{\val}(\psi) \leq \int{\val}(\phi)$.
  The last two statements imply $\int{\val}(\phi)=\int{\val}(\psi)$ via antisymmetry of $\leq$.
  As $\alg{A}$ and $\val$ are arbitrary, we proved $\biha\vDash \phi = \psi$. 

  One can prove the other direction by following the steps above, which are for most
  them equivalences except for Theorem~\ref{thm:algbiha} which needs to be
  replaced by Proposition~\ref{prop:sound}.
\end{proof}

We now have all the ingredients to prove that there is 
no algebraic semantics on $\biha$ for $\wbil$.
While the following proof is heavily inspired by Moraschini~\cite[Proposition 9.6]{Mor22},
our case required a serious alteration of the model construction used in his proof.%
\footnote{The model construction in Moraschini's proof consists in the addition of
a single extra world situated below a certain point in the initial model.
For our case we needed the more complex construction given in
Definition~\ref{def:modelconstruction}.}

\begin{theorem}[\coqdoc{Alg.wBIH_no_biHA_alg_sem.html\#No_wBIH_alg_sem}]
  \label{thm:notcompletealgebraiccounterpart}
  There is no $\tau$-algebraic semantics with respect to $\biha$ for which $\wbil$ is complete.
\end{theorem}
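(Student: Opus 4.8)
The plan is to argue by contradiction, using the necessary condition for possessing an algebraic semantics recorded in Lemma~\ref{lem:prop33} and refuting it with a countermodel built from the construction of Definition~\ref{def:modelconstruction}. So suppose $\wbil$ had a $\tau(x)$-algebraic semantics with respect to $\biha$. Then by Lemma~\ref{lem:prop33}, for every defining equation $\epsilon(x)=\delta(x)\in\tau(x)$ and every context $\phi(q,\overline p)\in\form$ we would have
\[
  x,\ \phi(\epsilon(x),\overline p)\ \dashv\vdash_w\ \phi(\delta(x),\overline p),\ x;
\]
call this interderivability $(\star)$. First I would observe that $\tau(x)$ must contain at least one equation $\epsilon(x)=\delta(x)$ that is \emph{not} valid in $\biha$: otherwise $\biha\vDash\tau(\phi)$ for every $\phi$, and the defining equivalence of a $\tau$-algebraic semantics (Definition~\ref{def:taualgsem}) would make every formula a theorem of $\wbil$, which is absurd. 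Fixing such an equation, Lemma~\ref{lem:tommolem} turns its non-validity into $\epsilon(x)\not\dashv\vdash_w\delta(x)$, so by completeness for the local consequence relation (Theorem~\ref{thm:soundcomplkripke}) there is a model $\mo{M}$ and a world $w$ separating the two sides, say $\mo{M},w\Vdash\epsilon(x)$ while $\mo{M},w\not\Vdash\delta(x)$. Crucially, this fixed equation has bounded bi-depth, i.e.\ $d(\epsilon(x)),d(\delta(x))\leq N$ for some fixed $N$, and I may choose the context $\phi$ afterwards in terms of $N$.

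The heart of the refutation is to choose $\phi$ so that $\excl$ relocates this difference to a world that forces $x$. Instantiating $(\star)$ at the trivial context $\phi(q)=q$ shows that $\epsilon(x)$ and $\delta(x)$ must already agree at every world forcing $x$; hence the separating world $w$ may be assumed to be one at which $x$ \emph{fails} (if $x$ held there, the trivial context would already refute $(\star)$, and we would be done). Since the clause for $\excl$ looks \emph{backward} along $\leq$, a context of the form $\phi(q,\overline p)=r\excl q$ evaluated at a world $t$ inspects the hole at worlds $s\leq t$, which need not force $x$. I would therefore feed $\mo{M}$ and $w$ into the parametric construction $\mo{M}^n_w$ of Definition~\ref{def:modelconstruction} with $n$ chosen larger than $N$. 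By Lemma~\ref{lem:constructionbisim} the point $(w,0)$ is $2n$-bisimilar to $w$, so by Lemma~\ref{lem:depthnbisim} the bounded-depth formulas $\epsilon(x),\delta(x)$ take the same values at $(w,0)$ as at $w$; this lets me transport the separation faithfully into the construction while keeping the defining equation blind to the deeper structure. Using the all-true point $c$ to host a world $t$ forcing $x$, together with the zig-zag layering whose semantics is the jellyfish reading of $\DN$ from Lemma~\ref{lem:jellyfish}, I would valuate $r$ so that some $s\leq t$ forces $r$ and witnesses $\epsilon(x)$ without $\delta(x)$, whereas no $r$-world below $t$ falsifies $\epsilon(x)$. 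This yields $\mo{M}^n_w,t\Vdash r\excl\delta(x)$ but $\mo{M}^n_w,t\not\Vdash r\excl\epsilon(x)$ at a world forcing $x$, so by completeness $x,\phi(\delta(x),\overline p)\not\vdash_w\phi(\epsilon(x),\overline p)$, contradicting $(\star)$. Hence no such $\tau$ exists.

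The main obstacle is the concrete design of the context $\phi$ and of the valuation of $x$ and $r$ on $\mo{M}^n_w$: one must make the disagreement between $\phi(\epsilon(x),\overline p)$ and $\phi(\delta(x),\overline p)$ surface at a world that genuinely forces $x$ (so that $(\star)$ applies), while guaranteeing that the fixed defining equation, of bounded bi-depth $N$, cannot itself detect the added point $c$ or otherwise spoil the configuration. This is exactly why the single extra world used by Moraschini~\cite[Proposition~9.6]{Mor22} does not suffice and the parametric zig-zag of $2n$ copies of $\mo{M}$ is needed: the equation furnished by the hypothetical $\tau$ may have arbitrarily large depth $N$, so only a construction whose bisimilarity rank $2n$ can be pushed beyond $N$ ensures that the equation is fooled while the deeper, $\excl$-sensitive context is not.
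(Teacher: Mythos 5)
Your skeleton matches the paper's: contradiction via Lemma~\ref{lem:prop33}, the observation that $\tau$ must contain a $\biha$-invalid equation (same consistency argument), Lemma~\ref{lem:tommolem} plus Kripke completeness to extract a separating pair $\mo{M},w$, and the zig-zag model $\mo{M}^n_w$ with $n$ exceeding the bi-depth of the equation, justified for exactly the right reason (the equation's depth is fixed but unbounded a priori, so the bisimilarity rank must be pushed past it). The gap is in the one step you flag as ``the main obstacle'' and then leave unresolved: your proposed context $\phi(q,\overline p)=r\excl q$ cannot work. A single $\excl$ evaluated at $t$ only inspects the immediate down-set of $t$, and in $\mo{M}^n_w$ the down-set of $c$ consists of $c$ and worlds in the copy $\mo{M}_{2n}$ below $(w,2n)$. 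None of these is guaranteed to separate $\epsilon(x)$ from $\delta(x)$: the copy $(w,2n)$ sits directly below $c$, so its theory is perturbed by $c$ (it is only $0$-bisimilar to $w$, roughly speaking), while the faithful copy $(w,0)$ lies at zig-zag distance $n+1$ from $c$ and is unreachable by one backward step. This tension is not incidental --- it is the whole point of the construction: any world forcing $x$ that is close enough for a depth-$1$ context to see a faithful copy of $w$ is also close enough for the equation itself to detect, which would destroy the separation. So a context of bounded depth cannot suffice; you need one whose depth grows with $n$.

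The paper resolves this by taking $\phi(q,\overline p)=(\DN)^{n+1}(q\to\delta(x))$. Then $\phi(\delta(x))=(\DN)^{n+1}(\delta(x)\to\delta(x))$ is a theorem of $\wbil$ (by $n{+}1$ applications of \GHrule{wDN}), hence forced everywhere, while by Lemma~\ref{lem:jellyfish} the formula $\phi(\epsilon(x))=(\DN)^{n+1}(\epsilon(x)\to\delta(x))$ fails at $c$, because $c\zz^{n+1}(w,0)$ and $(w,0)$ refutes $\epsilon(x)\to\delta(x)$ (it forces $\epsilon(x)$ but not $\delta(x)$, transported from $w$ by Lemmas~\ref{lem:constructionbisim} and~\ref{lem:depthnbisim}). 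Since $c$ forces $x$, this refutes the direction $x,\phi(\delta(x))\vdash_{\log{w}}\phi(\epsilon(x))$ of Lemma~\ref{lem:prop33}. In short: the mechanism for relocating the disagreement to a world forcing $x$ is not a single backward-looking connective but the iterated $\DN$ modality over the zig-zag relation, and without that choice of context your argument does not close. (Your side remark that $w$ may be assumed to falsify $x$, via the trivial context, is correct but is not needed once the right context is in place.)
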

\begin{proof}
  Suppose, for a contradiction, that $\wbil$ is complete with respect to a $\tau$-algebraic semantics with respect to $\biha$. 

  We first establish that there must be an equation $\epsilon(x)=\delta(x)\in\tau$
  such that $\biha \not\vDash \epsilon(x) = \delta(x)$~(\coqdoc{Alg.wBIH_no_biHA_alg_sem.html\#Eq_elem_invalid}).
  If this is not the case, then we would have $\wderiv{}{x}$ by
  the Definition~\ref{def:taualgsem} of $\tau$-algebraic semantics and the
  assumption that $\wbil$ is complete for it.
  By structurality of $\wbil$, this implies that $\wderiv{}{\bot}$.
  But this is a contradiction, as $\wbil$ can be proved consistent
  via soundness with respect to the Kripke semantics (Theorem~\ref{thm:soundcomplkripke}).

  By Lemma~\ref{lem:tommolem} we can assume, without loss of generality, 
  that $\nowderiv{\epsilon(x)}{\delta(x)}$. 
  Thus there exists a model $\mo{M} = (W,\leq,I)$ and a world $w \in W$
  such that $\mo{M}, w \Vdash \epsilon(x)$ and $\mo{M}, w \not \Vdash \delta(x)$. 

  Now, we exploit the model $\mo{M}^{n}_w$ generated from $\mo{M}$ and $w$ following
  Definition~\ref{def:modelconstruction}, where $n = \max(d(\epsilon(x)), d(\delta(x)))$.
  By Lemma~\ref{lem:constructionbisim} and Lemma~\ref{lem:depthnbisim}, we get 
  $\mo{M}^n_w, (w,0) \Vdash \epsilon(x)$ and $\mo{M}^n_w, (w,0) \not \Vdash \delta(x)$.
  Given the zig-zag structure of $\mo{M}^n_w$, we can use Lemma~\ref{lem:jellyfish}
  to show that $\mo{M}^n_w, c \not\Vdash (\neg\wneg)^{n+1}(\epsilon(x) \to \delta(x))$,
  as $c$ can reach out to $(w,0)$ in $n+1$ steps of $\zz$,
  and $(w,0)$ witnesses the failure of the implication $\epsilon(x)\to\delta(x)$.
  Additionally, we get that $\mo{M}^n_w, c \Vdash x$ by definition of the valuation in $c$.
  Consequently, we have a proof of $x\not\globmodels(\neg\wneg)^{n+1}(\epsilon(x) \to \delta(x))$ which
  gives us $\nowderiv{x}{(\neg\wneg)^n(\epsilon(x) \to \delta(x))}$.
  As we additionally have that $\wderiv{}{(\neg\wneg)^{n+1} (\delta(x) \to \delta(x))}$
  via $n+1$ applications of the rule \GHrule{wDN}, we can infer 
  $\nowderiv{x,(\neg\wneg)^{n+1} (\delta(x) \to \delta(x))}{(\neg\wneg)^n(\epsilon(x) \to \delta(x))}$.
  But this is a contradiction, as Lemma~\ref{lem:prop33} informs us that this statement should hold
  if $\tau$ really is a $\tau$-algebraic semantics for $\wbil$.
  
  As our assumption on the existence of a $\tau$-algebraic semantics for which $\wbil$ is complete
  led to a contradiction, we know that such a $\tau$ cannot exist.
\end{proof}


\section{Leveraging bridge theorems}\label{sec:bridge}

In the previous section, we situated the logics $\sbil$ and $\wbil$ in the Leibniz hierarchy,
by showing that they were respectively finitely algebraizable and equivalential.
We can now harvest the fruits of our hard labour: we demonstrate the
use of the \AAL~approach by exploiting the so-called \emph{bridge theorems},
which equate properties of logics with properties of algebras.

While $\wbil$ does not have a strong link with its natural class of algebras $\biha$, 
the algebraizability of $\sbil$ with respect to $\biha$ shows that the connection
existing between the logic and the algebras is extremely tight.
We use this tight connection and bridge theorems for algebraizable logics 
to show properties of both $\sbil$ and $\biha$.

\subsection{Craig interpolation and amalgamation}

The first bridge theorem we inspect connects \emph{Craig interpolation}
to \emph{amalgamation}. We start by defining the notion of Craig interpolation
following Czelakowski and Pigozzi \cite[Definition 3.1]{CzePig99}.%
\footnote{Arguably this is not Craig interpolation in the usual sense.
Traditionally, Craig Interpolation is expressed as the existence of an
interpolant $\chi(\overline{q})$ of a provable implication
$\vdash\phi(\overline{p},\overline{q}) \to \psi(\overline{q},\overline{r})$, 
i.e.~such that $\vdash \phi(\overline{p},\overline{q}) \to \chi(\overline{q})$
and $\vdash \chi(\overline{q}) \to \psi(\overline{q}, \overline{r})$.
Still, Definition~\ref{def:craigint} can be seen as a language-agnostic
abstraction of the usual meaning of Craig Interpolation.}

\begin{definition}[Craig Interpolation]\label{def:craigint}
  A logic $\log{L}$ is said to have \emph{Craig interpolation} if for
  $\Gamma(\overline{p},\overline{q}) \vdash \phi(\overline{q},\overline{r})$ then 
  there exists a $\Gamma'(\overline{q})$ such that $\Gamma(\overline{p},
  \overline{q}) \vdash \Gamma'(\overline{q})$ and
  $\Gamma'(\overline{q}) \vdash \phi(\overline{q}, \overline{r})$.
\end{definition}

Next, we define the algebraic property called \emph{amalgamation}~\cite[Definition
5.2]{CzePig99}.

\begin{definition}[Amalgamation]
  A variety $\clalg{K}$ has the amalgamation property if, for any given pair of
  injective homomorphisms $f : \alg{C} \to \alg{A}$ and $g : \alg{C} \to \alg{B}$ with $\alg{A},\alg{B},\alg{C} \in\clalg{K}$ there
  exists a $\alg{D} \in\clalg{K}$ and injective homomorphisms $h : \alg{A} \to \alg{D}$ and $k : \alg{B} \to \alg{D}$
  such that $h \circ f = k \circ g$.
\end{definition}

The bridge theorem connecting these two notions can be found in the same work
of Czelakowski and Pigozzi~\cite[Theorem 3.6,Corollary 5.28]{CzePig99}.

\begin{theorem}\label{thm:bridgecraig}
If a logic $\log{L}$ is algebraizable with equivalent algebraic semantics $\clalg{K}$, then the following holds.
\begin{center}
\begin{tabular}{c c c}
$\log{L}$ has Craig interpolation & iff & $\clalg{K}$ has amalgamation.
\end{tabular}
\end{center}
\end{theorem}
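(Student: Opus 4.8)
The plan is to exploit the full strength of algebraizability to transport the syntactic interpolation condition of Definition~\ref{def:craigint} into an equivalent condition on the equational consequence $\vDash_\clalg{K}$, and then to recognise that latter condition as amalgamation. Since $\log{L}$ is algebraizable with equivalent algebraic semantics $\clalg{K}$, the translations $\tau(x)$ and $\Delta(x,y)$ furnish mutually inverse passages between $\vdash_{\log{L}}$ and $\vDash_\clalg{K}$ (conditions ALG1--ALG4 of Definition~\ref{def:algebraizable}). My first step would be to use these to rephrase ``$\Gamma(\overline{p},\overline{q}) \vdash \phi(\overline{q},\overline{r})$ admits an interpolant $\Gamma'(\overline{q})$'' as a purely equational statement: the equational consequence $\tau(\Gamma) \vDash_\clalg{K} \tau(\phi)$ should factor through equations in the shared variables $\overline{q}$. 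Because $\clalg{K}$ is a variety, this reduces the problem to an interpolation property for the equational theory of $\clalg{K}$.

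The second and central step is to identify this equational interpolation with amalgamation, which I would carry out through free algebras and their relative congruences. For the direction ``amalgamation $\Rightarrow$ Craig interpolation'', I would argue contrapositively: if $\Gamma \vdash \phi$ had no interpolant, I would present the deductive closures over the variable sets $\{\overline{p},\overline{q}\}$ and $\{\overline{q},\overline{r}\}$ as two algebras in $\clalg{K}$ sharing the subalgebra $\alg{C}$ generated by $\overline{q}$, amalgamate them into some $\alg{D}\in\clalg{K}$, and read off from the amalgam a single valuation satisfying $\Gamma$ while refuting $\phi$, contradicting $\Gamma\vdash\phi$. For the converse, given injective homomorphisms $f:\alg{C}\to\alg{A}$ and $g:\alg{C}\to\alg{B}$, I would build the candidate amalgam as the pushout of $f$ and $g$ in $\clalg{K}$ (the $\alg{C}$-amalgamated free product) and use interpolants to show that any equation between images of $\alg{A}$-elements holding in the pushout is already forced in $\alg{C}$; this yields injectivity of the canonical maps $h,k$ together with the commuting square $h\circ f=k\circ g$.

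Throughout, the Isomorphism Theorem (Theorem~\ref{thrm:theisothrm}) is the workhorse that lets me pass between deductive filters on free algebras and relative congruences $Con_\clalg{K}$, which is exactly what legitimises the ``present by generators and relations, then glue'' argument. The main obstacle, and the point where genuine care is required, is the bookkeeping of the variable-sharing condition: matching the common variables $\overline{q}$ of the interpolation statement to the overlap algebra $\alg{C}$ of the amalgamation diagram, and verifying that the relevant maps stay \emph{injective} rather than being merely homomorphisms. Aligning the presentations of $\alg{A},\alg{B},\alg{C}$ with the deductive closures over the respective variable sets, and checking that the pushout computed inside $\clalg{K}$ agrees with the one suggested by the syntax, is where the real difficulty lies. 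Since this is a known bridge theorem, I would ultimately defer the detailed verification to Czelakowski and Pigozzi~\cite{CzePig99}, whose Theorem~3.6 and Corollary~5.28 establish precisely this correspondence.
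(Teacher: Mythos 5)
The paper does not prove this theorem at all: it is imported verbatim from Czelakowski and Pigozzi~\cite[Theorem 3.6, Corollary 5.28]{CzePig99}, consistent with the paper's stated policy of assuming general \AAL~results and only formalising facts specific to $\wbil$, $\sbil$ and $\biha$. Your sketch of the underlying argument (translating interpolation into an equational statement via ALG1--ALG4, then matching it with amalgamation through free algebras, the Isomorphism Theorem and pushouts) is a reasonable outline of the standard proof, and since you ultimately defer to the same reference for the delicate variable-bookkeeping, your approach coincides with the paper's.
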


Using this bridge, we can prove the following result about $\sbil$ and $\biha$.

\begin{corollary}
$\sbil$ has Craig interpolation and $\biha$ has amalgamation.
\end{corollary}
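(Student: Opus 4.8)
The plan is to invoke the bridge theorem (Theorem~\ref{thm:bridgecraig}). By Theorem~\ref{thm:algbiha} together with Corollary~\ref{cor:biheytingequivalg}, the logic $\sbil$ is algebraizable with equivalent algebraic semantics $\biha$, so the hypothesis of Theorem~\ref{thm:bridgecraig} is met. The bridge then delivers the equivalence
$$\sbil \text{ has Craig interpolation} \iff \biha \text{ has amalgamation},$$
so it suffices to prove either one of the two conjuncts: the other follows automatically across the bridge, and thus both halves of the corollary hold simultaneously.

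Since the stated aim of this section is to \emph{harvest} a property of $\sbil$ from the algebraic side, I would establish the amalgamation property for the variety $\biha$ directly and then read off Craig interpolation for $\sbil$ via Theorem~\ref{thm:bridgecraig}. Concretely, given bi-Heyting algebras $\alg{A},\alg{B},\alg{C}\in\biha$ and injective homomorphisms $f:\alg{C}\to\alg{A}$ and $g:\alg{C}\to\alg{B}$, I would construct a common extension $\alg{D}\in\biha$ with injective $h:\alg{A}\to\alg{D}$, $k:\alg{B}\to\alg{D}$ satisfying $h\circ f = k\circ g$. The cleanest route is through the order-topological duality for bi-Heyting algebras (bi-Esakia spaces): dualise $f,g$ to p-morphisms of dual spaces, form the appropriate fibre-product of spaces, verify it is again a bi-Esakia space, and dualise back to obtain $\alg{D}$; injectivity of $h,k$ corresponds to surjectivity of the dual maps, which the fibre-product guarantees.

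The main obstacle I anticipate is exactly this amalgamation step for $\biha$. While amalgamation for Heyting algebras is classical (via Maksimova-style superintuitionistic interpolation, or via Esakia duality), the co-implication $\aexcl$ and its residuation law $(ResE)$ force the amalgam to respect the \emph{downward} (co-Heyting) structure as well as the usual Heyting structure; dually, the spaces must be glued so that both the Esakia condition and its order-dual hold at once. Checking that the fibre-product of bi-Esakia spaces remains a bi-Esakia space, and that the induced operation on $\alg{D}$ is genuinely residuated in the sense of $(ResE)$, is the delicate point. Should the duality-based construction prove awkward, I would fall back on the logical side and prove Craig interpolation for $\sbil$ proof-theoretically, extracting interpolants from derivations and using that $\sbil$ and $\wbil$ share the same theorems while handling the strong consequence via the modified deduction theorem (item~(5) of Lemma~\ref{lem:derivations}), then transferring back to amalgamation through the same bridge.
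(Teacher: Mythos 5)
Your overall strategy is the same as the paper's: invoke Theorem~\ref{thm:bridgecraig} after noting (via Theorem~\ref{thm:algbiha} and Corollary~\ref{cor:biheytingequivalg}) that $\sbil$ is algebraizable with equivalent algebraic semantics $\biha$, and then observe that it suffices to establish one side of the equivalence. The difference lies in how that one side is discharged. The paper simply cites Galli, Reyes and Sagastume, who proved that $\biha$ has \emph{strong} amalgamation (hence amalgamation), and it also records the inverse route: $\wbil$ is known to have Craig interpolation in the usual theorem-level sense, $\sbil$ shares its theorems, and the consequence-level interpolation of Definition~\ref{def:craigint} is recovered using finitarity, conjoining a finite premise set, and item~(5) of Lemma~\ref{lem:derivations}. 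Your fallback is essentially this second route, so that part is on target.

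The one genuine shortfall is your primary plan: proving amalgamation of $\biha$ from scratch via a fibre product of bi-Esakia spaces is left as a sketch whose crux --- that the fibre product is again a bi-Esakia space and that the dual maps are surjective p-morphisms in both the Heyting and co-Heyting directions, so that dualising yields injective embeddings into a genuine member of $\biha$ --- is exactly the point you flag and do not carry out. Already for Heyting algebras, amalgamation is not a formal consequence of duality but a substantive theorem (Maksimova's analysis shows it fails for most subvarieties), so this step cannot be waved through. As written, your proposal establishes the reduction but not the input fact; it becomes a complete proof once you either cite the known (strong) amalgamation result for $\biha$, as the paper does, or execute the interpolation-first fallback, which in turn leans on the cited syntactic interpolation results for $\wbil$ rather than on a from-scratch proof-theoretic argument.
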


\begin{proof}
It was proved by Galli, Reyes and Sagastume~\cite{GallReySag03} that $\biha$ has strong amalgamation,
a property which implies amalgamation. So, we know that $\biha$ has
amalgamation, and using Theorem~\ref{thm:bridgecraig} we can conclude that
$\sbil$ has Craig interpolation.

An alternative, and inverse, proof starts from the fact that $\wbil$ has Craig interpolation
in the usual form~\cite{KowOno17,LyoTiuGorClou18}.
As $\wbil$ and $\sbil$ coincide on their theorems, which is where the usual notion of 
Craig interpolation is set, we therefore get that $\sbil$ has the usual Craig interpolation.
Now, we can show that Craig interpolation in the sense of Definition~\ref{def:craigint}
can be obtained using the usual sense.
Assume that $\sderiv{\Gamma(\overline{p},\overline{q})}{\phi(\overline{q},\overline{r})}$.
Then, as $\sbil$ is a finitary logic, there must be a finite subset 
$\Delta(\overline{p},\overline{q})\subseteq\Gamma(\overline{p},\overline{q})$ such that
$\sderiv{\Delta(\overline{p},\overline{q})}{\phi(\overline{q},\overline{r})}$.
Therefore, we can form the conjunction of all its elements and get
$\sderiv{\bigwedge\Delta(\overline{p},\overline{q})}{\phi(\overline{q},\overline{r})}$.
By Lemma~\ref{lem:derivations} we get that there is a $n\in\N$ such that
$\sderiv{}{(\DN)^n(\bigwedge\Delta(\overline{p},\overline{q}))\to\phi(\overline{q},\overline{r})}$.
Therefore we can use the usual Craig interpolation to get a $\chi(\overline{q})$ such that
$\sderiv{}{(\DN)^n(\bigwedge\Delta(\overline{p},\overline{q}))\to\chi(\overline{q})}$ and
$\sderiv{}{\chi(\overline{q})\to\phi(\overline{q},\overline{r})}$.
These last statements straightforwardly lead to
$\sderiv{\Delta(\overline{p},\overline{q})}{\chi(\overline{q})}$ and
$\sderiv{\chi(\overline{q})}{\phi(\overline{q},\overline{r})}$.
So, $\chi(\overline{q})$ is the $\Gamma'(\overline{q})$ we were looking for.
As Craig interpolation in the sense of Definition~\ref{def:craigint} holds for $\sbil$,
we therefore get that $\biha$ has amalgamation via Theorem~\ref{thm:bridgecraig}.
\end{proof}

\subsection{Right deductive uniform interpolation and coherence}

The second bridge theorem we consider relates \emph{right deductive uniform interpolation}
with \emph{coherence}, which we both define next.

\begin{definition}[Right deductive uniform interpolation]
A logic $\log{L}$ is said to have \emph{right deductive uniform interpolation} if for
any formula $\phi$ and $p\in\Prop$, there is a formula $\chi$ such that
(1) $p$ does not appear in $\chi$, 
(2) $\phi\vdash\chi$, and 
(3) for any formula $\psi$ not containing $p$ such that
$\phi\vdash\psi$ we have $\chi\vdash\psi$.
\end{definition}

\begin{definition}[Coherence]\label{def:coherence}
A variety $\clalg{V}$ is \emph{coherent}
if every finitely generated subalgebra of a finitely presented member
of $\clalg{V}$ is finitely presented.
\end{definition}

The bridge theorem we are interested in connects the both notions in the following way~\cite[Proposition 2.4]{KowMet19}.

\begin{theorem}\label{thm:bridgecoherence}
If a logic $\log{L}$ is algebraizable with equivalent algebraic semantics $\clalg{K}$, then the following holds.
\begin{center}
\begin{tabular}{c c c}
$\log{L}$ satisfies right deductive uniform interpolation & iff & $\clalg{K}$ is coherent and admits deductive interpolation.
\end{tabular}
\end{center}
\end{theorem}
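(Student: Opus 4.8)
The plan is to exploit the algebraic--logical dictionary furnished by the algebraizability of $\log{L}$, under which $\log{L}$ and the variety $\clalg{K}$ are two presentations of the same data, so that the two sides of the biconditional can be matched piece by piece. Recall that algebraizability supplies defining equations $\tau(x)$ and equivalence formulas $\Delta(x,y)$ satisfying (ALG1)--(ALG4), and, via the Isomorphism Theorem (Theorem~\ref{thrm:theisothrm}), a lattice isomorphism between $\log{L}$-theories and $\clalg{K}$-congruences on the formula algebra. First I would set up the finer version of this correspondence that the statement needs: a finitely presented member of $\clalg{K}$ corresponds to the quotient of a free algebra $\mathbf{F}(p,\overline{q})$ by a finitely generated relative congruence, i.e.\ to a finitely axiomatised theory over finitely many generators; passing to the subalgebra generated by a subset $\overline{q}$ of the generators corresponds to restricting attention to the formulas built from $\overline{q}$; and finite $\clalg{K}$-equational consequence translates, through $\Delta$ and $\tau$, into $\log{L}$-derivability (here finitarity and the finiteness of $\Delta$, $\tau$ would be used to keep presentations finite).

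With this dictionary in hand, I would reformulate \emph{right deductive uniform interpolation} algebraically. The uniform interpolant $\chi$ of $\phi$ obtained by eliminating $p$ is, up to translation, a presentation of the subalgebra of the finitely presented algebra $\mathbf{F}(p,\overline{q})/\theta_\phi$ generated by the images of $\overline{q}$, where $\theta_\phi$ is the relative congruence generated by $\tau(\phi)$. Clause (2), $\phi\vdash\chi$, says the generators of this subalgebra satisfy the relations named by $\chi$; clause (3), that $\chi$ entails every $p$-free consequence of $\phi$, says that $\chi$ captures \emph{all} the relations holding in the subalgebra, i.e.\ that $\chi$ genuinely \emph{presents} it rather than merely lying below it.

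The forward direction, right deductive uniform interpolation $\Rightarrow$ coherence and deductive interpolation, should be comparatively direct. Deductive interpolation for $\clalg{K}$ falls out because a uniform interpolant is in particular an interpolant for each single consequence (instantiate $\psi$ in clause (3) to the conclusion at hand), and coherence follows because clauses (2)--(3) exhibit the $\overline{q}$-generated subalgebra as finitely presented by $\chi$, modulo the standard reduction of general coherence to the single-generator elimination case (iterate the elimination, and write an arbitrary finitely generated subalgebra of a finitely presented algebra as such an image). The reverse direction is where the real work lies, and I expect it to be the main obstacle: deductive interpolation only yields, for each conclusion $\psi$, \emph{some} $p$-free interpolant depending on $\psi$, and the task is to fuse this family into a \emph{single} $\chi$ working uniformly for all $\psi$. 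This is exactly what coherence buys --- the subalgebra generated by $\overline{q}$ inside $\mathbf{F}(p,\overline{q})/\theta_\phi$ is again finitely presented --- and translating that finite presentation back across $\Delta$ should produce the desired uniform $\chi$. The hard part will be making this last translation precise: checking that the finite set of defining relations of the subalgebra assembles into a single formula whose deductive closure is exactly the set of $p$-free consequences of $\phi$. It is here, and not in plain deductive interpolation, that coherence is indispensable.
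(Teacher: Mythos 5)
The paper does not actually prove this theorem: it is imported wholesale from Kowalski and Metcalfe~\cite[Proposition 2.4]{KowMet19}, consistent with the authors' stated policy of assuming general \AAL\ results and only formalising facts specific to $\wbil$, $\sbil$ and $\biha$. So there is no in-paper argument to compare yours against; the relevant benchmark is the proof in the cited source, and your outline follows essentially that route. The dictionary you set up --- finitely presented members of $\clalg{K}$ as quotients of free algebras by compact (finitely generated) relative congruences, the uniform interpolant $\chi$ as a finite presentation of the subalgebra generated by the retained variables $\overline{q}$, and coherence as precisely the device that fuses the $\psi$-dependent family of ordinary interpolants into a single $\chi$ --- is the correct skeleton, and your identification of where deductive interpolation versus coherence each do their work matches the known argument.

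That said, what you have written is a plan rather than a proof, and the two places you flag as ``the hard part'' are exactly where the content lies and where you stop. First, coherence quantifies over \emph{all} finitely generated subalgebras of finitely presented algebras, not only those generated by a subset of the presenting generators; the reduction you gesture at (adjoin fresh generators $y_i$ with relations $y_i = t_i(\overline{x})$) is standard but needs to be stated and checked to preserve finite presentability in both directions. Second, the paper's Definition of right deductive uniform interpolation demands a single formula $\chi$, whereas the finite presentation handed to you by coherence is a finite set of equations; collapsing it to one formula uses the finiteness of $\tau$ and $\Delta$ and the availability of a conjunction-like behaviour of finite sets of premises in a finitary logic, which you mention but do not discharge. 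Neither point is an obstruction --- both are handled in \cite{KowMet19} --- but as submitted the reverse direction of the biconditional is only described, not carried out.
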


While there are some elements we left undefined in Definition~\ref{def:coherence}
and Theorem~\ref{thm:bridgecoherence}%
\footnote{We refer the interested reader to \cite{vGoMetTsi17,KowMet19}.}%
, we want the reader to take away the following: if the class of algebras corresponding 
to an algebraizable logic is not coherent, then the logic fails right deductive uniform
interpolation.
We therefore obtain the following.

\begin{corollary}
The logic $\sbil$ fails right deductive uniform interpolation.
\end{corollary}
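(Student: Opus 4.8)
The plan is to invoke the second bridge theorem (Theorem~\ref{thm:bridgecoherence}) in its contrapositive form. By Theorem~\ref{thm:algbiha} and Corollary~\ref{cor:biheytingequivalg}, the logic $\sbil$ is algebraizable with equivalent algebraic semantics $\biha$, so the theorem applies and tells us that $\sbil$ has right deductive uniform interpolation \emph{if and only if} $\biha$ is coherent \emph{and} admits deductive interpolation. To refute right deductive uniform interpolation it therefore suffices to refute one of the two conjuncts on the algebraic side. Since we have already established in the previous subsection that $\biha$ has amalgamation, and amalgamation is precisely the algebraic counterpart of deductive interpolation, that conjunct is satisfied and cannot be the source of failure; the entire weight of the proof thus falls on showing that $\biha$ is \emph{not} coherent.

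The core of the argument is therefore a purely algebraic statement: there is a finitely presented bi-Heyting algebra possessing a finitely generated subalgebra that is not finitely presented (Definition~\ref{def:coherence}). The natural place to search for such a witness is inside a finitely generated free bi-Heyting algebra, which is automatically finitely presented (being presented by finitely many generators and no relations); one then isolates a finitely generated subalgebra whose every presentation would require infinitely many relations. Intuitively, the obstruction should stem from exactly the phenomenon exploited elsewhere in this paper: the interaction of $\excl$ with $\to$ forces the iterated pattern $\DN$ to encode unbounded zig-zag depth along $\zz$ (Lemma~\ref{lem:jellyfish}), so that no finite set of defining relations can pin down the behaviour of the relevant subalgebra, mirroring the very reason $\wbil$ failed to be finitely equivalential.

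I expect verifying non-coherence to be the main obstacle: exhibiting the ambient algebra and the candidate subalgebra is the easy part, but proving that the candidate genuinely fails to be finitely presented requires tight control of its congruence lattice (equivalently, of the relations among the chosen generators), and this is the delicate step. A pragmatic alternative, fitting the spirit of this section, is to import non-coherence of $\biha$ as a known algebraic fact and cite it, leaving only the mechanical application of the bridge theorem to be carried out here. Either way, once $\biha$ is known to be non-coherent, Theorem~\ref{thm:bridgecoherence} immediately delivers the failure of right deductive uniform interpolation for $\sbil$, completing the proof.
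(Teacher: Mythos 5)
Your proposal is correct and matches the paper's proof: the paper likewise applies Theorem~\ref{thm:bridgecoherence} in contrapositive form and discharges the algebraic side by citing the known fact that $\biha$ is not coherent (Kowalski and Metcalfe, Corollary 5.8 of their paper), which is exactly the ``pragmatic alternative'' you describe. Your speculative sketch of a direct non-coherence witness is not needed and is not carried out in the paper.
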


\begin{proof}
Given Theorem~\ref{thm:bridgecoherence}, it suffices to show that $\biha$ is not coherent.
The latter was proved by Kowalski and Metcalfe~\cite[Corollary 5.8]{KowMet19}.
So, $\sbil$ fails right deductive uniform interpolation.
\end{proof}

\subsection{Deduction-detachment theorem and equationally definable principal congruences}

Finally, we consider a bridge theorem bringing together the
\emph{deduction-detachment theorem} and the property of having
\emph{equationally definable principal congruences}.

The notion of deduction-detachment theorem we consider
here is a generalised language agnostic version of the traditional notion.
The following definition can be found in Font's textbook~\cite[Definition 3.76]{Fon16}.

\begin{definition}[Deduction-detachment theorem]
  A set of formulas $I(p,q) \subseteq \form$ in at most two variables is a
  \emph{Deduction-Detachment (DD) set} for a logic $\vdash$ when, for all
  $\Gamma \cup \{\phi,\psi\} \subseteq \form$, $$
  \Gamma, \phi \vdash \psi \quad \text{ iff } \quad \Gamma \vdash I(\phi,\psi).
  $$
  A logic \emph{satisfies the Deduction-Detachment Theorem} (DDT) when it has a
  DD set.
\end{definition}

To define the algebraic property corresponding to the DDT for
some algebraizable logics, we make use of a specific type of
congruence on an algebra $\alg{A}$ generated from a pair 
$(a,b)$ of elements of $\alg{A}$:
we define $\Omega^\alg{A}(a,b)$ to be the smallest congruence
on $\alg{A}$ that identifies $a$ and $b$.

\begin{definition}[Equationally definable principal congruences]
  A variety $\clalg{K}$ has \emph{equationally definable principal congruences} (EDPC),
  when there is a finite non-empty set of equations in four variables
  $\{\eta_i(x_0,x_1,y_0,y_1) = \zeta_i(x_0,x_1,y_0,y_1) \mid i = 1,...,m\}$ such that for all $\alg{A} \in \clalg{K}$ and
  $a,b,c,d \in \alg{A}$,
  $$
  (c,d) \in \Omega_\alg{A}(a,b) \iff \eta_{i}(a,b,c,d) = \zeta_i(a,b,c,d) \text{ for all } i.
  $$
\end{definition}

Standard examples of varieties having EDPC are Boolean and Heyting algebras, as
they satisfy the following equivalence:
$(c,d) \in \Omega_\alg{A}(a,b)$ if and only if 
$((a \ato b) \aland (b \ato a)) \aland c = ((a \ato b) \aland (b \ato a)) \aland d$.

We will now give the bridge theorem connecting the DDT and EDPC.
This is a famous result originally proven by Blok and Pigozzi~\cite{BloPig89},
with a textbook account given by Font~\cite[Corollary 3.86]{Fon16}.

\begin{theorem}\label{thm:bridgeedpc}
If a finitary logic $\log{L}$ is finitely algebraizable with equivalent
algebraic semantics $\clalg{K}$, then the following holds.
\begin{center}
\begin{tabular}{c c c}
$\log{L}$ satisfies DDT & iff & $\clalg{K}$ has EDPC.
\end{tabular}
\end{center}
\end{theorem}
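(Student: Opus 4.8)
The plan is to derive this classical Blok--Pigozzi bridge entirely from the translation dictionary furnished by algebraizability together with the Isomorphism Theorem (Theorem~\ref{thrm:theisothrm}), which is exactly the device that converts a property of the logic into one of the class of algebras. Fix finite defining equations $\tau(x)$ and finite equivalence formulas $\Delta(x,y)$ witnessing that $\log{L}$ is finitely algebraizable with equivalent algebraic semantics $\clalg{K}$ (Definition~\ref{def:algebraizable}), and write $\Delta=\{\Delta_1,\dots,\Delta_m\}$. These data supply the mutually inverse translations between $\vdash$ and $\vDash_{\clalg{K}}$ on which the whole argument rests.

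First I would make the isomorphism of Theorem~\ref{thrm:theisothrm} concrete: for any algebra $\alg{A}$ it sends a deductive filter $F$ to the relative congruence $\{(a,b)\mid \Delta^{\alg{A}}(a,b)\subseteq F\}$, and, being a lattice isomorphism, it pairs the finitely generated (compact) objects on the two sides. The consequence I would extract is that the principal relative congruence $\Omega_{\alg{A}}(a,b)$ is the image of the filter generated by the finite set $\Delta^{\alg{A}}(a,b)$, whence
$$(c,d)\in\Omega_{\alg{A}}(a,b)\iff \Delta^{\alg{A}}(c,d)\subseteq \mathrm{Fg}^{\alg{A}}(\Delta^{\alg{A}}(a,b)),$$
where $\mathrm{Fg}^{\alg{A}}$ denotes deductive-filter generation. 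This recasts EDPC as the question of whether the right-hand condition is equationally definable in the four parameters $a,b,c,d$.

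The heart of the proof is the semantic reading of the DDT. By structurality a DD set $I(p,q)$ yields, for every $\alg{A}\in\clalg{K}$, filter $F$ and elements $a,b$, the equivalence $b\in\mathrm{Fg}^{\alg{A}}(F\cup\{a\})$ iff $I^{\alg{A}}(a,b)\subseteq F$. Iterating this over the generators $\Delta_1^{\alg{A}}(a,b),\dots,\Delta_m^{\alg{A}}(a,b)$ rewrites ``$\Delta_k^{\alg{A}}(c,d)\in\mathrm{Fg}^{\alg{A}}(\Delta^{\alg{A}}(a,b))$'' as the demand that a fixed finite nest of $I$-terms lie in the least filter $\mathrm{Fg}^{\alg{A}}(\emptyset)$. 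Since $\alg{A}\in\clalg{K}$, membership of a term $t$ in $\mathrm{Fg}^{\alg{A}}(\emptyset)$ is exactly the satisfaction of the defining equations $\tau(t)$; applying $\tau$ to every term in the nest, for every $k$, therefore produces a finite non-empty family $\{\eta_i(a,b,c,d)=\zeta_i(a,b,c,d)\}$ defining $\Omega_{\alg{A}}(a,b)$ uniformly on $\clalg{K}$, which is EDPC. For the converse I would run the dictionary backwards: transporting the EDPC equations through $\Delta$ (as in conditions (ALG3)--(ALG4) of Definition~\ref{def:algebraizable}) yields a candidate $I(p,q)$, and the same semantic correspondence verifies that it is a DD set.

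I expect the main obstacle to lie in the second and third steps. The bare statement of Theorem~\ref{thrm:theisothrm} only asserts that \emph{some} lattice isomorphism exists, so genuine work is needed to exhibit its explicit Blok--Pigozzi form and to check that it carries the filter generated by $\Delta^{\alg{A}}(a,b)$ onto $\Omega_{\alg{A}}(a,b)$ naturally in $\alg{A}$. Moreover, because $I$, $\tau$ and $\Delta$ are all \emph{sets} of formulas, the iterated use of $I$ must be bookkept so that the resulting family of equations stays finite and in only four variables; it is precisely here that the hypotheses \emph{finitary} and \emph{finitely} algebraizable are indispensable, since they ensure that finitely generated filters suffice and that $\tau$, $\Delta$ and $I$ may all be taken finite.
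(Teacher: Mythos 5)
The paper does not prove this theorem at all: it is imported verbatim from Blok--Pigozzi, with Font [Corollary 3.86] cited as the textbook reference, consistent with the authors' stated policy of assuming general \AAL{} results and formalising only what is specific to $\wbil$, $\sbil$ and $\biha$. So there is no in-paper argument to compare against; what you have written is a reconstruction of the standard literature proof, and its architecture is the right one: make the Isomorphism Theorem explicit as $F\mapsto\{(a,b)\mid \Delta^{\alg{A}}(a,b)\subseteq F\}$, observe that a lattice isomorphism between the algebraic lattices of filters and relative congruences matches compact elements, identify $\Omega_{\alg{A}}(a,b)$ with the image of the filter generated by the finite set $\Delta^{\alg{A}}(a,b)$, and then trade filter membership for equations via $\tau$ and congruence membership for filter membership via $\Delta$. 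Your closing remarks correctly locate where finitarity and finiteness of $\tau$, $\Delta$, $I$ are consumed, and correctly note that the paper's Theorem~\ref{thrm:theisothrm} only asserts existence of an isomorphism, so exhibiting the Blok--Pigozzi form is genuine extra work.

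The one step that is materially understated is the ``semantic reading of the DDT.'' The equivalence $b\in\mathrm{Fg}^{\alg{A}}(F\cup\{a\})$ iff $I^{\alg{A}}(a,b)\subseteq F$, for arbitrary algebras $\alg{A}$ and arbitrary deductive filters $F$, does not follow from structurality alone: structurality and the DD set only give you the theory-level statement on the formula algebra. Passing from theories to deductive filters on arbitrary algebras is the \emph{transfer principle} for the DDT, a separate theorem (Font, Proposition 3.79 ff.) whose proof needs finitarity, a supply of fresh variables, and the description of $\mathrm{Fg}^{\alg{A}}$ via images of finite derivations; moreover the easy unwinding only yields nested terms $I^{\alg{A}}(a,I^{\alg{A}}(a,\dots))$ rather than $I^{\alg{A}}(a,b)$ directly, so the collapse of that nesting has to be argued. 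Relatedly, in the converse direction the DD set you manufacture from the EDPC equations must be verified against theories of $\log{L}$, i.e.\ filters on the formula algebra, which does \emph{not} lie in $\clalg{K}$; one has to route through the Lindenbaum--Tarski quotients or the free algebras of $\clalg{K}$ rather than apply the EDPC equations to the formula algebra itself. Neither point invalidates your plan --- both are exactly the lemmas the cited sources supply --- but as written they are asserted rather than proved, and they are the places where a self-contained proof would have to do its real work.
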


Leveraging the theorem above, we prove that $\sbil$ does not have the DDT
by showing that $\biha$ does not have EDPC.
To do so we make use of the following characterisation of varieties of
bi-Heyting algebras that have the EDPC~\cite[Corollary 3.7]{Tay16}.

\begin{lemma}
  \label{lem:semisimpleedpc}
  Let $\clalg{K}$ be a variety of bi-Heyting algebras, then $\clalg{K}$ has the EDPC if and only
  if there exists an $n \in \N$ such that, for every $\alg{A}\in\clalg{K}$ and $a \in\alg{A}$ we have
  $(\aDN)^{n+1}(a) = (\aDN)^n(a)$.
\end{lemma}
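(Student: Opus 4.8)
The plan is to reduce the equational definability of principal congruences to the behaviour of the deflationary operator $\aDN$ on the \emph{open filters} of bi-Heyting algebras, and only then read off the stabilisation condition. First I would set up the congruence theory of $\biha$ in the familiar filter-theoretic form: just as congruences of a Heyting algebra correspond to its lattice filters, congruences of a bi-Heyting algebra $\alg A$ correspond bijectively to its \emph{open filters}, i.e.\ the lattice filters $F$ with $a\in F\Rightarrow\aDN a\in F$ (the extra closure condition is exactly what $\aexcl$ forces, mirroring the role of $\square$ for modal algebras). Under this correspondence a congruence $\Theta$ is sent to $\{a:(a,\atop)\in\Theta\}$ and a filter $F$ to $\{(c,d):c\aleftrightarrow d\in F\}$, so that $\Omega_{\alg A}(a,b)$ matches the smallest open filter containing $a\aleftrightarrow b$. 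The key structural fact I would record is that $\aDN$ is monotone and \emph{deflationary}, $\aDN a\leq a$ (this follows from the local completeness of $\wbil$ and Lemma~\ref{lem:jellyfish} at $n=1$, since $w\zz w$). Consequently the chain $a\geq\aDN a\geq(\aDN)^2a\geq\cdots$ is decreasing, and the open filter generated by $e$ is simply $\bigcup_{k}{\uparrow}(\aDN)^k e$. Combining these gives the working description
\[
  (c,d)\in\Omega_{\alg A}(a,b)\quad\Longleftrightarrow\quad \exists k\in\N.\ (\aDN)^k(a\aleftrightarrow b)\leq c\aleftrightarrow d .
\]

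For the direction from stabilisation to EDPC I would argue that the hypothesis $(\aDN)^{n+1}a=(\aDN)^n a$ collapses the existential quantifier to the fixed index $k=n$: the decreasing chain is constant from $n$ on, so the open filter generated by $a\aleftrightarrow b$ is the \emph{principal} filter ${\uparrow}(\aDN)^n(a\aleftrightarrow b)$. Using that $t\leq c\aleftrightarrow d$ is equivalent to $t\aland c=t\aland d$, the displayed description becomes the single four-variable equation
\[
  (\aDN)^n(a\aleftrightarrow b)\aland c=(\aDN)^n(a\aleftrightarrow b)\aland d ,
\]
which is exactly an EDPC-scheme (with $m=1$) for $\clalg K$.

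For the converse I would extract a uniform bound from the defining equations. Suppose $\clalg K$ has EDPC witnessed by terms $\eta_i=\zeta_i$ in four variables. Passing to the free algebra $\alg F=\alg F_{\clalg K}(x)$ on one generator, an identity holds in $\clalg K$ iff it holds in $\alg F$, so it suffices to force stabilisation of the chain $(\aDN)^k x$ in $\alg F$. By the working description, $\Omega_{\alg F}(x,\atop)$ is the open filter $\bigcup_k{\uparrow}(\aDN)^k x$, whereas the EDPC terms $\eta_i(x,\atop,c,d)$, $\zeta_i(x,\atop,c,d)$ have some finite $\aDN$-nesting depth $N$; the crux is to show that a relation cut out by such bounded-depth terms can only ``see'' the generator through $(\aDN)^{\leq N}x$, hence must coincide with ${\uparrow}(\aDN)^{N}x$, forcing $(\aDN)^{N+1}x=(\aDN)^{N}x$. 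I expect this bounded-depth step to be the main obstacle, since making ``the equations detect only finitely many iterations of $\aDN$'' precise requires either a careful syntactic analysis of the bi-Heyting terms or a semantic argument on the dual space of $\alg F$, producing, in case of non-stabilisation, a pair in $\Omega_{\alg F}(x,\atop)$ that the fixed equations fail to capture.

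Finally, I would note a slicker but less self-contained route that stays inside the paper's toolkit: every subvariety of $\biha$ is the equivalent algebraic semantics of an axiomatic extension of $\sbil$, algebraizable with the same data, so Theorem~\ref{thm:bridgeedpc} gives that $\clalg K$ has EDPC iff that logic satisfies the DDT, i.e.\ has a finite deduction--detachment set $I(p,q)$. Comparing this with the modified deduction theorem of the $\sbil$-style logics, whose shape is $\Gamma,\phi\vdash\psi$ iff $\exists m.\ \Gamma\vdash(\DN)^m\phi\to\psi$ (Lemma~\ref{lem:derivations}.5), the existence of a \emph{finite} $I$ is precisely the collapse of the unbounded ``$\exists m$'' to a fixed $n$, which algebraically is the stabilisation $(\aDN)^{n+1}=(\aDN)^n$. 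This reproves both directions at once and again isolates the only nontrivial point as the passage from an unbounded to a uniform $n$.
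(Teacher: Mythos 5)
First, a point of comparison: the paper does not prove this lemma at all --- it is imported verbatim as \cite[Corollary 3.7]{Tay16}, so there is no in-paper argument to measure you against. Judged on its own, your sufficiency direction (stabilisation $\Rightarrow$ EDPC) is correct and essentially complete. The congruence/filter correspondence you set up is the right one: congruences of a bi-Heyting algebra correspond to lattice filters closed under $\aDN$ (the needed compatibility with $\aexcl$ is exactly the algebraic form of the $(\excl)$ case of Theorem~\ref{thrm:wbilequiv}, $\aDN(c\aleftrightarrow d)\leq (c\aexcl e)\aleftrightarrow(d\aexcl e)$), $\aDN$ is indeed monotone and deflationary, so the open filter generated by $e$ is $\bigcup_k\uparrow(\aDN)^k e$, and under stabilisation this collapses to the principal filter $\uparrow(\aDN)^n e$, yielding the single defining equation $(\aDN)^n(x_0\aleftrightarrow x_1)\aland y_0=(\aDN)^n(x_0\aleftrightarrow x_1)\aland y_1$.

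The genuine gap is the necessity direction, and you have flagged it yourself: the claim that equations of bounded ``$\aDN$-nesting depth can only see $(\aDN)^{\leq N}x$'' is not an argument, and $\aDN$-depth is not obviously the right invariant for arbitrary four-variable bi-Heyting terms (which may nest $\ato$ and $\aexcl$ arbitrarily). Your closing ``slicker route'' has the same hole in the same place: you assert that the existence of \emph{some} finite DD set forces the collapse of the $\exists m$ in Lemma~\ref{lem:derivations}.5, but a finite DD set need not have the shape $(\DN)^n p\to q$ a priori. Two ways to close it. Logically: from $I(p,q),p\vdash q$ and Lemma~\ref{lem:derivations}.5 extract a \emph{single} $m$ with $I(p,q)\vdash(\DN)^m p\to q$; from $(\DN)^{m+1}p\to q,\,p\vdash(\DN)^{m+1}p$ (rule $\GHrule{sDN}$) and \GHrule{MP} get $(\DN)^{m+1}p\to q\vdash I(p,q)$ via the DDT; chaining and substituting $q:=(\DN)^{m+1}p$ gives $\vdash(\DN)^m p\to(\DN)^{m+1}p$, hence stabilisation at $m$ by algebraizability. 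Algebraically: if no uniform $n$ works, a non-principal ultraproduct of witnesses yields $\alg{A}\in\clalg{K}$ and $a$ with $(\aDN)^k a$ strictly decreasing; in a further ultrapower the element $e=[(\aDN)^k a]_k$ lies below every $(\aDN)^j a$ yet outside $\bigcup_j\uparrow(\aDN)^j a$, so $(e,\atop)\notin\Omega_{\alg{A}}(a,\atop)$ while \L{}o\'{s}'s theorem forces the EDPC equations for $(e,\atop)$ to hold, a contradiction. Either patch is a real piece of work that your proposal currently leaves open.
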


Given this we can now see that $\sbil$ does not satisfy DDT.

\begin{theorem}
The logic $\sbil$ does not satisfy DDT.
\end{theorem}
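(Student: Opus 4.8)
The plan is to invoke the bridge theorem (Theorem~\ref{thm:bridgeedpc}) to reduce the failure of the DDT for $\sbil$ to the failure of EDPC for $\biha$, and then to refute EDPC using the semantic characterisation of Lemma~\ref{lem:semisimpleedpc}. First I would record that the hypotheses of Theorem~\ref{thm:bridgeedpc} are met: $\sbil$ is finitary (as recalled in Section~\ref{sec:biint}) and, by Theorem~\ref{thm:algbiha} together with Corollary~\ref{cor:biheytingequivalg}, it is finitely algebraizable with equivalent algebraic semantics $\biha$. Hence $\sbil$ satisfies the DDT if and only if $\biha$ has EDPC, and it suffices to prove that $\biha$ lacks EDPC.

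By Lemma~\ref{lem:semisimpleedpc}, $\biha$ has EDPC iff there is some $n\in\N$ with $(\aDN)^{n+1}(a)=(\aDN)^n(a)$ for every bi-Heyting algebra and every element $a$; that is, iff the identity $(\aDN)^{n+1}(x)=(\aDN)^n(x)$ holds throughout $\biha$ for some $n$. Reading this identity (with $x$ a free generator) as the equation $(\DN)^{n+1}p=(\DN)^n p$ and applying Lemma~\ref{lem:tommolem}, it holds in $\biha$ exactly when $(\DN)^{n+1}p \dashv\vdash_w (\DN)^n p$. So I would reduce the whole statement to showing that for every $n$ the equivalence $(\DN)^{n+1}p \dashv\vdash_w (\DN)^n p$ fails. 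Since $\zz^n\subseteq\zz^{n+1}$ (using reflexivity of $\zz$), the direction \wderiv{(\DN)^{n+1}p}{(\DN)^n p} always holds, so it is enough to refute the converse \wderiv{(\DN)^n p}{(\DN)^{n+1}p} for each $n$.

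To refute it I would pass to the Kripke semantics through completeness of $\wbil$ for the local consequence relation (Theorem~\ref{thm:soundcomplkripke}): it suffices to exhibit, for each $n$, a model $\mo{M}$ and world such that $\mo{M},w\Vdash(\DN)^n p$ but $\mo{M},w\not\Vdash(\DN)^{n+1}p$. The natural witness is a zig-zag ``fence'' frame $x_0\leq x_1\geq x_2\leq\cdots\geq x_{2n+2}$ (the Christmas-lights shape already used in this paper), so that $x_0\,\zz^{k}\,x_{2k}$ and the minimal zig-zag distance from $x_0$ to $x_{2n+2}$ is exactly $n+1$. Setting $p$ true everywhere except at $x_{2n+2}$ (which yields a persistent valuation, as $x_{2n+2}$ is $\leq$-minimal), Lemma~\ref{lem:jellyfish} gives $x_0\Vdash(\DN)^n p$ because every $v$ with $x_0\,\zz^n\,v$ satisfies $p$, while $x_0\not\Vdash(\DN)^{n+1}p$ because $x_0\,\zz^{n+1}\,x_{2n+2}$ and $p$ fails at $x_{2n+2}$. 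Thus $(\DN)^n p \nolocmodels (\DN)^{n+1}p$, and hence \nowderiv{(\DN)^n p}{(\DN)^{n+1}p} by completeness.

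The main obstacle is not a single deep step but the reachability bookkeeping on the fence: I must confirm that $n$ zig-zags from $x_0$ reach only the points of index at most $2n+1$, so that $x_{2n+2}$ is precisely the point spoiling $(\DN)^{n+1}p$ while leaving $(\DN)^n p$ intact, and that the chosen valuation is genuinely persistent. Once this is verified, chaining the three reductions shows that no $n$ validates the identity, so $\biha$ lacks EDPC and therefore $\sbil$ fails the DDT.
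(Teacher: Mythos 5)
Your proposal is correct, and its skeleton coincides with the paper's: both reduce the failure of the DDT, via Theorem~\ref{thm:bridgeedpc}, to the failure of EDPC for $\biha$, and both refute EDPC through the characterisation of Lemma~\ref{lem:semisimpleedpc}, i.e.~by showing that no $n$ validates $(\aDN)^{n+1}(x)=(\aDN)^{n}(x)$ throughout $\biha$. Where you diverge is in how this last identity is refuted. The paper picks a single witnessing algebra, the Lindenbaum--Tarski algebra $\LT{s}{\emptyset}$, and quotes the underivability $\nosderiv{}{(\DN)^{n}p\to(\DN)^{n+1}p}$ to conclude $(\aDN)^{n}\eqprv{p}\neq(\aDN)^{n+1}\eqprv{p}$; that underivability is asserted rather than proved there. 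You instead translate the identity into $(\DN)^{n+1}p\dashv\vdash_{w}(\DN)^{n}p$ via Lemma~\ref{lem:tommolem}, and refute the nontrivial direction with an explicit fence countermodel, using completeness (Theorem~\ref{thm:soundcomplkripke}) and the zig-zag reading of $\DN$ (Lemma~\ref{lem:jellyfish}). Your reachability bookkeeping is sound: in the fence $x_0\leq x_1\geq\cdots\geq x_{2n+2}$ the relation $\zz^{n}$ from $x_0$ only reaches points of index at most $2n$, while $x_0\zz^{n+1}x_{2n+2}$, and the valuation falsifying $p$ only at the $\leq$-minimal point $x_{2n+2}$ is persistent. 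The trade-off is that your route is more self-contained (it supplies the countermodel the paper leaves implicit, in the same Christmas-lights style used elsewhere in the paper) at the cost of invoking the Kripke machinery, whereas the paper's argument stays purely algebraic/proof-theoretic modulo the unproved underivability claim.
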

\begin{proof}
  By Lemma~\ref{lem:semisimpleedpc} and Theorem~\ref{thm:bridgeedpc} it
  suffices to construct a bi-Heyting algebra $\alg{A}$ with an $a\in\alg{A}$ which satisfy
  $(\DN)^{n+1}(a) \neq (\DN)^n(a)$ for all $n\in\N$.

  Consider the Lindenbaum-Tarski algebra $\LT{s}{\emptyset}$.
  Given that $\nosderiv{}{(\DN)^np\to(\DN)^{n+1}p}$ for every $n$,
  we can conclude that $\eqprv{(\DN)^np}\neq\eqprv{(\DN)^{n+1}p}$,
  i.e.~$(\aDN)^{n}\eqprv{p}\neq(\aDN)^{n+1}\eqprv{p}$.
  This shows that $\biha$ does not have EDPC,
  hence $\sbil$ does not satisfy DDT.
\end{proof}

It was already known that $\sbil$ only satisfies a modified version of
the traditional deduction-detachment theorem, displayed in Lemma~\ref{lem:derivations}.
Still, the result above is new as it generalises this failure of the 
traditional deduction-detachment theorem, which is nothing but
an instance of the failure of the DDT for $\sbil$.


\section{Stepping aside \AAL: logics preserving (degrees of) truth}\label{sec:preserv}

The \AAL~analysis is clear: $\sbil$ is tightly connected to $\biha$, as it is algebraizable,
but $\wbil$ not at all, as it has no $\tau$-algebraic semantics over it.
One may be finding this state of affairs too harsh, as $\wbil$ and $\biha$ can surely
be connected in some way. 

In this section, we briefly provide another framework which gives an
algebraic semantic for both $\sbil$ and $\wbil$ over $\biha$.
More precisely, in this context 
$\sbil$ is the logic \emph{preserving truth} over $\biha$ 
while $\wbil$ is the logic \emph{preserving degrees of truth} over $\biha$
\cite{Now90}.

Let us start with $\sbil$.

\begin{definition}[Logic preserving truth]\label{def:lpt}
The logic preserving truth $\pt$ on a class $\clalg{K}$ of algebras with a top element $\atop$%
\footnote{We consider a restricted version of this notion which works for our case:
here we use $\atop$ in our notion, but one can generalise it using arbitrary designated point.}
is defined as
\begin{center}
\begin{tabular}{c c c}
$\Gamma\pt\phi$ & $:=$ & 
$\forall \alg{A}\in \clalg{K}.\,\forall \val.\,\;\text{if }(\forall \gamma\in\Gamma.\;\atop= \int{\val}(\gamma))\text{ then }\atop= \int{\val}(\phi).$
\end{tabular}
\end{center}
\end{definition}

By looking at the definition above, one can directly see that the statement 
that $\sbil$ is the logic preserving truth over $\biha$ is nothing but the combination
of the already established Proposition~\ref{prop:sound} and Proposition~\ref{prop:compl}.

\begin{corollary}
The logic $\sbil$ is the logic preserving truth over $\biha$.
\end{corollary}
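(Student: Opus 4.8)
The plan is to show that the two directions of the biconditional defining $\pt$ over $\biha$ coincide exactly with Proposition~\ref{prop:sound} (soundness) and Proposition~\ref{prop:compl} (completeness). First I would unfold Definition~\ref{def:lpt} in the particular case $\clalg{K}=\biha$. The condition $\Gamma\pt\phi$ reads: for every $\alg{A}\in\biha$ and every valuation $\val$, if $\atop=\int{\val}(\gamma)$ for all $\gamma\in\Gamma$, then $\atop=\int{\val}(\phi)$. Rewriting the equation $\atop=\int{\val}(\gamma)$ as $\int{\val}(\gamma)=\atop$, this is visibly the same as the semantic consequence $\{\gamma=\top\mid\gamma\in\Gamma\}\vDash_{\biha}\phi=\top$ spelled out via Definition~\ref{def:eqcsq}. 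So the whole statement reduces to establishing
\begin{center}
\begin{tabular}{c c c}
$\sderiv{\Gamma}{\phi}$ & if and only if & $\{\gamma=\top\mid\gamma\in\Gamma\}\vDash_{\biha}\phi=\top$.
\end{tabular}
\end{center}

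Next I would dispatch each direction by citing the relevant already-proved result. The left-to-right implication, $\sderiv{\Gamma}{\phi}$ implies $\{\gamma=\top\mid\gamma\in\Gamma\}\vDash_{\biha}\phi=\top$, is precisely the statement of Proposition~\ref{prop:sound}. The right-to-left implication, $\{\gamma=\top\mid\gamma\in\Gamma\}\vDash_{\biha}\phi=\top$ implies $\sderiv{\Gamma}{\phi}$, is precisely Proposition~\ref{prop:compl}. Combining the two yields the desired biconditional, and hence $\sbil$ is the logic preserving truth over $\biha$.

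The only genuinely non-routine point, and the one I would state carefully, is the identification of the semantic condition in Definition~\ref{def:lpt} with the equational consequence $\vDash_{\biha}$ of Definition~\ref{def:eqcsq}. This is a matter of matching quantifier structure: both definitions universally quantify over algebras in the class and over valuations, both take a premise that a set of equations holds under $\int{\val}$, and both conclude that a target equation holds under $\int{\val}$. The premise equations $\{\gamma=\top\mid\gamma\in\Gamma\}$ correspond to the hypotheses $\atop=\int{\val}(\gamma)$, and the conclusion $\phi=\top$ corresponds to $\atop=\int{\val}(\phi)$, using that $\int{\val}(\top)=\atop$ holds in every bi-Heyting algebra by definition of the interpretation of $\top$. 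I do not expect any real obstacle here; the whole corollary is a transcription of the soundness and completeness results already in hand, simply repackaged through the logic-preserving-truth definition, exactly as the text preceding the statement already observes.
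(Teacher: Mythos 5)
Your proposal is correct and matches the paper exactly: the paper treats this corollary as an immediate consequence of Proposition~\ref{prop:sound} and Proposition~\ref{prop:compl}, noting that Definition~\ref{def:lpt} instantiated at $\biha$ is literally the equational consequence condition $\{\gamma=\top\mid\gamma\in\Gamma\}\vDash_{\biha}\phi=\top$. Your careful matching of the quantifier structure is the same (brief) observation the paper makes in the sentence preceding the corollary.
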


Now, we focus $\wbil$. We can retrieve an algebraic semantics over $\biha$
for this logic via the notion of logic preserving degrees of truth.

\begin{definition}[Logic preserving degrees of truth \coqdoc{Alg.algebraic_semantic.html\#alg_tdconseq}]\label{def:ptd}
The logic preserving degrees of truth $\vdash_{pdt}$ on a class $\clalg{K}$ of algebras is defined as follows,
where $\Gamma'\subseteq_f\Gamma$ means $\Gamma'\subseteq\Gamma$ and $\Gamma'$ is finite.%
\footnote{Two remarks can be made about our definition.
First, as in Definition~\ref{def:lpt} we use a restricted definition of this notion which
assumes the definability of inequality in the algebras.
Second, the use of a finite $\Gamma'$ instead of $\Gamma$ may look
unnatural, as it builds in the compactness of the logic.
We decided to use this definition for two reasons.
First, we took inspiration from Remark 2.4 of \cite{Mor24} which
claims that one can focus their attention to the finitary case.
Second, the use of an infinite $\Gamma$ in this definition
would not allow our proof of Theorem~\ref{thm:pdt} below
to go through, as we shall explain later.}
\begin{center}
\begin{tabular}{c c c}
$\Gamma\vdash_{pdt}\phi$ & $:=$ & 
$\exists\Gamma'\subseteq_f\Gamma.\,\forall \alg{A}\in\clalg{K}.\,\forall \val.\,\forall a.\,\;\text{if }(\forall \gamma\in\Gamma'.\;a\leq \int{\val}(\gamma))\text{ then }a\leq \int{\val}(\phi)$
\end{tabular}
\end{center}
\end{definition}

To show that $\wbil$ is the logic preserving degrees of truth of $\biha$, 
we use a Lindenbaum-Tarski algebra $\LT{w}{\emptyset}$ for $\wbil$ built
as in Definition~\ref{def:LTalg}~(\coqdoc{Alg.wBIH_alg_completeness.html\#LindAlg}). 
Here, it is crucial that we use the empty set of formulas as context as
this restriction allows us to prove that $\LT{w}{\emptyset}\in\biha$
when defining $\aexcl$ in $\LT{w}{\emptyset}$ as the operator working on
representative of equivalence classes in $\eqcla{w}{\emptyset}$.
As already noted, if we allowed non-empty contexts, then defining the $\aexcl$ in this
way would fail to give us a bi-Heyting algebra.

As in $\LT{s}{\Gamma}$ we get that formulas in $\LT{w}{\emptyset}$ are
interpreted as their equivalence class.

\begin{lemma}[\coqdoc{Alg.wBIH_alg_completeness.html\#LindAlgrepres}]\label{lem:walgtrulem}
Let $\val_{\scriptscriptstyle\LT{}{}}:Prop\to\eqcla{w}{\emptyset}$ such that $\val_{\scriptscriptstyle\LT{}{}}(p)=\eqprv{p}$. We have that $\int{\val}_{\scriptscriptstyle\LT{}{}}(\phi)=\eqprv{\phi}$.
\end{lemma}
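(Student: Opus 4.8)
The plan is to follow the proof of Lemma~\ref{lem:algtrulem} verbatim, now carried out in $\LT{w}{\emptyset}$: I would argue by induction on the structure of $\phi$, unfolding the recursive definition of the interpretation and closing each case with the definitional equality for the corresponding operator of $\LT{w}{\emptyset}$. Since $\LT{w}{\emptyset}$ is built exactly as in Definition~\ref{def:LTalg} but for $\wbil$ over the empty context, each algebraic operator acts on representatives of equivalence classes in the same manner, so the induction proceeds against these definitions without friction.

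First I would settle the base case: for a propositional variable $p$ we have $\int{\val}_{\scriptscriptstyle\LT{}{}}(p) = \val_{\scriptscriptstyle\LT{}{}}(p) = \eqprv{p}$ directly by the definition of the interpretation and of $\val_{\scriptscriptstyle\LT{}{}}$. For the inductive step I would treat every connective uniformly: unfolding the interpretation turns $\int{\val}_{\scriptscriptstyle\LT{}{}}(\lambda\phi_1\dots\phi_n)$ into $\alambda$ applied to $\int{\val}_{\scriptscriptstyle\LT{}{}}(\phi_1),\dots,\int{\val}_{\scriptscriptstyle\LT{}{}}(\phi_n)$; the induction hypothesis rewrites each $\int{\val}_{\scriptscriptstyle\LT{}{}}(\phi_i)$ as $\eqprv{\phi_i}$; and the definition of the operator in $\LT{w}{\emptyset}$ collapses this to $\eqprv{\lambda\phi_1\dots\phi_n}$. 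The exclusion case is representative and reads $\int{\val}_{\scriptscriptstyle\LT{}{}}(\psi\excl\chi) = \int{\val}_{\scriptscriptstyle\LT{}{}}(\psi)\aexcl\int{\val}_{\scriptscriptstyle\LT{}{}}(\chi) = \eqprv{\psi}\aexcl\eqprv{\chi} = \eqprv{\psi\excl\chi}$.

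The one point that deserves attention — and the only place where this lemma differs in spirit from its $\sbil$ counterpart — is that the whole argument presupposes that the operators of $\LT{w}{\emptyset}$ descend to equivalence classes, i.e.~are well-defined on representatives. For the intuitionistic connectives this holds in $\wbil$ just as in $\sbil$, but for $\aexcl$ it holds \emph{only} because the context is empty; with a non-empty context the choice of representative would matter and $\aexcl$ would fail to be well-defined, as already observed above. Thus the lemma rests on the prior fact that $\LT{w}{\emptyset}$ genuinely is a bi-Heyting algebra with well-defined operators, after which the induction itself is entirely routine and I would not expect any genuine obstacle beyond bookkeeping.
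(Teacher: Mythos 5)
Your proposal is correct and follows essentially the same route as the paper: a structural induction closed by the definitional equalities of the operators of $\LT{w}{\emptyset}$, with the key observation that well-definedness of $\aexcl$ on equivalence classes hinges on the monotonicity/anti-monotonicity laws for $\excl$ holding in the \emph{empty} context, which is exactly the point the paper's proof singles out. Nothing is missing.
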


\begin{proof}
The proof is similar to the one for Lemma~\ref{lem:algtrulem}.
The only interesting point to note here is that while the latter
proof implicitly relied on monotonicity laws for $\excl$ \emph{in presence of a context $\Gamma$},
this proof relies on these laws in the \emph{empty} context.
The former laws hold only in $\sbil$, but the latter hold in both.
\end{proof}

Relying on the above, we straightforwardly prove $\wbil$ to be
the logic preserving degrees of truth of $\biha$.

\begin{theorem}\label{thm:pdt}
The logic $\wbil$ is the logic preserving truth over $\biha$.
\end{theorem}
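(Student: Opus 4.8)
The statement asserts, unfolding Definition~\ref{def:ptd}, that $\wderiv{\Gamma}{\phi}$ holds if and only if $\Gamma\pdt\phi$ over $\biha$ (so that $\wbil$ is the logic preserving \emph{degrees} of truth over $\biha$). The plan is to prove the two directions separately, leaning on finitariness of $\wbil$, its deduction theorem (Lemma~\ref{lem:derivations}.4), soundness of $\sbil$ (Proposition~\ref{prop:sound}), and the Lindenbaum--Tarski machinery of Lemma~\ref{lem:walgtrulem}.

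For the left-to-right direction I would first use finitariness of $\wbil$ to extract a finite $\Gamma'\subseteq_f\Gamma$ with $\wderiv{\Gamma'}{\phi}$; this $\Gamma'$ serves as the witness demanded by Definition~\ref{def:ptd}. Writing $\bigwedge\Gamma'$ for the conjunction of its members, the deduction theorem converts $\wderiv{\Gamma'}{\phi}$ into $\wderiv{}{(\bigwedge\Gamma')\to\phi}$. Since $\wbil$ and $\sbil$ agree on their theorems, this is also a theorem of $\sbil$, so soundness of $\sbil$ gives $\int{\val}((\bigwedge\Gamma')\to\phi)=\atop$ in every $\alg{A}\in\biha$ under every $\val$, i.e.~$\int{\val}(\bigwedge\Gamma')\leq\int{\val}(\phi)$ by $(ResI)$. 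Then for any $a$ with $a\leq\int{\val}(\gamma)$ for all $\gamma\in\Gamma'$ we have $a\leq\int{\val}(\bigwedge\Gamma')\leq\int{\val}(\phi)$, as $a$ is a lower bound of the finitely many $\int{\val}(\gamma)$ and hence of their meet. This is exactly the required condition.

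For the right-to-left direction I would instantiate the $\pdt$ hypothesis at the Lindenbaum--Tarski algebra $\LT{w}{\emptyset}$, which is a bi-Heyting algebra, equipped with the canonical valuation $\val_{\scriptscriptstyle\LT{}{}}(p)=\eqprv{p}$. Letting $\Gamma'$ be the finite witness supplied by $\Gamma\pdt\phi$, I set $a:=\eqprv{\bigwedge\Gamma'}$, a finite meet of the classes $\eqprv{\gamma}$ in $\LT{w}{\emptyset}$. By construction $a\leq\eqprv{\gamma}$ for each $\gamma\in\Gamma'$, and by Lemma~\ref{lem:walgtrulem} each formula is interpreted as its own class, so $a\leq\int{\val}_{\scriptscriptstyle\LT{}{}}(\gamma)$ for all $\gamma\in\Gamma'$. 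The $\pdt$ hypothesis then yields $a\leq\int{\val}_{\scriptscriptstyle\LT{}{}}(\phi)=\eqprv{\phi}$, i.e.~$\eqprv{\bigwedge\Gamma'}\leq\eqprv{\phi}$. Unwinding the order of $\LT{w}{\emptyset}$, this inequality is equivalent to $\wderiv{}{(\bigwedge\Gamma')\to\phi}$; the deduction theorem then returns $\wderiv{\Gamma'}{\phi}$, and monotonicity gives $\wderiv{\Gamma}{\phi}$.

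The step I expect to be the crux — and the reason the footnote to Definition~\ref{def:ptd} bakes a finite $\Gamma'$ into the very notion of $\pdt$ — is the formation of $a=\eqprv{\bigwedge\Gamma'}$ in the backward direction: this is a \emph{finite} meet in $\LT{w}{\emptyset}$, and no such element would be available for an infinite $\Gamma$ since the algebra need not be complete. The only other point requiring care is keeping the two directions' algebraic inputs straight: the forward direction must appeal to soundness of $\sbil$ (legitimate precisely because $\wbil$ and $\sbil$ share their theorems), while the backward direction relies on $\LT{w}{\emptyset}$ being the Lindenbaum--Tarski algebra built over the \emph{empty} context, where defining $\aexcl$ on representatives genuinely yields a bi-Heyting algebra.
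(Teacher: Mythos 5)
Your proposal is correct and follows essentially the same route as the paper: the right-to-left direction instantiates the $\pdt$ hypothesis at $\LT{w}{\emptyset}$ with $a=\eqprv{\bigwedge\Gamma'}$ exactly as the paper does, and your left-to-right direction is just a careful spelling-out of what the paper dismisses as ``a straightforward soundness argument'' (correctly routed through the agreement of $\wbil$ and $\sbil$ on theorems, since algebraic soundness is only stated for $\sbil$).
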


\begin{proof}
Assume $\Gamma\pdt\phi$.
By definition, there is a finite $\Gamma'\subseteq\Gamma$ such that 
$$\forall \alg{A}\in \clalg{K}.\,\forall \val.\,\forall a.\,\;\text{if }(\forall \gamma\in\Gamma'.\;a\leq \int{\val}(\gamma))\text{ then }a\leq \int{\val}(\phi).$$
Given that $\Gamma'$ is finite, we can create the conjunction of all its elements $\bigwedge\Gamma'$.
Now, if in the statement above we set $\LT{w}{\emptyset}$ for $\alg{A}$, and $\val_{\scriptscriptstyle\LT{}{}}$ from Lemma~\ref{lem:walgtrulem} for $\val$, 
as well as $\eqprv{\bigwedge\Gamma'}$ for $a$, we get the following.%
\footnote{The finiteness of $\Gamma'$ allows us to form the conjunction of all its elements,
which is a formula and therefore has a corresponding equivalence class in $\LT{w}{\emptyset}$.
If instead we used an infinite $\Gamma$, we would not be able to perform this move, as we are not sure whether
the infinite conjunction $\bigwedge\Gamma$ has a corresponding element (with adequate properties) 
in $\LT{w}{\emptyset}$.
A way to reconcile a version of Definition~\ref{def:ptd} where $\Gamma$ is not reduced to a finite subset 
with a proof of Theorem~\ref{thm:pdt} could consist in considering the \emph{canonical extension}
of $\LT{w}{\emptyset}$, which provides adequate infinite meets~\cite{JonTar52,GerHar01,GerHarVen06,GerVos09,Ger14}.}
$$\text{if }(\forall \gamma\in\Gamma'.\;\eqprv{\bigwedge\Gamma'}\leq \int{\val}_{\scriptscriptstyle\LT{}{}}(\gamma))\text{ then }\eqprv{\bigwedge\Gamma'}\leq \int{\val}_{\scriptscriptstyle\LT{}{}}(\phi)$$
By properties of meet, we can easily show that 
$\forall \gamma\in\Gamma'.\;\eqprv{\bigwedge\Gamma'}\leq \eqprv{\gamma}$,
which gives us the antecedent of the implication above as $\int{\val}_{\scriptscriptstyle\LT{}{}}(\gamma)=\eqprv{\gamma}$
via Lemma~\ref{lem:walgtrulem}.
Therefore, we get $\eqprv{\bigwedge\Gamma'}\leq \int{\val}_{\scriptscriptstyle\LT{}{}}(\phi)$ hence $\eqprv{\bigwedge\Gamma'}\leq \eqprv{\phi}$
thanks to another application of Lemma~\ref{lem:walgtrulem}.
We easily obtain $\eqprv{\top}\leq \eqprv{(\bigwedge\Gamma')\to\phi}$ via properties of bi-Heyting algebras
and the definition of $\LT{w}{\emptyset}$.
Consequently, we have that $\eqprv{\top}$ and $\eqprv{(\bigwedge\Gamma')\to\phi}$ are the same equivalence
classes, which implies that $\wderiv{}{\top\to((\bigwedge\Gamma')\to\phi)}$ hence $\wderiv{}{(\bigwedge\Gamma')\to\phi}$.
Using the deduction-detachment theorem from Lemma~\ref{lem:derivations},
we get that $\wderiv{\bigwedge\Gamma'}{\phi}$ hence $\wderiv{\Gamma'}{\phi}$.
A last application of monotonicity (Definition~\ref{def:logic}) gives us our goal $\wderiv{\Gamma}{\phi}$.

The other direction, showing $\Gamma\pdt\phi$ from $\wderiv{\Gamma}{\phi}$ is a straightforward soundness argument.
\end{proof}

So, in this light $\wbil$ can be seen as having a semantics on the class of bi-Heyting algebras.
The downside, however, is that this type of semantics does not come equipped with the strength
of the \AAL~framework exhibited in Section~\ref{sec:bridge}.


\section{Conclusion}\label{sec:concl}

In this paper we laid formalised, hence solid, foundations to the algebraic treatment of bi-intuitionistic logics,
a field which had not been thoroughly revisited since the discoveries of mistakes in Rauszer's works.

More precisely, we formalised in the interactive theorem prover Rocq results situating the logics $\sbil$
and $\wbil$ in the Leibniz hierarchy, by showing that
the former is implicative and finitely algebraizable over the class $\biha$ of bi-Heyting algebras,
while the latter is not algebraizable, has no algebraic semantic over $\biha$, but is non-finitely equivalential.

We exploited the algebraizability of $\sbil$ to showcase the strength of the abstract algebraic logic framework:
we could show that $\sbil$ has Craig interpolation but lacks both right deductive uniform interpolation and
a deduction-detachment theorem, by using bridge theorems and known facts about $\biha$,
i.e.~that it has amalgamation but fails coherence and does not have equationally definable principal congruences.

Finally, we made a step aside the abstract algebraic logic framework to show that one
could still connect the logic $\wbil$ to $\biha$ by the former as the logic preserving
degrees of truth of the latter. In this context, $\sbil$ becomes the logic preserving truth of $\biha$.

\section*{Acknowledgements}

The authors would like to acknowledge (in no specific order) Miguel Martins, Tommaso Moraschini,
James G. Raftery, Nick Bezhanishvili, Christopher J. Taylor, Tomasz Kowalski,
Adam P\v renosil and Tom\' a\v s L\'avi\v cka for the many enlightening conversations
we had with them throughout this project.

\bibliographystyle{plain}
\bibliography{algbiint}

\begin{thebibliography}{10}

\bibitem{Bad16}
G.~Badia.
\newblock Bi-simulating in bi-intuitionistic logic.
\newblock {\em Studia Logica}, 104:1037--1050, 2016.

\bibitem{BloPig89}
W.~J. Blok and D.~Pigozzi.
\newblock {\em Algebraizable Logics}, volume~77 of {\em Memoirs of the American
  Mathematical Society}.
\newblock Advanced Reasoning Forum, London, 1989.

\bibitem{BloPig86}
W.~J. Blok and Don Pigozzi.
\newblock Protoalgebraic logics.
\newblock {\em Studia Logica}, 45(4):337--369, 1986.

\bibitem{BloReb03}
W.~J. Blok and J.~Rebagliato.
\newblock Algebraic semantics for deductive systems.
\newblock {\em Studia Logica}, 74:153--180, 2003.

\bibitem{BloPig92}
Willem~J Blok and Don Pigozzi.
\newblock Algebraic semantics for universal horn logic without equality.
\newblock {\em Universal algebra and quasigroup theory}, 19(1-56):111--112,
  1992.

\bibitem{BuiGor07}
Linda Buisman and Rajeev Gor{\'e}.
\newblock A cut-free sequent calculus for bi-intuitionistic logic.
\newblock In Nicola Olivetti, editor, {\em Automated Reasoning with Analytic
  Tableaux and Related Methods}, pages 90--106, Berlin, Heidelberg, 2007.
  Springer Berlin Heidelberg.

\bibitem{BurSan81}
S.~N. Burris and H.~P. Sankappanavar.
\newblock {\em A course in universal algebra}, volume~78 of {\em Graduate Texts
  in Mathematics}.
\newblock Springer, London, 1981.

\bibitem{CinNog13}
Petr Cintula and Carles Noguera.
\newblock The proof by cases property and its variants in structural
  consequence relations.
\newblock {\em Studia Logica}, 101(4):713--747, 2013.

\bibitem{CinNog21}
Petr Cintula and Carles Noguera.
\newblock {\em Logic and Implication: An Introduction to the General Algebraic
  Study of Non-Classical Logics}.
\newblock Springer Verlag, 2021.

\bibitem{Cro01}
T.~Crolard.
\newblock Subtractive logic.
\newblock {\em Theoretical Computer Science}, 254:151--185, 2001.

\bibitem{CzePig99}
J.~Czelakowski and D.~Pigozzi.
\newblock Amalgamation and interpolation in abstract algebraic logic.
\newblock In X.~Caicedo and C.~H. Montenegro, editors, {\em Models, Algebras,
  and Proofs}, Bogot{\'{a}}, 1999. Taylor and Francis.

\bibitem{Cze81}
Janusz Czelakowski.
\newblock Equivalential logics i,ii.
\newblock {\em Studia Logica}, 40(4):227--236,355--372, 1981.

\bibitem{GroPat19}
J.~de~Groot and D.~Pattinson.
\newblock Hennessy-milner properties for (modal) bi-intuitionistic logic.
\newblock In R.~Iemhoff, M.~Moortgat, and de~R.~Queiroz, editors, {\em Logic,
  Language, Information, and Computation}, pages 161--176, Berlin, 2019.
  Springer.

\bibitem{Fon16}
J.~Font.
\newblock {\em Abstract Algebraic Logic - An Introductory Textbook}, volume~60
  of {\em Studies in Logic}.
\newblock College Publications, London, 2016.

\bibitem{FonJanPig03}
Josep~Maria Font, Ramon Jansana, and Don Pigozzi.
\newblock A survey of abstract algebraic logic.
\newblock {\em Studia Logica}, 74:13--97, 2003.

\bibitem{GallReySag03}
A.~Galli, G.~Reyes, and M.~Sagastume.
\newblock Strong amalgamation, beck-chevalley for equivalence relations and
  interpolation in algebraic logic.
\newblock {\em Fuzzy Sets and Systems}, 138:3--23, 2003.

\bibitem{Ger14}
M.~Gehrke.
\newblock {\em {Canonical Extensions, Esakia Spaces, and Universal Models}},
  pages 9--41.
\newblock Springer Netherlands, Dordrecht, 2014.

\bibitem{GerHar01}
M.~Gehrke and J.~Harding.
\newblock {Bounded Lattice Expansions}.
\newblock {\em Journal of Algebra}, 238:345--371, 2001.

\bibitem{GerHarVen06}
M.~Gehrke, J.~Harding, and Y.~Venema.
\newblock {MacNeille Completions and Canonical Extensions}.
\newblock {\em Transactions of the American Mathematical Society},
  358:573--590, 2006.

\bibitem{GerVos09}
M.~Gehrke and J.~Vosmaer.
\newblock A view of canonical extension.
\newblock In N.~Bezhanishvili, S.~L{\"o}bner, K.~Schwabe, and L.~Spada,
  editors, {\em Logic, Language, and Computation}, pages 77--100, Berlin,
  Heidelberg, 2011. Springer Berlin Heidelberg.

\bibitem{Gli29}
Valery Glivenko.
\newblock Sur quelques points de la logique de m. brouwer.
\newblock {\em Bulletins de la classe des sciences}, 15(5):183--188, 1929.

\bibitem{GorShi20}
R.~Gor{\'{e}} and I.~Shillito.
\newblock Bi-intuitionistic logics: {A} new instance of an old problem.
\newblock In N.~Olivetti, R.~Verbrugge, S.~Negri, and G.~Sandu, editors, {\em
  13th Conference on Advances in Modal Logic}, pages 269--288, Helsinki, 2020.
  College Publications.

\bibitem{Gor00}
Rajeev Gor{\'e}.
\newblock Dual intuitionistic logic revisited.
\newblock In Roy Dyckhoff, editor, {\em Automated Reasoning with Analytic
  Tableaux and Related Methods}, pages 252--267, Berlin, Heidelberg, 2000.
  Springer Berlin Heidelberg.

\bibitem{GorPosTiu08}
Rajeev Gor{\'{e}}, Linda Postniece, and Alwen Tiu.
\newblock Cut-elimination and proof-search for bi-intuitionistic logic using
  nested sequents.
\newblock In Carlos Areces and Robert Goldblatt, editors, {\em Advances in
  Modal Logic 7, papers from the seventh conference on "Advances in Modal
  Logic," held in Nancy, France, 9-12 September 2008}, pages 43--66. College
  Publications, 2008.

\bibitem{HakNeg12}
R.~Hakli and S.~Negri.
\newblock Does the deduction theorem fail for modal logic?
\newblock {\em Synthese}, 187:849--867, 2012.

\bibitem{JonTar52}
B.~J{\'o}nnson and A.~Tarski.
\newblock Boolean algebras with operators.
\newblock {\em American Journal of Mathematics}, 74:127--162, 1952.

\bibitem{KirShi25}
Dominik Kirst and Ian Shillito.
\newblock Completeness of first-order bi-intuitionistic logic.
\newblock In J{\"{o}}rg Endrullis and Sylvain Schmitz, editors, {\em 33rd
  {EACSL} Annual Conference on Computer Science Logic, {CSL} 2025, February
  10-14, 2025, Amsterdam, Netherlands}, volume 326 of {\em LIPIcs}, pages
  40:1--40:19. Schloss Dagstuhl - Leibniz-Zentrum f{\"{u}}r Informatik, 2025.

\bibitem{Kle71}
D.~Klemke.
\newblock Ein henkin-beweis f{\"u}r die vollst{\"a}ndigkeit eines kalk{\"u}ls
  relativ zur grzegorczyk-semantik.
\newblock {\em Archiv f{\"u}r mathematische Logik und Grundlagenforschung},
  14:148--151, 1971.

\bibitem{KowMet19}
T.~Kowalski and G.~Metcalfe.
\newblock Uniform interpolation and coherence.
\newblock {\em Annals of Pure and Applied Logic}, 170:825--841, 2019.

\bibitem{KowOno17}
Tomasz Kowalski and Hiroakira Ono.
\newblock Analytic cut and interpolation for bi-intuitionistic logic.
\newblock {\em The Review of Symbolic Logic}, 10(2), 2017.

\bibitem{LavPre21}
Tom{\'{a}}{\v{s}} L{\'{a}}vi{\v{c}}ka and Adam P{\v{r}}enosil.
\newblock Semisimplicity, glivenko theorems, and the excluded middle, 2021.

\bibitem{LosSuz1958}
Jerzy Los and Roman Suszko.
\newblock Remarks on sentential logics.
\newblock {\em Indagationes mathematicae}, 20(177-183):5, 1958.

\bibitem{LyoTiuGorClou18}
T.~Lyon, A.~Tiu, R.~Gor\'{e}, and R.~Clouston.
\newblock Syntactic interpolation for tense logics and bi-intuitionistic logic
  via nested sequents.
\newblock In Maribel Fern{\'{a}}ndez and Anca Muscholl, editors, {\em 28th
  Annual Conference on Computer Science Logic}, pages 1--16, Barcelona, 2020.
  Schloss Dagstuhl -- Leibniz-Zentrum f{\"u}r Informatik.

\bibitem{LyoShiTiu25}
Tim~S. Lyon, Ian Shillito, and Alwen Tiu.
\newblock Taking bi-intuitionistic logic first-order: {A} proof-theoretic
  investigation via polytree sequents.
\newblock In J{\"{o}}rg Endrullis and Sylvain Schmitz, editors, {\em 33rd
  {EACSL} Annual Conference on Computer Science Logic, {CSL} 2025, February
  10-14, 2025, Amsterdam, Netherlands}, volume 326 of {\em LIPIcs}, pages
  41:1--41:23. Schloss Dagstuhl - Leibniz-Zentrum f{\"{u}}r Informatik, 2025.

\bibitem{Moi42}
C.~Moisil.
\newblock Logique modale.
\newblock {\em Disquisitiones malhematicae et physicae}, 2:3--98, 1942.

\bibitem{Mor22}
T.~Moraschini.
\newblock On equational completeness theorems.
\newblock {\em The Journal of Symbolic Logic}, 87(4):1522--–1575, 2022.

\bibitem{Mor24}
T.~Moraschini.
\newblock {\em A Gentle Introduction to the Leibniz Hierarchy}, volume~27 of
  {\em Outstanding Contributions to Logic}, pages 123--201.
\newblock Springer, Cham, 2024.

\bibitem{Now90}
Marek Nowak.
\newblock Logics preserving degrees of truth.
\newblock {\em Studia Logica}, 49(4):483--499, 1990.

\bibitem{PinUus09}
L.~Pinto and T.~Uustalu.
\newblock Proof search and counter-model construction for bi-intuitionistic
  propositional logic with labelled sequents.
\newblock In Giese Martin, editor, {\em Automated Reasoning with Analytic
  Tableaux and Related Methods}, pages 295--309, Oslo, 2009. Springer.

\bibitem{Pri55}
A.~N. Prior.
\newblock {\em Time and Modality}.
\newblock Greenwood Press, Westport, Connecticut, 1955.

\bibitem{Pri67}
A.~N. Prior.
\newblock {\em Past, Present and Future}.
\newblock Clarendon Press, Oxford, 1967.

\bibitem{Pri68}
A.~N. Prior.
\newblock {\em Papers on Time and Tense}.
\newblock Oxford University Press, Oxford, 1968.

\bibitem{Raf13}
J.~G. Raftery.
\newblock Inconsistency lemmas in algebraic logic.
\newblock {\em Mathematical Logic Quarterly}, 59:393--406, 2013.

\bibitem{Ras87}
H.~Rasiowa.
\newblock {\em Chapter II Implicative Algebras}, volume~78 of {\em Studies in
  Logic and the Foundations of Mathematics}, pages 15--37.
\newblock Elsevier, New York, 1974.

\bibitem{RasSik63}
H.~Rasiowa and R.~Sikorski.
\newblock {\em The Mathematics of Metamathematics}.
\newblock Panstwowe Wydawnictwo Naukowe, Warsaw, 1963.

\bibitem{Rau74calc}
C.~Rauszer.
\newblock A formalization of the propositional calculus of {H-B} logic.
\newblock {\em Studia Logica}, 33:23--34, 1974.

\bibitem{Rau74alg}
C.~Rauszer.
\newblock {Semi-Boolean Algebras and Their Applications to Intuitionistic Logic
  with Dual Operations}.
\newblock {\em Fundamenta Mathematicae}, 83:219--249, 1974.

\bibitem{Rau76}
C.~Rauszer.
\newblock On the strong semantical completeness of any extension of the
  intuitionistic predicate calculus.
\newblock {\em Studia Logica}, 33:81--87, 1976.

\bibitem{Rau77craig}
C.~Rauszer.
\newblock The {C}raig interpolation theorem for an extension of intuitionistic
  logic.
\newblock {\em Journal de l'Acad{\'{e}}mie Polonaise des Sciences},
  25:127--135, 1977.

\bibitem{Rau80}
C.~Rauszer.
\newblock {\em An algebraic and Kripke-style approach to a certain extension of
  intuitionistic logic}.
\newblock PhD thesis, Instytut Matematyczny Polskiej Akademi Nauk, 1980.

\bibitem{Shi23}
I.~Shillito.
\newblock {\em New Foundations for the Proof Theory of Bi-Intuitionistic and
  Provability Logics Mechanized in Coq}.
\newblock PhD thesis, Australian National University, 2023.

\bibitem{ShiKir24}
I.~Shillito and K.~Dominik.
\newblock A mechanised and constructive reverse analysis of soundness and
  completeness of bi-intuitionistic logic.
\newblock In S.~Ghosh and R.~Ramanujam, editors, {\em Proceedings of the 13th
  ACM SIGPLAN International Conference on Certified Programs and Proofs}, pages
  218--229, London, 2024. Association for Computing Machinery.

\bibitem{Tar30}
Alfred Tarski.
\newblock Fundamentale begriffe der methodologie der deduktiven wissenschaften.
  {I}.
\newblock {\em Monatshefte f{\"u}r Mathematik und Physik}, 37(1):361--404,
  1930.

\bibitem{Tay16}
C.~J. Taylor.
\newblock Discriminator varieties of double-heyting algebras.
\newblock {\em Reports on Mathematical Logic}, 51:3--14, 2016.

\bibitem{Coq820}
{The Coq Development Team}.
\newblock {The Coq Proof Assistant}, September 2024.

\bibitem{Tra17}
Luca Tranchini.
\newblock Natural deduction for bi-intuitionistic logic.
\newblock {\em Journal of Applied Logic}, 25:S72--S96, 2017.
\newblock Logical Investigations on Assertion and Denial.

\bibitem{vGoMetTsi17}
S.~{van Gool}, G.~Metcalfe, and C.~Tsinakis.
\newblock Uniform interpolation and compact congruences.
\newblock {\em Annals of Pure and Applied Logic}, 168:1927--1948, 2017.

\bibitem{Wan08}
Heinrich Wansing.
\newblock Constructive negation, implication, and co-implication.
\newblock {\em Journal of Applied Non-Classical Logics}, 18(2-3):341--364,
  2008.

\end{thebibliography}

\end{document}